\DeclareMathOperator{\diff}{diff}
\DeclareMathOperator{\Diag}{Diag}
\newtheorem{thm}{Theorem}
\newtheorem{lem}[thm]{Lemma}
\newtheorem{cor}[thm]{Corollary}
\newtheorem{prop}[thm]{Proposition}
\colorlet{mypurple}{purple!20}
\pgfplotsset{compat=1.18}
\title{Quantum and Simulated Annealing-Based Iterative Algorithms for QUBO Relaxations of the  Sparsest 
$k$-Subgraph Problem}
\author{
	{Omkar Bihani} \thanks{Rudolfovo, Science and Technology Centre, Novo mesto, Slovenia, {\tt omkar.bihani@rudolfovo.eu}}
\and
	{Roman Kužel} \thanks{Rudolfovo, Science and Technology Centre, Novo mesto, Slovenia, {\tt roman.kuzel@rudolfovo.eu}}
\and {Janez Povh}\thanks{Rudolfovo, Science and Technology Centre, Novo mesto, Slovenia, {\tt janez.povh@rudolfovo.eu}}
        \and  {Dunja Pucher}\thanks{Department of Mathematics,
		University of Klagenfurt, Austria,
		{\tt dunja.pucher@aau.at} }}
\date{\today}
\begin{document}

\maketitle


\begin{abstract} 
In this paper, we introduce three QUBO (Quadratic Unconstrained Binary Optimization) relaxations for the sparsest 
$k$-subgraph (SkS) problem: a quadratic penalty relaxation, a Lagrangian relaxation, and an augmented Lagrangian relaxation. The effectiveness of these approaches strongly depends on the choice of penalty parameters. We establish theoretical results characterizing the parameter values for which the QUBO relaxations are exact. For practical implementation, we propose three iterative algorithms, which have in their kernel the QUBO relaxations, that update the penalty parameters at each iteration while approximately solving the internal QUBO problems with simulated annealing and quantum processing units. Extensive numerical experiments validate our theoretical findings on exact relaxations and demonstrate the efficiency of the proposed iterative algorithms.    
\end{abstract}

{\small\noindent\textbf{Keywords:} The sparsest $k$-subgraph problem, QUBO relaxation,  quadratic penalty relaxation, Lagrangian relaxation, augmented Lagrangian relaxation}
	\\[1mm]
	{\small\noindent\textbf{Math. Subj. Class. (2020):} 90C27, 81P68}
	
\section{Introduction}

\subsection{Motivation and related work}

The sparsest $k$-subgraph problem (SkS) is a fundamental problem in combinatorial optimization. Given a simple and undirected graph $G = (V, E)$ on $\vert V \vert = n$ vertices, and a positive integer $k \leq n$, the goal is to identify a subset $S \subseteq V$ on exactly $k$ vertices such that the number of edges in the subgraph induced by $S$ is minimized. In its weighted variant, also known as the $k$-lightest subgraph problem, the aim is to minimize the total weight of the induced edges. 

The SkS problem generalizes the classical maximum stable set problem, where the induced subgraph is required to contain no edges at all. As a result, the SkS problem inherits the computational hardness of the maximum set problem and is NP-hard for general graphs; see, for instance,~\cite{watrigant}.

Besides its connection to the maximum stable set problem, the SkS problem is also related to the densest $k$-subgraph problem via complement graphs. It therefore offers a rich framework for investigating combinatorial structures and optimization techniques.

Several classical approaches have been developed for solving the sparsest or the densest $k$-subgraph problem. These methods include exact algorithms based on integer programming, heuristics such as greedy and local search techniques, and relaxation-based approaches using semidefinite programming. For a detailed overview of classical methods and their applications, we refer the reader to the recent survey~\cite{Lanciano2024}.

The SkS problem drew our attention for another reason: it can be naturally formulated as a Quadratic Unconstrained Binary Optimization (QUBO) problem with a single linear constraint. QUBO problems have recently received considerable attention because they can be efficiently solved, at least in theory, by using quantum annealers such as those provided by D-Wave systems. Our initial motivation, therefore, was to transform the single linear constraint into the objective function and evaluate how quantum annealers perform on the resulting  QUBO formulation.

 We investigate three approaches that result in three different QUBO  relaxations for the SkS problem. We first apply the most straightforward approach-the quadratic penalty approach. Inspired by the recent results in~\cite{Djidjev2023}, we also apply the augmented Lagrangian method to represent the cardinality constraint more effectively, and as the third approach, we utilize Lagrangian relaxation in a manner different from that demonstrated in~\cite{Ronah}. 

Quantum annealers such as D-Wave Quantum Processing Unit (QPU) are very sensitive to the magnitude and scaling of the coefficients in the objective function, since they normalize all the coefficients in the objective function to meet the device restrictions. Consequently, the inclusion of linear constraints into the objective function using a penalisation technique raises an important question: how should the penalty parameters be chosen to avoid degrading the performance of the solver?

 We show that the quadratic penalty approach influences all the coefficients of the original objective function, so a large penalty parameter decreases the value of the original coefficients after normalization,  which, combined with the finite precision of the digital-to-analog converter,  reduces the accuracy of quantum annealing; see, for instance,~\cite{Djidjev2023}. In contrast, if this parameter is too small, infeasible solutions may dominate, leading to incorrect results. Glover et al. ~\cite{Glover2019} claim that the penalty parameters are not unique, and considering a range of values often provides effective results.

The quantum annealing approach to the augmented Lagrangian method, where the standard penalty method is combined with the Lagrangian multipliers, was studied by Djidjev in  \cite{Djidjev2023_2, Djidjev2023}. In the case of SkS, we need to introduce two parameters: one for the linear and one for the quadratic term. The Djidjev approach employs an iterative procedure to determine optimal or near-optimal values for both penalty parameters. 
The augmented Lagrangian method, combined with quantum annealing, partially resolves the  challenges related to the reduced precision when considering large-scale problems. This method was initially developed for continuous variables, as exposed in~\cite{Djidjev2023_2}, and for optimization problems in discrete variables, there are no known convergence  results. Nevertheless, the experimental results in~\cite{Djidjev2023} for the set cover problem show that the augmented Lagrangian approach in combination with quantum annealing outperforms the standard penalty method,which introduces slack variables and quadratic penalty.

In the third approach, we omit the quadratic penalty term entirely and instead consider the Lagrangian relaxation of the SkS.
A detailed study of Lagrangian relaxation for continuous optimization problems can be found in~\cite{Ahuja1993},  its application to integer programming is explored in~\cite{Fisher2004}, while a method for solving the Lagrangian dual of a constrained binary quadratic programming problem using a quantum annealer is presented in~\cite{Ronah}.

The challenges with selecting the penalty parameters can be overcome by sequential penalty methods; see, for instance,~\cite{Garcia2022}. For combinatorial optimization problems,  which have a specific structure and characteristics, this method can yield more efficient and effective results. Several illustrative examples of such tailored approaches are discussed in~\cite{Quintero2020}.

The challenges with selecting the penalty parameters can be overcome by sequential penalty methods; see, for instance,~\cite{Garcia2022}. For combinatorial optimization problems,  which have a specific structure and characteristics, this method can yield more efficient and effective results. Several illustrative examples of such tailored approaches are discussed in~\cite{Quintero2020}.

Building on these insights, this paper incorporates all major approaches from the literature and extends them further. For each of the three penalty approaches to the SkS problem, it develops an iterative algorithm that adaptively determines appropriate penalty values, ensuring feasibility without significantly compromising the solver performance. 

To the best of our knowledge, this is the first study that applies different penalty-based approaches in combination with quantum annealers to the SkS problem. 
Prior work on related problems is limited. One study from 2020~\cite{Calude2020} applied quantum annealing to the heaviest $k$-subgraph problem, the weighted variant of the densest $k$-subgraph problem, using random bipartite graphs with 30 vertices. Another related work is a patented method from 2024~\cite{Ronah}, which presents computational results for a toy example of the heaviest $k$-subgraph problem by solving the Lagrangian dual of a constrained binary quadratic programming problem using a quantum annealer. However, a comprehensive computational study on different instances is not provided.

\subsection{Our contribution and outline}

This article presents a comprehensive investigation of three  Quadratic Unconstrained Binary Optimization (QUBO) relaxations for the sparsest $k$-subgraph (SkS) problem and algorithms to solve them. Our main contributions are as follows:

\begin{itemize}
   \item We provide a detailed theoretical framework for three distinct QUBO relaxations of the SkS problem: the  Quadratic Penalty (QP), Lagrangian Relaxation (LR), and Augmented Lagrangian (AL) relaxations. For each, we establish conditions and derive bounds for their respective parameters (quadratic penalty parameter $\mu$ and Lagrange parameter $\lambda$) that guarantee the exactness, i.e., the optimal solutions of the QUBO relaxations are optimal solutions of the SkS problem. 

    \item We delve into the nuances of Lagrangian Relaxation for the SkS problem, particularly addressing the challenge of selecting an appropriate Lagrange multiplier $\lambda$.
 We demonstrate that for specific graph properties, such as when the sequence $\{\diff_{\ell}\}$
     (representing the difference in minimum edges between subgraphs of consecutive sizes) is monotonically increasing, exact solutions can be obtained with a precisely determined $\lambda$.
     \item We provide extensive numerical results to demonstrate the exactness of the three relaxations using the exact QUBO solver BiqBin \cite{gusmeroli2022biqbin}.  
     
     \item We develop and implement three iterative algorithms -- Quadratic Penalty Iterative Algorithm (QPIA), Lagrangian Relaxation Iterative Algorithm (LRIA), and Augmented Lagrangian Iterative Algorithm (ALIA), designed to address the computational challenges of solving SkS for larger instances. These algorithms dynamically adjust quadratic penalty and Lagrangian parameters and use the D-Wave quantum annealing and  Simulated Annealing (SA) solvers in each iteration.

     \item Through extensive numerical experiments on various graph datasets, including Erd\H{o}s--R\'enyi (ER) graphs, Bipartite graphs, and D-Wave topology graphs, we empirically validate the effectiveness of our proposed methods.
     
     \end{itemize}

 \subsection{Terminology and notation}

We close this section with some terminology and notation. The vector of all-ones of length $n$ is denoted by $e_n$.
If $n$ is clear from the context, we write shortly $e$.
Furthermore, if $a \in \mathbb{R}^n$, then $\Diag(a)$ denotes a $n \times n$ diagonal matrix with $a$ on the main diagonal.

Throughout this paper, $G = (V(G), E(G))$ denotes a simple undirected graph with $\vert V(G) \vert = n$ vertices and $\vert E(G) \vert = m$ edges. Without loss of generality, we assume that the set of vertices is $V(G) = \{1, \ldots, n\}$. If a graph $G$ is clear from the context, we write shortly $G = (V, E)$. The complement of a graph $G = (V, E)$ is the graph $\overline{G} = (V, \overline{E})$, where $\overline{E}$ is the set of non-edges in $G$, that is $\overline{E} = \{ \{i, j\} \in V\; \vert \;  \{i,j\} \notin E, ~i \neq j\}$. A graph $G' = (V', E') $ is a subgraph of $G$ if $V' \subseteq V$ and $E' \subseteq E$. Let $V' \subseteq V$. We say that the subgraph $G' = ( V', E')$ of $G$ is induced by $V'$ if $E'$ consists of all edges $\{i,j\}$ from $E$  with $i,j \in V'$.  
For a vertex $i\in V(G)$, we denote its degree by $d_G(i)$. The number~$\Delta(G) = \max \{d_G(i) \colon i \in V(G) \}$ is the maximum degree of $G$.

\section{Constrained and unconstrained quadratic binary optimization problems}\label{QUBO_theory}

The sparsest $k$-subgraph problem, which is formally defined in the next section, is an example of a quadratic binary optimization problem with linear constraints. Such problems can be relaxed to quadratic unconstrained binary optimization problems using various penalization techniques.
In this section, we recall definitions and the most important results related to these relaxations, as well as some associated challenges.

Quadratic unconstrained binary optimization (QUBO) is a class of optimization problems formally defined as:
\begin{align}\label{general_QUBO}
\tag{\mbox{QUBO}}
\min \left \{x^T Q x + c^T x \mid x \in \{0, 1\}^n \right \},
\end{align}
where $Q \in \mathbb{R}^{n \times n}$ is a symmetric matrix and $c \in \mathbb{R}^n$. Since $x_i^2 = x_i$ for all $i \in \{1, \dots, n\}$, the expression $x^T Q x + c^T x$ can be rewritten as $x^T (Q + \Diag(c)) x$.
Therefore, without loss of generality, a QUBO problem can be expressed with a purely quadratic objective function, omitting the explicit linear term.

In general, the QUBO problem is NP-hard, due to its equivalence to the Max-Cut problem, for which it is proven that its decision version is an NP-complete problem - it is on the list of NP-complete problems from   \cite{garey1979computers}.

It is well-known (see, e.g.~\cite{Papadimitriou1982}) that the QUBO problem  can naturally represent a wide range of integer and combinatorial optimization problems. We now illustrate how a binary optimization problem with a quadratic objective function and linear constraints can be relaxed to QUBO problem. To this end, we adapt the framework from~\cite{Quintero2020}, which presents a QUBO relaxations for binary optimization problems with linear objective functions and linear constraints.

Consider linearly constrained quadratic binary  optimization problem:
\begin{align}\label{optimization_problem}
\min \left\{ x^T C x \mid Ax = b, \, x \in \{0, 1\}^n \right\},
\end{align}
where $C \in \mathbb{R}^{n \times n}$ is a symmetric matrix, $b \in \mathbb{R}^m$, and $A \in \mathbb{R}^{m \times n}$. We can associate to  this problem  a QUBO problem, obtained by moving the linear equality constraints $Ax = b$  into the objective using a quadratic penalty function $\frac{\mu}{2} \left \| Ax - b \right \|^2$, where $\mu > 0$ is a penalty parameter:
\begin{align}\label{reformulated_problem}
\min \left\{ x^T C x + \frac{\mu}{2} \left \| Ax - b \right \|^2 \mid x \in \{0, 1\}^n \right\}.
\end{align}

 The reformulated problem~\eqref{reformulated_problem} is a relaxation of the original problem ~\eqref{optimization_problem}, since every solution of  ~\eqref{optimization_problem} is, for obvious reasons, also a solution of ~\eqref{reformulated_problem} with the same objective value, while the opposite is not necessarily true. We say that the QUBO relaxation is an exact (also tight) relaxation if an optimal solution of ~\eqref{reformulated_problem} is also an optimal solution of the original problem ~\eqref{optimization_problem}.

A commonly used approach to ensure that the relaxation is exact involves setting the penalty term to exceed the largest possible objective value. Thus, as noted in~\cite{Quintero2020}, one possible choice for the formulation~\eqref{reformulated_problem} is to set the penalty term as:
\begin{align}\label{penalty_exact_method}
\frac{\mu}{2} > \frac{e^T C e}{\min \left\{ \left \| Ax - b \right \|^2 \mid Ax \neq b, ~x \in \{0, 1\}^n \right\}}.
\end{align}

While this condition is intuitively reasonable, its validity was not proven in~\cite{Quintero2020}. We therefore formalize and prove this result in the following proposition.\\

\begin{prop}\label{penalty_general_method}
Let $C \in \mathbb{R}^{n \times n}$ be symmetric and non-negative, and suppose $A \in \mathbb{R}^{m \times n}$ and $b \in \mathbb{R}^m$. Furthermore, let $\mu$ be as in~\eqref{penalty_exact_method}. Then any optimal solution $x^*$ of QUBO relaxation ~\eqref{reformulated_problem} is also an optimal solution of the original problem~\eqref{optimization_problem}.
\end{prop}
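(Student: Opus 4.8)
The plan is to argue by contradiction, exploiting the non‑negativity of $C$ to get a lower bound on the objective of any infeasible point and an upper bound on the objective of any feasible point. Concretely, suppose $x^*$ is an optimal solution of~\eqref{reformulated_problem} but $Ax^* \neq b$. Since the feasible set of~\eqref{optimization_problem} is assumed non‑empty (otherwise there is nothing to prove), pick any feasible $\bar x$ with $A\bar x = b$; its value in~\eqref{reformulated_problem} is just $\bar x^T C \bar x$, and since $C \geq 0$ entrywise and $\bar x \in \{0,1\}^n$, we have $0 \le \bar x^T C \bar x \le e^T C e$. Hence the optimal value of~\eqref{reformulated_problem} is at most $e^T C e$.

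On the other hand, for the infeasible point $x^*$ we have $\|Ax^* - b\|^2 \ge \delta := \min\{\|Ax - b\|^2 \mid Ax \neq b,\ x \in \{0,1\}^n\} > 0$, so its objective value is
\begin{align*}
(x^*)^T C x^* + \frac{\mu}{2}\|Ax^* - b\|^2 \;\ge\; 0 + \frac{\mu}{2}\,\delta \;>\; e^T C e,
\end{align*}
where the first inequality uses $C \ge 0$ again and the last one is exactly the choice of $\mu$ in~\eqref{penalty_exact_method}. This contradicts optimality of $x^*$, since $\bar x$ achieves a strictly smaller value. Therefore every optimal $x^*$ of~\eqref{reformulated_problem} satisfies $Ax^* = b$, i.e.\ it is feasible for~\eqref{optimization_problem}.

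It then remains to check optimality of $x^*$ within the feasible set. For any feasible $x$, the penalty term vanishes, so the objective of~\eqref{reformulated_problem} restricted to feasible points coincides with $x^T C x$, the objective of~\eqref{optimization_problem}; since $x^*$ minimizes the former over all of $\{0,1\}^n$ and is itself feasible, it minimizes $x^T C x$ over the feasible set as well. This completes the argument.

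I expect the only real subtlety to be the implicit non‑emptiness of the feasible region and the fact that the minimum defining $\delta$ is attained and strictly positive — the latter is automatic because $\{0,1\}^n$ is finite and the constraint $Ax \neq b$ forces $\|Ax-b\|^2 > 0$ on that set; I would state this explicitly. The non‑negativity hypothesis on $C$ is used twice, once to upper‑bound $\bar x^T C \bar x$ by $e^T C e$ and once to discard the term $(x^*)^T C x^* \ge 0$; both uses should be flagged so the reader sees where the assumption is essential.
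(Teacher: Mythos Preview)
Your proof is correct and follows essentially the same route as the paper's own argument: contradiction via non-negativity of $C$ to bound $\bar x^T C \bar x \le e^T C e$ from above and $(x^*)^T C x^* \ge 0$ from below, then compare with the penalty lower bound from~\eqref{penalty_exact_method}. Your explicit remarks about non-emptiness of the feasible set and about $\delta$ being attained and strictly positive are useful clarifications that the paper leaves implicit.
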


\begin{proof}
Let $x^*$ be an optimal solution of~\eqref{reformulated_problem} and suppose that $Ax^* \neq b$. Then, by the assumption on $\mu$, we have that
\begin{align*}
\frac{\mu}{2} \|A x^* - b \|^2 > e^T C e.
\end{align*}
Consequently, the objective value of the~\eqref{reformulated_problem} at $x^*$ satisfies
\begin{align*}
x^{*T} C x^* + \frac{\mu}{2} \|A x^* - b \|^2 > x^{*T} C x^* + e^T C e.
\end{align*}

Now let $\tilde{x}$ be any solution of~\eqref{reformulated_problem}, such that $A \tilde{x} = b$. Then the penalty term vanishes, and the objective function of ~\eqref{reformulated_problem} reduces to $\tilde{x} ^T C\tilde{x}$. Moreover, since $C$ is non-negative and $\tilde{x} \in \{0,1\}^n$, it holds that $\tilde{x}^T C \tilde{x} \leq e^T C e$. Combining this with we the inequality above, we obtain
\begin{align*}
x^{*T} C x^* + \frac{\mu}{2} \|A x^* - b \|^2 > \tilde{x}^T C \tilde{x},
\end{align*}
which contradicts the optimality of $x^*$ for ~\eqref{reformulated_problem}.

Hence, $x^*$ must be feasible for the constrained problem~\eqref{optimization_problem}. For every feasible solution, the penalty term is zero, so the  objective function of ~\eqref{reformulated_problem} 
reduces to $x^T C x$. Therefore,
\begin{align*}
x^* \in \arg\min \left\{ x^T C x \mid Ax = b,\, x \in \{0,1\}^n \right\},
\end{align*}
which means that $x^*$ is optimum for ~\eqref{optimization_problem}.
\end{proof}

Choosing the quadratic penalty parameter $\mu$  when working with quantum annealers is not a trivial task. A detailed discussion of this issue can be found in~\cite{Ayodele2022}. For instance, the authors of \cite{Verma2022} report that if $\mu$ is too large, it can dominate the original objective function, making it challenging to obtain feasible solutions.

On the hardware such as D-Wave Quantum Processing Unit (QPU), all coefficients are normalized to meet device restrictions. A large $\mu$ results in some coefficients from the original objective function becoming very small in absolute value, which reduces the accuracy of quantum annealing~\cite{Djidjev2023}. In contrast, if $\mu$ is too small, infeasible solutions may dominate, leading to incorrect results. In practice, there is not a unique candidate for $\mu$,  and a range of values often provides effective results~\cite{Glover2019}.

The augmented Lagrangian approach is a well-known approach from the mathematical optimization area \cite{Bertsekas} which combines the quadratic penalty method with Lagrangian multipliers. Specifically, this method adds a linear Lagrangian term to the quadratic penalty:
\begin{align*}
\min \left\{ x^T C x + \lambda^T (Ax - b) + \frac{\mu}{2} \left \| Ax - b \right \|^2 \mid x \in \{0, 1\}^n \right\},
\end{align*}
and employs an iterative procedure to determine optimal or near-optimal values for $\lambda$ and $\mu$. 

When combined with quantum annealing, the augmented Lagrangian method aims to address challenges related to the reduced precision in quantum annealers when handling large-scale problems. However, as noted in~\cite{Djidjev2023_2}, this method was initially developed for continuous variables. In that setting, and under mild conditions, it converges to a global optimum regardless of the initial values of the used parameters; see, for instance,~\cite{Bertsekas, Fernandez2012}. For discrete variables, however, there are no theoretical results that guarantee optimality.

A third approach is to omit the quadratic penalty term entirely and instead consider the Lagrangian relaxation of the problem~\eqref{optimization_problem}, defined as:
\begin{align*}
\min \left\{ x^T C x + \lambda^T (Ax - b)  \mid x \in \{0, 1\}^n \right\}.
\end{align*}
As in the augmented Lagrangian method, the Lagrangian multiplier $\lambda$ is determined in an iterative procedure that yields optimal or near-optimal values for $\lambda$. A detailed study of Lagrangian relaxation for continuous optimization problems can be found in~\cite{Ahuja1993}, while its application to integer programming is explored in~\cite{Fisher2004}. Additionally, a method for solving the Lagrangian dual of a constrained binary quadratic programming problem using a quantum annealer is presented in~\cite{Ronah}.

\section{The sparsest $k$-subgraph problem}\label{definitions}

The SkS problem is a combinatorial optimization problem where the objective is to find a subgraph on $k$ vertices of a given graph that contains the minimum number of edges. For a given graph $G = (V, E)$ with vertex set $V = \{1, \ldots, n\}$ and a positive integer $k$, we can model subgraphs of $G$ by binary incidence vectors $x\in\{0,1\}^n$ with $x_i=1$ if and only if the vertex $i$ is included in the subgraph. 

The condition that the size of the subgraph defined by a binary vector $x$ must be equal to $k$ can be modeled by the constraint $\sum_{i \in V} x_i=k$. The number of edges in the subgraph defined by $x$ can be expressed as $\sum_{\{i,j\} \in E} x_i x_j$. Therefore, the SkS problem can be formulated as the following quadratic binary optimization problem with one linear constraint:
\begin{align}\label{SkS_problem_standard_formulation_0}
     m_k:=\min \left\{ \sum_{\{i,j\} \in E} x_i x_j \mid e^T x = k, ~x \in \{0, 1\}^n \right\}.
\end{align}

Now, let $A$ denote the adjacency matrix of $G$. Then, $\sum_{\{i,j\} \in E} x_i x_j = \frac{1}{2}x^T A x$, so the SkS problem can be equivalently written as:
\begin{align}\label{SkS_problem_standard_formulation}
\tag{\mbox{SkS}}
m_k= \min \left\{ \frac{1}{2}x^T A x \mid e^T x = k, 
~x \in \{0, 1\}^n \right\}.
\end{align}

Any optimum solution of ~\eqref{SkS_problem_standard_formulation} is called the sparsest $k$-subgraph of the given graph.

The SkS problem is closely related to the densest $k$-subgraph (DkS) problem, where the objective is to find a subgraph on $k$ vertices that contains the maximum number of edges. 
The relationship between the SkS and DkS problems is rather straightforward: a sparsest $k$-subgraph in $G$ is a densest $k$-subgraph in $\overline{G}$, and vice versa. This equivalence arises because minimizing the number of subgraph edges across all subgraphs of $G$ inherently maximizes the number of subgraph edges across all subgraphs in $\overline{G}$, where the edges correspond to those absent in $G$. 

Another problem related to the SkS problem is the maximum stable set problem. Given a graph $G$ that follows the general assumptions underlying this paper, a subset of vertices $S \subseteq V$ is called a stable set if the subgraph induced by $S$ has no edge. A maximum stable set is a stable set of the largest cardinality. The cardinality of the maximum stable set in $G$ is called the stability number of $G$ and is denoted by $\alpha(G)$. 

The relationship between the SkS problem and the maximum stable set problem can be summarized as follows. For a given graph $G$ on $n$ vertices and a positive integer $k \leq n$, let $x^*$ be an optimal solution of the problem~\eqref{SkS_problem_standard_formulation}. If $\frac{1}{2}x^{*T} A x^* = 0$, then $x^*$ is the incidence vector of a stable set in $G$. Therefore, finding the largest $k$ for which the objective function in~\eqref{SkS_problem_standard_formulation} equals zero yields the stability number $\alpha(G)$ of the given graph $G$. Note that this relationship was also observed in~\cite{PuchRen2023}, where semidefinite programming relaxations of the problem~\eqref{SkS_problem_standard_formulation}, as well as its connection to semidefinite programming relaxations for the maximum stable set problem, were investigated.

We assume that solving QUBO reformulations is more convenient than solving the original problem, in part because such reformulations can also be addressed using general exact solvers for QUBO problems, quantum annealers, and a wide range of meta-heuristic algorithms. Therefore, the central question we investigate for each QUBO reformulation is whether an optimal solution of a QUBO reformulation is also an optimal solution of the original problem ~\eqref{SkS_problem_standard_formulation}.

In the rest of this section,  we present several QUBO formulations and relaxations for the ~\eqref{SkS_problem_standard_formulation} problem and discuss their properties and effectiveness. 
We begin with a formulation based on the  quadratic penalty method, and we explore different strategies for selecting the penalty parameter.
Next, we discuss Lagrangian relaxation of the ~\eqref{SkS_problem_standard_formulation} problem. Instead of incorporating a quadratic penalty term, this approach employs a linear term containing the cardinality constraint. 
Finally, we combine Lagrangian relaxation with the quadratic penalty method and consider the augmented Lagrangian approach for ~\eqref{SkS_problem_standard_formulation}. We show that for any graph, it is possible to choose the Lagrangian multiplier and quadratic penalty term in a way that guarantees an optimal solution. 

\subsection{Quadratic penalty relaxation  of \ref{SkS_problem_standard_formulation}}\label{section_SkS_QUBO_exact_penalty}

The first QUBO relaxation of ~\eqref{SkS_problem_standard_formulation} is based on the standard quadratic penalty method presented in Section~\ref{QUBO_theory}, where we penalize the cardinality constraint $e^Tx = k$ by moving it as a quadratic term into the objective function, resulting in the following QUBO formulation
\begin{align}\label{SkS_QUBO_formulation_one}
\tag{\mbox{SkS-Q}}
     \min \left\{\frac{1}{2}x^TAx + \frac{\mu}{2}(e^Tx - k)^2 \mid x \in \{0, 1\}^n \right\},
\end{align}
where $\mu \in \mathbb{R}^+$ is a  penalization parameter.

Note that the term $\frac{\mu}{2}(e^Tx - k)^2$ in~\eqref{SkS_QUBO_formulation_one} penalizes all cases where $e^Tx \neq k$, supporting that exactly $k$ vertices are selected in an optimal solution of ~\eqref{SkS_QUBO_formulation_one}. 
We first prove a general lemma, connecting optimal solutions of ~\eqref{SkS_QUBO_formulation_one} with optimal solutions of ~\eqref{SkS_problem_standard_formulation}.\\

\begin{lem}
\label{lem:optimality_test_QUBO}
Let $G$ be a graph on $n$ vertices and $k \leq n$. Furthermore, let $x^* \in \{0,1\}^n$ be an optimal solution of~\eqref{SkS_QUBO_formulation_one} for some $\mu > 0$. If $e^T x^* = k$, then $x^*$ is also an optimal solution of~\eqref{SkS_problem_standard_formulation}.
\end{lem}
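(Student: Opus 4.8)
The plan is to argue by contradiction in the spirit of Proposition~\ref{penalty_general_method}, but exploiting the special structure of~\eqref{SkS_QUBO_formulation_one}. Assume $x^*$ is optimal for~\eqref{SkS_QUBO_formulation_one} with $e^T x^* = k$, and suppose it is \emph{not} optimal for~\eqref{SkS_problem_standard_formulation}. Then there exists a feasible $\tilde x \in \{0,1\}^n$ with $e^T \tilde x = k$ and $\tfrac12 \tilde x^T A \tilde x < \tfrac12 x^{*T} A x^*$. The key observation is that for any such feasible point the penalty term $\tfrac{\mu}{2}(e^T x - k)^2$ vanishes, so the objective of~\eqref{SkS_QUBO_formulation_one} at $\tilde x$ equals $\tfrac12 \tilde x^T A \tilde x$, and likewise at $x^*$ it equals $\tfrac12 x^{*T} A x^*$. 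Hence $\tilde x$ would give a strictly smaller value of the~\eqref{SkS_QUBO_formulation_one} objective than $x^*$, contradicting optimality of $x^*$ for~\eqref{SkS_QUBO_formulation_one}.

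First I would record the elementary fact that feasibility of $x$ for~\eqref{SkS_problem_standard_formulation} is exactly the condition $e^T x = k$, and that on the feasible set the two objective functions (that of~\eqref{SkS_QUBO_formulation_one} and that of~\eqref{SkS_problem_standard_formulation}) coincide because the penalty term is zero. Second, I would note that~\eqref{SkS_QUBO_formulation_one} is a relaxation of~\eqref{SkS_problem_standard_formulation}: every feasible point of the latter is a feasible point of the former with the same objective value, so the optimal value of~\eqref{SkS_QUBO_formulation_one} is at most $m_k$. Third, combining these with the hypothesis $e^T x^* = k$ shows that the~\eqref{SkS_QUBO_formulation_one} value at $x^*$ equals $\tfrac12 x^{*T} A x^*$, which is $\ge m_k$ because $x^*$ is then a feasible point of~\eqref{SkS_problem_standard_formulation}; but it is also $\le m_k$ since $x^*$ is optimal for the relaxation. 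Therefore $\tfrac12 x^{*T} A x^* = m_k$, i.e.\ $x^*$ attains the optimal value of~\eqref{SkS_problem_standard_formulation} while being feasible for it, hence is optimal for~\eqref{SkS_problem_standard_formulation}.

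This argument is essentially a short chain of inequalities and requires no hypothesis on $\mu$ beyond $\mu > 0$ and no nonnegativity assumption on $A$ (which holds automatically here, since $A$ is an adjacency matrix). I do not anticipate a genuine obstacle: the only thing to be careful about is making the comparison of the two optimal values precise, namely that $\mathrm{opt}\eqref{SkS_QUBO_formulation_one} \le m_k$ from the relaxation direction and that $x^*$ with $e^T x^* = k$ is an admissible competitor in~\eqref{SkS_problem_standard_formulation}, so $\tfrac12 x^{*T} A x^* \ge m_k$. Pinning down that these two inequalities force equality, together with feasibility of $x^*$, completes the proof.
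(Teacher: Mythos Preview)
Your proposal is correct and follows essentially the same approach as the paper: both arguments hinge on the observation that the penalty term vanishes at any $x$ with $e^T x = k$, so a strictly better feasible point for~\eqref{SkS_problem_standard_formulation} would immediately contradict optimality of $x^*$ for~\eqref{SkS_QUBO_formulation_one}. Your alternative phrasing via the sandwich $m_k \le \tfrac{1}{2}x^{*T}Ax^* = \mathrm{opt}\eqref{SkS_QUBO_formulation_one} \le m_k$ is an equivalent repackaging of the same idea.
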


\begin{proof}
Since $x^*$ is optimal for~\eqref{SkS_QUBO_formulation_one}, we have
\begin{align*}
\frac{1}{2} x^{*T} A x^* + \frac{\mu}{2} (e^T x^* - k)^2 \leq \frac{1}{2} x^T A x + \frac{\mu}{2} (e^T x - k)^2
\end{align*}
for all $x \in \{0,1\}^n$. Suppose that $x^*$ satisfies $e^T x^* = k$. Then the penalty term vanishes, and therefore
\begin{align*}
\frac{1}{2} x^{*T} A x^* \leq \frac{1}{2} x^T A x + \frac{\mu}{2} (e^T x - k)^2
\end{align*}
for all $x \in \{0,1\}^n$. In particular, this inequality holds for all $x$ satisfying $e^T x = k$, where the penalty term also vanishes. That is,
\begin{align*}
\frac{1}{2} x^{*T} A x^* \leq \frac{1}{2} x^T A x 
\end{align*}
for all $x \in \{0,1\}^n$ with $e^T x = k$.

Furthermore, given that $e^T x^* = k$, $x^*$ is feasible for~\eqref{SkS_problem_standard_formulation}. Now assume that $x^*$ is not optimal for~\eqref{SkS_problem_standard_formulation}. Then there exists a solution $\hat{x} \in \{0,1\}^n$ with $e^T \hat{x} = k$ such that
\begin{align*}
\frac{1}{2} \hat{x}^T A \hat{x} < \frac{1}{2} x^{*T} A x^*.
\end{align*}
But this contradicts the inequality above, which states that $x^*$ minimizes $\frac{1}{2} x^T A x$ among all binary vectors of cardinality $k$. Therefore, $x^*$ is an optimal solution of~\eqref{SkS_problem_standard_formulation}.
\end{proof}

The lemma above thus guarantees that optimal solutions of~\eqref{SkS_QUBO_formulation_one}, which are feasible with respect to the original problem~\eqref{SkS_problem_standard_formulation}, are also optimal solutions to the original problem. This leads to the following question: how can we ensure, through an appropriate choice of the penalty parameter $\mu$, that the optimal solutions of~\eqref{SkS_QUBO_formulation_one} are indeed feasible---and therefore optimal---for the original problem~\eqref{SkS_QUBO_formulation_one}?

Given that the matrix $A$ in the formulation~\eqref{SkS_QUBO_formulation_one} is non-negative, we can use inequality~\eqref{penalty_exact_method} and get our first lower bound for $\frac{\mu}{2}$: 
\begin{align*}
    \frac{\mu}{2} > \frac{\frac{1}{2}e^T A e}{\min \left\{ \left \| e^Tx - k \right \|^2 \mid e^Tx \neq k, ~x \in \{0, 1\}^n \right\}} = \frac{m}{1} = m,
\end{align*}
where $m$ is the number of edges in the underlying graph $G$.

In Section \ref{QUBO_theory}, we expose challenges related to too large or too small values of $\mu$. The lower bound $m$ may already be too large if the graph is dense; therefore, in the following, we propose an alternative approach that results in a more appropriate penalty parameter value.

\begin{lem}\label{penalty_feasible_solution} 
Let $G$ be a graph on $n$ vertices and $k \leq n$. Let $\tilde{x} \in \{0, 1\}^n$ such that $e^T\tilde{x} = k$, and let $\frac{\mu}{2} > \frac{1}{2} \tilde{x}^TA\tilde{x}$. Then,
\begin{itemize}
    \item [(i)]
\begin{align*}
\frac{1}{2} \tilde{x}^TA\tilde{x} < \frac{1}{2} x^T A x + \frac{\mu}{2}(e^Tx - k)^2
\end{align*}
for all $x \in \{0, 1\}^n$ with $e^Tx \neq k$.
\item[(ii)] Any  optimal solution of~\eqref{SkS_QUBO_formulation_one} is an optimal solution of ~\eqref{SkS_problem_standard_formulation}.
\end{itemize}
\end{lem}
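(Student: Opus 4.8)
The plan is to prove part (i) by a direct estimate, and then derive part (ii) as an easy consequence combining (i) with Lemma~\ref{lem:optimality_test_QUBO}. For part (i), fix any $x \in \{0,1\}^n$ with $e^T x \neq k$. Since $e^T x$ and $k$ are both integers, $(e^T x - k)^2 \geq 1$, so the penalty term satisfies $\frac{\mu}{2}(e^T x - k)^2 \geq \frac{\mu}{2} > \frac{1}{2}\tilde{x}^T A \tilde{x}$ by hypothesis. Because $A$ is the adjacency matrix of a graph, it is entrywise non-negative, hence $\frac{1}{2} x^T A x \geq 0$ for every $x \in \{0,1\}^n$. Adding these two inequalities gives $\frac{1}{2} x^T A x + \frac{\mu}{2}(e^T x - k)^2 > \frac{1}{2}\tilde{x}^T A \tilde{x}$, which is exactly the claim of (i).

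For part (ii), let $x^*$ be an optimal solution of~\eqref{SkS_QUBO_formulation_one}. The goal is to show $e^T x^* = k$, after which Lemma~\ref{lem:optimality_test_QUBO} immediately yields that $x^*$ is optimal for~\eqref{SkS_problem_standard_formulation}. Suppose, for contradiction, that $e^T x^* \neq k$. Observe first that the feasible set of~\eqref{SkS_problem_standard_formulation} is non-empty (any $\tilde{x}$ with $e^T\tilde{x}=k$ exists since $k \leq n$), so $\tilde{x}$ is a valid candidate point for~\eqref{SkS_QUBO_formulation_one}, at which the objective equals $\frac{1}{2}\tilde{x}^T A \tilde{x}$ because its penalty term vanishes. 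By part (i) applied to $x = x^*$, the objective value of~\eqref{SkS_QUBO_formulation_one} at $x^*$ strictly exceeds $\frac{1}{2}\tilde{x}^T A \tilde{x}$, contradicting the optimality of $x^*$. Hence $e^T x^* = k$, and the conclusion follows from Lemma~\ref{lem:optimality_test_QUBO}.

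I do not expect any real obstacle here: the argument is entirely elementary, relying only on the integrality of $e^T x - k$, the non-negativity of $A$, and the already-established Lemma~\ref{lem:optimality_test_QUBO}. The one point worth stating carefully is that $\tilde{x}$ serves a double role — it both supplies the threshold $\frac{1}{2}\tilde{x}^T A \tilde{x}$ in the hypothesis and acts as the comparison feasible point that contradicts optimality of an infeasible $x^*$ — so the writeup should make explicit that such a $\tilde{x}$ is feasible for~\eqref{SkS_problem_standard_formulation} and therefore admissible in~\eqref{SkS_QUBO_formulation_one}.
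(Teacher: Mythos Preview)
Your proposal is correct and follows essentially the same approach as the paper: part (i) is proved via the same chain (integrality of $e^Tx-k$ gives $(e^Tx-k)^2\ge 1$, then $\frac{\mu}{2}>\frac{1}{2}\tilde x^TA\tilde x$ and $\frac{1}{2}x^TAx\ge 0$), and part (ii) proceeds by the same contradiction using $\tilde x$ as the comparison point. The only cosmetic difference is that you invoke Lemma~\ref{lem:optimality_test_QUBO} to conclude optimality once $e^Tx^*=k$ is established, whereas the paper repeats that short argument inline.
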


\begin{proof}
To prove {\it (i)}, suppose  $x \in \{0, 1\}^n$ with $e^Tx \neq k$. Then, $(e^Tx - k)^2 \geq 1$, so
\begin{align*}
\frac{1}{2} x^T A x + \frac{\mu}{2}(e^Tx - k)^2 \geq \frac{1}{2} x^T A x + \frac{\mu}{2} > \frac{1}{2} x^T A x + \frac{1}{2} \tilde{x}^TA\tilde{x} \geq \frac{1}{2} \tilde{x}^TA\tilde{x}.
\end{align*}

For {\it (ii)}, let $x$ be an optimal solution of ~\eqref{SkS_QUBO_formulation_one}.
If  $e^Tx\neq k$, then {(\it i)} implies that 
\begin{align*}
\frac{1}{2} x^T A x + \frac{\mu}{2}(e^Tx - k)^2 > \frac{1}{2} \tilde{x}^TA\tilde{x}+
\frac{\mu}{2}(e^T \tilde x - k)^2,
\end{align*}
hence, $x$ can not be an optimum for 
\eqref{SkS_QUBO_formulation_one}.
Therefore, $x$ is feasible for ~\eqref{SkS_problem_standard_formulation}. If it is not optimum for  ~\eqref{SkS_problem_standard_formulation}, then there exists $\hat x$ such that $e^T\hat x=k$ and 
$$\frac{1}{2} x^T A x > \frac{1}{2} \hat x^T A \hat x.$$ 
But in this case, we would have 
\begin{align*}
\frac{1}{2} x^T A x + \frac{\mu}{2}(e^Tx - k)^2 > 
\frac{1}{2} \hat x^T A \hat x + \frac{\mu}{2}(e^T\hat x - k)^2,
\end{align*}
which implies that  $x$ is not an optimum for   ~\eqref{SkS_QUBO_formulation_one}, a contradiction.
\end{proof}

If we manage to find a feasible solution for ~\eqref{SkS_problem_standard_formulation} with some efficient heuristics, where the number of edges is rather small, then we know from Lemma~\ref{penalty_feasible_solution} that we can choose a small penalty parameter. However, the number of edges in feasible solutions can still be large, which is why we continue to investigate whether it is possible to choose even a smaller penalty parameter.

By definition, we have  $m_k \leq m_{k+1}$ for all $k \in \{1, \ldots, n-1\}$. Furthermore, note that a sparsest subgraph on $k+1$ vertices can have at most $k$ edges more than a sparsest subgraph on $k$ vertices, while a sparsest subgraph on $k+2$ vertices has at most $k + k + 1 = 2k + 1$ more edges than a sparsest subgraph on $k$ vertices. Thus, if we deal with a graph on $n$ vertices, then a sparsest subgraph on $k+i$ vertices for some $i \in \{1, \ldots, n-k\}$, can have at most 
\begin{align*}
k + k + 1 + \ldots + k + i - 1 = ki + \frac{i(i-1)}{2}
\end{align*}
more edges than a sparsest $k$-subgraph in $G$, i.e., we have
\begin{equation}\label{eq:m_k_1}
m_{k+i}\le m_k+ki + \frac{i(i-1)}{2}.
\end{equation}

Similarly, a sparsest subgraph on $k$ vertices can have at most 
\begin{equation*} 
k - 1 + k - 2 + \ldots + k - j = kj - \frac{j(j+1)}{2}
\end{equation*}
more edges than a sparsest subgraph on $k-j$ vertices for any $j \in \{1, \ldots, k - 1\}$, i.e., the following inequality holds:
\begin{equation}\label{eq:m_k_2}
m_{k}\le m_{k-j}+kj - \frac{j(j+1)}{2}.
\end{equation}
This is a basis for the following lemma.\\

\begin{lem}\label{lemma_k_minus_1}
Let $G$ be a graph on $n$ vertices and $k \leq n$. Let $x^* \in \{0, 1\}^n$ be an optimal solution of ~\eqref{SkS_problem_standard_formulation} and let $\frac{\mu}{2} > k - 1$. Then,
\begin{itemize}
    \item [(i)]
\begin{align}\label{relation_k_minus_1_exact}
\frac{1}{2} x^{*T} A x^* < \frac{1}{2} x^T A x + \frac{\mu}{2}(e^Tx - k)^2
\end{align}
for all $x \in \{0, 1\}^n$ with $e^Tx \neq k$. 
    \item [(ii)] Any  optimal solution of~\eqref{SkS_QUBO_formulation_one}  is an optimal solution of ~\eqref{SkS_problem_standard_formulation}.
\end{itemize}
\end{lem}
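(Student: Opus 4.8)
The plan is to follow the two-step pattern of Lemma~\ref{penalty_feasible_solution}. First I would prove the strict inequality~\eqref{relation_k_minus_1_exact} of part {\it (i)} by splitting on the sign of $e^T x - k$, using the monotonicity $m_k\le m_{k+1}\le\cdots$ in the case $e^T x>k$ and the combinatorial bound~\eqref{eq:m_k_2} in the case $e^T x<k$; then part {\it (ii)} follows from {\it (i)} together with Lemma~\ref{lem:optimality_test_QUBO}, exactly as in Lemma~\ref{penalty_feasible_solution}{\it (ii)}. Throughout I would use the abbreviation $m_k=\tfrac12 x^{*T}Ax^*$ and the observation that whenever $e^T x=\ell$ the support of $x$ spans a subgraph on $\ell$ vertices, so $\tfrac12 x^TAx\ge m_\ell$.

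For $x$ with $e^T x=k+i$, $i\ge 1$ (so automatically $k+i\le n$), the claim is immediate: monotonicity gives $\tfrac12 x^TAx\ge m_{k+i}\ge m_k$, while $\tfrac{\mu}{2}(e^T x-k)^2=\tfrac{\mu}{2}i^2>0$ since $\mu>0$; adding the two yields~\eqref{relation_k_minus_1_exact}, and here the hypothesis $\tfrac{\mu}{2}>k-1$ is not used at all. The deficient case $e^T x=k-j$ with $1\le j\le k$ is the real content. Combining $\tfrac12 x^TAx\ge m_{k-j}$ with~\eqref{eq:m_k_2} rearranged as $m_{k-j}\ge m_k-kj+\tfrac{j(j+1)}{2}$ gives
\begin{align*}
\tfrac12 x^TAx+\tfrac{\mu}{2}(e^T x-k)^2\ \ge\ m_k-kj+\tfrac{j(j+1)}{2}+\tfrac{\mu}{2}j^2,
\end{align*}
so it remains to check $\tfrac{\mu}{2}j^2>kj-\tfrac{j(j+1)}{2}$ for all $j\in\{1,\dots,k\}$. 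Dividing by $j>0$ this is $\tfrac{\mu}{2}j>k-\tfrac{j+1}{2}$; its left side increases in $j$ and its right side decreases in $j$, so the constraint is tightest at $j=1$, where it reads exactly $\tfrac{\mu}{2}>k-1$. Hence the single assumed bound settles all $j$ at once.

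The one point needing care is the endpoint $j=k$, where $e^T x=0$ and $m_{k-j}$ becomes $m_0$: either note that~\eqref{eq:m_k_2} still holds there with the convention $m_0=0$ (it then just says $m_k\le\binom{k}{2}$), or argue directly that $\tfrac12 x^TAx=0$ and $m_k\le\binom{k}{2}\le(k-1)k^2<\tfrac{\mu}{2}k^2$. For part {\it (ii)} I would take any optimal solution $\hat x$ of~\eqref{SkS_QUBO_formulation_one}; if $e^T\hat x\neq k$, then {\it (i)} gives $\tfrac12\hat x^TA\hat x+\tfrac{\mu}{2}(e^T\hat x-k)^2>\tfrac12 x^{*T}Ax^*=\tfrac12 x^{*T}Ax^*+\tfrac{\mu}{2}(e^T x^*-k)^2$, contradicting optimality of $\hat x$ since $x^*$ is itself feasible for~\eqref{SkS_QUBO_formulation_one}; therefore $e^T\hat x=k$, and Lemma~\ref{lem:optimality_test_QUBO} upgrades $\hat x$ to an optimal solution of~\eqref{SkS_problem_standard_formulation}. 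The main obstacle is just the elementary optimization over $j$ in the deficient case — verifying that the worst choice is $j=1$ and that this is precisely what forces the threshold $k-1$ — together with the tidy treatment of the $j=k$ boundary; everything else mirrors the already-given proofs.
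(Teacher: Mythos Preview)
Your proof is correct and follows essentially the same two-case split as the paper's own argument, with the same use of the bound~\eqref{eq:m_k_2} in the deficient case and the same contradiction scheme for part~{\it (ii)}. Your treatment is in fact slightly tidier: you observe that the $e^Tx>k$ case needs only $\mu>0$ (the paper unnecessarily invokes $\tfrac{\mu}{2}>k-1$ there), you make explicit the monotonicity argument showing $j=1$ is the binding constraint (which the paper defers as ``a standard calculus task''), and you handle the boundary $j=k$ that the paper's stated range $j\in\{1,\dots,k-1\}$ silently omits.
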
 

\begin{proof}
We first prove {\it (i)}.
Let $x \in \{0, 1\}^n$ such that $e^T x = k + i$ for some $i \in \{1, \ldots, n-k\}$. Then, 
\begin{align}\label{in_1_lemma_k_minus_1}
\frac{1}{2} x^T A x + \frac{\mu}{2}(e^Tx - k)^2 \geq m_{k+i} + \frac{\mu}{2} i^2 > m_{k+i} +  (k-1) i^2 \geq m_k = \frac{1}{2} x^{*T} A x^*.
\end{align}
 Since~\eqref{in_1_lemma_k_minus_1} holds for all $i \in \{1, \ldots, n-k\}$, we obtain that~\eqref{relation_k_minus_1_exact} holds for all $x \in \{0, 1\}^n$ with $e^Tx > k$. 

Now let $x \in \{0, 1\}^n$ such that $e^T x = k - j$ for some $j \in \{1, \ldots, k - 1\}$. Then,
\begin{align}\label{in_2_lemma_k_minus_1}
\frac{1}{2} x^T A x + \frac{\mu}{2}(e^Tx - k)^2 > m_{k - j} + (k-1) j^2 \geq m_{k-j} + kj - \frac{j(j+1)}{2} \geq m_k = x^{*T} A x^*.
\end{align}
Note that the second inequality in the above chain of inequalities is not entirely straightforward; however, it is essentially a standard calculus task, so we omit its proof.

Since~\eqref{in_2_lemma_k_minus_1} holds for all $j \in \{1, \ldots, k-1\}$, this implies that~\eqref{relation_k_minus_1_exact} is satisfied also for all $x \in \{0, 1\}^n$ with $e^Tx < k$, which completes the proof of {\it (i)}.

To prove {\it (ii)}, let us assume that $\hat x$ is optimal for ~\eqref{SkS_QUBO_formulation_one}. If $e^T\hat x\neq k$, then {\it (i)} implies
$$\frac{1}{2} \hat x^T A \hat x + \frac{\mu}{2}(e^T\hat x - k)^2 > \frac{1}{2} x^{*T} A x^* +
\frac{\mu}{2}(e^{T} x^*- k)^2,$$
which contradicts the assumption of optimality of $\hat x$ for ~\eqref{SkS_QUBO_formulation_one}.
Therefore, $\hat x$ is feasible for ~\eqref{SkS_problem_standard_formulation}.
If it is not an optimal solution for ~\eqref{SkS_problem_standard_formulation}, then 
we have
$$\frac{1}{2} \hat x^T A \hat x+\frac{\mu}{2}(e^T\hat x - k)^2=\frac{1}{2} \hat x^T A \hat x > \frac{1}{2} x^{*T} A x^*=\frac{1}{2} x^{*T} A x^* +
\frac{\mu}{2}(e^{T} x^*- k)^2,$$
which again contradicts the optimality  of $\hat x$ for ~\eqref{SkS_QUBO_formulation_one}.
\end{proof}

Thus, according to Lemmas~\ref{penalty_feasible_solution} and~\ref{lemma_k_minus_1}, we have two possibilities to choose the value of the penalty parameter $\mu$. Moreover, we can combine these results as well. Let $\tilde{m}_k$ denote the number of edges in a feasible solution for ~\eqref{SkS_problem_standard_formulation}. Then, we can choose the parameter $\mu$ as
\begin{align}\label{choose_quadratic_penalty}
\frac{\mu}{2} > \min \left\{ \tilde{m}_k, k - 1 \right\}.
\end{align}

Next, we demonstrate that in some cases, even lower penalty parameters can be chosen. For this purpose, for a given $k \in \{2, \ldots, n\}$ we define sequence $\{\diff_\ell\mid 2\le \ell \le n\}$ as follows 
\begin{align*}
\diff_\ell : = m_\ell - m_{\ell-1},
\end{align*}
and for $\ell=1$, we set $\diff_1 = m_1=0$. Furthermore, we say that the sequence $\{\diff_\ell\}$ is monotonically increasing in $G$ 
if $\diff_{\ell} \leq \diff_{\ell+1}$ for all $\ell \in \{1, \ldots, n-1\}$ and therewith
\begin{align*}
0 \leq m_2 - m_{1} \leq m_3 - m_{2} \leq \ldots \leq m_{n-1} - m_{n} \leq m_n - m_{n-1}.
\end{align*}

Note that graphs where $\{\diff_\ell\}$ is monotonically increasing exist. Some trivial examples of such graphs are empty graphs (graphs with no edges), for which we have 
$\diff_1 = \diff_2 = \diff_3 = \ldots =  \diff_n = 0$, and complete graphs, for which we have 
$\diff_1 = 0,  \diff_2 = 1,  \ldots, \diff_k=k-1,\ldots,   \diff_n = n-1.$
\\

\begin{lem}\label{quadratic_SkS_lower_penalty}
Let $G$ be a graph on $n$ vertices and $k \leq n$. Suppose that the sequence $\{\diff_{\ell}\}$ is monotonically increasing in $G$. Let 
 $\frac{\mu}{2} > \diff_k$ and $x^* \in \{0, 1\}^n$ be an optimal solution for ~\eqref{SkS_problem_standard_formulation}. Then,
\begin{itemize}
\item [(i)]
    \begin{align}\label{SkS_quadratic_3}
\frac{1}{2} x^{*T} A x^* < \frac{1}{2} x^T A x + \frac{\mu}{2}(e^Tx - k)^2
\end{align}
for all $x \in \{0, 1\}^n$ with $e^Tx \neq k$. 

   \item [(ii)]Any  optimal solution of~\eqref{SkS_QUBO_formulation_one}  is an optimal solution of ~\eqref{SkS_problem_standard_formulation}.
\end{itemize}
\end{lem}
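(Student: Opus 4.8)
The plan is to mirror the two-step template of Lemmas~\ref{penalty_feasible_solution} and~\ref{lemma_k_minus_1}. First I would prove the strict inequality~\eqref{SkS_quadratic_3} for every $x\in\{0,1\}^n$ with $e^Tx\neq k$, treating the cases $e^Tx>k$ and $e^Tx<k$ separately. Then part~\textit{(ii)} follows from part~\textit{(i)} by the standard contradiction argument: if $\hat x$ is optimal for~\eqref{SkS_QUBO_formulation_one} then~\textit{(i)} forces $e^T\hat x=k$, so $\hat x$ is feasible for~\eqref{SkS_problem_standard_formulation}, and optimality there is then immediate (one may simply invoke Lemma~\ref{lem:optimality_test_QUBO}).

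For $e^Tx=k+i$ with $i\in\{1,\dots,n-k\}$ no new idea is needed and the monotonicity hypothesis is not used: since $\frac12 x^TAx\ge m_{k+i}\ge m_k$ and $\frac{\mu}{2}i^2>0$ (here $\mu>0$ because $\diff_k\ge 0$), we get $\frac12 x^TAx+\frac{\mu}{2}(e^Tx-k)^2 > m_k=\frac12 x^{*T}Ax^*$. The real content is the case $e^Tx=k-j$ with $j\in\{1,\dots,k-1\}$. Writing the telescoping identity $m_k-m_{k-j}=\sum_{\ell=k-j+1}^{k}\diff_\ell$ and bounding each summand by $\diff_k$ via the monotone-increasing assumption gives $m_k-m_{k-j}\le j\,\diff_k$. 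Using $\frac{\mu}{2}>\diff_k$, $j\ge 1$, and $j^2\ge j$, one then has $\frac{\mu}{2}j^2\ge\frac{\mu}{2}j > j\,\diff_k\ge m_k-m_{k-j}$, hence $\frac12 x^TAx+\frac{\mu}{2}(e^Tx-k)^2\ge m_{k-j}+\frac{\mu}{2}j^2>m_k=\frac12 x^{*T}Ax^*$, which is~\eqref{SkS_quadratic_3}.

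The only delicate point — the ``obstacle'', such as it is — is to see where the hypothesis $\frac{\mu}{2}>\diff_k$ is actually needed: the estimate $m_k-m_{k-j}\le j\,\diff_k$ only requires $\frac{\mu}{2}>\diff_k/j$, whose worst case is $j=1$, i.e.\ $e^Tx=k-1$, where $m_k-m_{k-1}=\diff_k$ exactly; so the bound on $\mu$ is sharp for this line of argument. One must also be careful to use $j^2\ge j$ for integer $j\ge1$ and $\diff_k\ge0$ rather than any cruder estimate, and to note that the earlier generic bounds~\eqref{eq:m_k_1} and~\eqref{eq:m_k_2} are not tight enough here — it is precisely monotonicity of $\{\diff_\ell\}$ that lets one replace $kj-\frac{j(j+1)}{2}$ by the smaller quantity $j\,\diff_k$, which is what makes the smaller penalty parameter admissible.
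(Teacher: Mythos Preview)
Your proposal is correct and follows the same two-case structure as the paper's own proof; the $e^Tx<k$ case and part~\textit{(ii)} are essentially identical to the paper. The one small difference is that for $e^Tx=k+i$ the paper invokes $\frac{\mu}{2}>\diff_k$ and then $m_{k+i}\ge m_{k-1}$ to conclude, whereas your argument --- using only $m_{k+i}\ge m_k$ and $\mu>0$ --- is simpler and shows more clearly that neither monotonicity nor the specific bound on $\mu$ is needed on that side.
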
 

\begin{proof}
To prove {\it (i)}, let $x \in \{0, 1\}^n$ such that $e^Tx = k + i$ for some $i \in \{1, \ldots, n-k\}$. Then,
\begin{align}\label{relation_1_smaller_than_k}
\frac{1}{2} x^T A x + \frac{\mu}{2}(e^Tx - k)^2 \geq m_{k+i} + \frac{\mu}{2} i^2 \geq m_{k+i} + \frac{\mu}{2}  > m_{k+i} + m_k - m_{k-1} \geq m_k,
\end{align}
since $m_{k+i} \geq m_{k-1}$. Furthermore, note that $m_k = \frac{1}{2} x^{*T} A x^*$. Given that~\eqref{relation_1_smaller_than_k} holds for all $i \in \{1, \ldots, n-k\}$, we obtain that~\eqref{SkS_quadratic_3} holds for all $x \in \{0, 1\}^n$ with $e^Tx > k$. 

Now let $x \in \{0, 1\}^n$ such that $e^T x = k - j$ for some $j \in \{1, \ldots, k - 1\}$. First, we recall that, according to assumption, $\{\diff_\ell\}$ is monotonically increasing in $G$, so by assumption on $\frac{\mu}{2}$, we have that
\begin{align*}
\frac{\mu}{2} > m_{k} - m_{k-1} \geq m_{k-1} - m_{k-2} \geq \ldots \geq m_{k-j+1} - m_{k-j},
\end{align*}
and therefore
\begin{align*}
\frac{\mu}{2} j > (m_k - m_{k-1}) + (m_{k-1} - m_{k-2}) + \ldots + (m_{k-j+1} - m_{k-j}) = m_k - m_{k - j}.
\end{align*}

Hence,
\begin{align}\label{relation_2_smaller_than_k}
\frac{1}{2} x^T A x + \frac{\mu}{2}(e^Tx - k)^2 \geq m_{k-j} + \frac{\mu}{2} j^2 \geq m_{k-j} + \frac{\mu}{2} j > m_{k-j} + m_k - m_{k-j} \geq m_k.
\end{align}
Since~\eqref{relation_2_smaller_than_k} is valid for any $j \in \{1, \ldots, k-1\}$ inequality ~\eqref{SkS_quadratic_3} also holds for all $x \in \{0, 1\}^n$ with $e^Tx < k$, which finishes the proof of {\it (i)}.

The proof of {\it (ii)} essentially follows the proofs of Lemmas \ref{penalty_feasible_solution} and \ref{lemma_k_minus_1}.
Suppose $\hat x$ is an optimum of ~\eqref{SkS_QUBO_formulation_one}. If $\hat x$ is not feasible for ~\eqref{SkS_problem_standard_formulation}, it can not be optimal for 
\eqref{SkS_QUBO_formulation_one}, so it holds $e^T\hat x=k$.
If $\hat x$ is not an optimum of ~\eqref{SkS_problem_standard_formulation} then again it can not be optimal for  ~\eqref{SkS_QUBO_formulation_one}, so $\hat x$ must also be an optimum for 
\eqref{SkS_problem_standard_formulation}.
\end{proof}

Hence, from Lemma~\ref{quadratic_SkS_lower_penalty} we know that in certain cases, we can choose even a lower penalty parameter than the one proposed in~\eqref{choose_quadratic_penalty}. However, we cannot determine in advance whether $\{\diff_\ell\}$ will satisfy the required property for a given graph. Additionally, we do not know the value of the sequence $\{\diff_\ell\}$. Nevertheless, we can estimate its value by computing feasible solutions with $k$ and $k-1$ vertices. Then, we can attempt to solve the problem using this approximated penalty parameter. If the obtained optimal solution has the desired cardinality, we have an optimal solution to the original problem. Otherwise, we revert to the standard approach and choose the penalty term as in~\eqref{choose_quadratic_penalty} to ensure exactness.

\subsection{Lagrangian  relaxation of \ref{SkS_problem_standard_formulation}}
\label{section_LR_SkS}

In this section, we consider the Lagrangian relaxation of~\eqref{SkS_problem_standard_formulation}, which means that we move the linear constraint $e^Tx=k$, multiplied with a (Lagrangian dual) parameter $\lambda$, to the objective function, see, for instance,~\cite{Ahuja1993}.
For the sake of simplicity, we move to the objective function $\lambda(k-e^Tx)$ instead of 
$\lambda(e^Tx-k)$. These are clearly equivalent because the constraint is an equality, and therefore $\lambda$ is an unconstrained dual variable.

The Lagrangian relaxation of ~\eqref{SkS_problem_standard_formulation} is therefore: 
\begin{align}\label{SkS_QUBO_Lagrangian}
\tag{\mbox{SkS-L}}
     \min \left\{\frac{1}{2}x^TAx + \lambda(k-e^Tx) \mid x \in \{0, 1\}^n \right\}.
\end{align}

Note that the classical dual theory implies that for any $\lambda \in \mathbb{R}$, the optimum value of the relaxation~\eqref{SkS_QUBO_Lagrangian} is a lower bound on the optimum value of~\eqref{SkS_problem_standard_formulation}; for details, see~\cite{Ahuja1993}.
Since the term $\lambda k$ is a constant, we can leave it out of the objective function and solve only the following problem:
\begin{align} 
\tag{\mbox{SkS-L'}}
  \min \left\{\frac{1}{2}x^TAx - \lambda e^Tx \mid x \in \{0, 1\}^n \right\}.
  \label{SkS_QUBO_Lagrangian_1} 
\end{align} 
We get the optimum value of ~\eqref{SkS_QUBO_Lagrangian} from the optimum value of 
\eqref{SkS_QUBO_Lagrangian_1} by adding $\lambda k$. 

Suppose now that for some $\lambda \in  \mathbb{R}$, the optimal solution of~\eqref{SkS_QUBO_Lagrangian} satisfies the desired cardinality constraint $k$. Then, as established in the following lemma—analogous to Lemma \ref{lem:optimality_test_QUBO}—this solution is also optimal for the original formulation ~\eqref{SkS_problem_standard_formulation}.\\

\begin{lem}\label{lem:optimality_test_LR}
Let $G$ be a graph on $n$ vertices and $k\le n$. Furthermore, let $x^*$ be an optimal solution of the relaxation~\eqref{SkS_QUBO_Lagrangian_1} for some $\lambda \in \mathbb{R}$. If $e^T x^* = k$, then $x^*$ is also an optimal solution of the original problem~\eqref{SkS_problem_standard_formulation}.
\end{lem}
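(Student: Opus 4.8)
The plan is to mimic the proof of Lemma~\ref{lem:optimality_test_QUBO}, exploiting the fact that the Lagrangian term $\lambda(k - e^Tx)$ vanishes precisely when $e^Tx = k$, just as the quadratic penalty does. First I would start from the optimality of $x^*$ for~\eqref{SkS_QUBO_Lagrangian_1}, which gives
\begin{align*}
\tfrac{1}{2} x^{*T} A x^* - \lambda e^T x^* \le \tfrac{1}{2} x^T A x - \lambda e^T x
\end{align*}
for all $x \in \{0,1\}^n$. Since $e^T x^* = k$ by hypothesis, the left-hand side equals $\tfrac{1}{2} x^{*T} A x^* - \lambda k$. Restricting attention to those $x$ with $e^T x = k$ makes the right-hand side equal to $\tfrac{1}{2} x^T A x - \lambda k$, and the common term $-\lambda k$ cancels, leaving $\tfrac{1}{2} x^{*T} A x^* \le \tfrac{1}{2} x^T A x$ for all binary $x$ of cardinality $k$.

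Next I would note that $x^*$ is itself feasible for~\eqref{SkS_problem_standard_formulation}, since $e^T x^* = k$. Combining feasibility with the inequality just derived shows that $x^*$ attains the minimum of $\tfrac{1}{2} x^T A x$ over all feasible points, hence is optimal for~\eqref{SkS_problem_standard_formulation}. This can also be phrased as a contradiction argument exactly as in Lemma~\ref{lem:optimality_test_QUBO}: if some $\hat x$ with $e^T\hat x = k$ had strictly smaller objective, then subtracting $\lambda k$ from both sides would contradict the optimality of $x^*$ for~\eqref{SkS_QUBO_Lagrangian_1}. One should also recall that passing between~\eqref{SkS_QUBO_Lagrangian} and~\eqref{SkS_QUBO_Lagrangian_1} only shifts the objective by the constant $\lambda k$, so optimality is preserved and it suffices to argue with~\eqref{SkS_QUBO_Lagrangian_1}.

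There is essentially no obstacle here; the argument is a routine adaptation, and the only point requiring a moment's care is that the cancellation of the linear Lagrangian term is valid only when we compare $x^*$ against competitors that also have cardinality exactly $k$ — for other competitors the term $-\lambda e^T x$ need not vanish, but those are irrelevant since they are infeasible for~\eqref{SkS_problem_standard_formulation} anyway. The non-negativity of $A$, which was needed in Proposition~\ref{penalty_general_method}, plays no role here, mirroring the situation in Lemma~\ref{lem:optimality_test_QUBO}.
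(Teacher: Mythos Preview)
Your proposal is correct and follows essentially the same route as the paper: start from the optimality inequality for~\eqref{SkS_QUBO_Lagrangian_1}, substitute $e^T x^* = k$, restrict to competitors with $e^T x = k$, cancel $-\lambda k$, and conclude. The paper's proof is slightly more terse (it omits the explicit contradiction phrasing and the remark about~\eqref{SkS_QUBO_Lagrangian} versus~\eqref{SkS_QUBO_Lagrangian_1}), but the argument is the same.
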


\begin{proof}
We proceed in the same manner as in the proof of Lemma~\ref{lem:optimality_test_QUBO}. First, given that $x^*$ is optimal for~\eqref{SkS_QUBO_Lagrangian_1}, we note that
\begin{align*}
\frac{1}{2}x^{*T} A x^* - \lambda e^T x^* \leq \frac{1}{2} x^T A x - \lambda  e^T x
\end{align*}
for all $x \in \{0,1\}^n$. The assumption that  $e^T x^* = k$ implies 
\begin{align*}
\frac{1}{2} x^{*T} A x^* -\lambda k\leq \frac{1}{2} x^T A x - \lambda  e^T x
\end{align*}
for all $x \in \{0,1\}^n$, and, in particular,
\begin{align*}
\frac{1}{2}x^{*T} A x^* \leq \frac{1}{2} x^T A x 
\end{align*}
for all $x \in \{0,1\}^n$ with $e^T x = k$. Hence $x^*$ is optimal also for~\eqref{SkS_problem_standard_formulation}.
\end{proof}

Similarly to the quadratic penalty approach, we will provide some theoretical results about the values of $\lambda$  which ensure that the optimal solutions of ~\eqref{SkS_QUBO_Lagrangian_1} are optimal also for the original problem.

We start with some simple observations. Recall that the multiplier $\lambda$ is not constrained and can take both positive and negative values. However, if $\lambda < 0$, then the problem~\eqref{SkS_QUBO_Lagrangian_1} becomes
\begin{align}
\min \left\{\frac{1}{2}x^TAx + \tilde{\lambda} e^Tx \mid x \in \{0, 1\}^n \right\}\label{negative_lambda}
\end{align}
for $\tilde{\lambda} = - \lambda > 0$. Clearly, the minimum of~\eqref{negative_lambda} equals zero and is attained when $x$ is the zero vector.

If $\lambda = 0$, then the problem~\eqref{SkS_QUBO_Lagrangian_1} is reduced to minimizing the number of edges $\frac{1}{2}x^TAx$. Since the minimum value of this objective function is zero, any incidence vector of a stable set in the graph, as well as the zero vector, is a feasible solution.

If $\lambda = 1$, then solving the problem~\eqref{SkS_QUBO_Lagrangian_1} becomes equivalent to solving the QUBO formulation for the maximum stable set problem with the penalty parameter $\frac{1}{2}$. In particular, for a given $G$, the optimal value of~\eqref{SkS_QUBO_Lagrangian_1} in this case corresponds to $-\alpha(G)$; for details, see~\cite[Lemma 2]{Krpan2024}. Moreover, for any $\lambda \in (0, 1)$, any optimal solution of~\eqref{SkS_QUBO_Lagrangian_1} is the incidence vector of a maximum stable set, as the next statement shows.\\

\begin{lem}\label{stable_set_lagrangian}
Let $G$ be a graph on $n$ vertices. If $\lambda \in (0,1)$, then any optimum solution of~\eqref{SkS_QUBO_Lagrangian_1}  is the incidence vector of a maximum stable set in $G$.
\end{lem}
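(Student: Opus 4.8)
The plan is to recast the objective of \eqref{SkS_QUBO_Lagrangian_1} in combinatorial terms and bound it from below. For $x \in \{0,1\}^n$ write $S = \{i : x_i = 1\}$ for its support and $e(S) := \frac{1}{2} x^T A x$ for the number of edges of the subgraph induced by $S$, so the objective equals $g(x) := e(S) - \lambda\,|S|$. I claim that $g(x) \ge -\lambda\,\alpha(G)$ for every $x \in \{0,1\}^n$, with equality if and only if $S$ is a maximum stable set of $G$. Since maximum stable sets exist, this bound is attained, and the claim identifies the minimizer set of \eqref{SkS_QUBO_Lagrangian_1} exactly with the set of incidence vectors of maximum stable sets, which is the assertion of the lemma.

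The key ingredient is the elementary inequality $e(S) \ge |S| - \alpha(G)$, valid for every $S \subseteq V$. To prove it, let $c$ be the number of connected components of the induced subgraph $G[S]$. Summing the bound ``a connected graph on $p$ vertices has at least $p-1$ edges'' over the components gives $e(S) \ge |S| - c$. On the other hand, picking one vertex from each component yields a stable set of $G[S]$ of size $c$, hence $c \le \alpha(G[S]) \le \alpha(G)$, and the inequality follows.

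With this in hand I would split into cases according to whether $S$ is stable. If $S$ is a stable set, then $e(S) = 0$ and $g(x) = -\lambda |S| \ge -\lambda\,\alpha(G)$, using $\lambda > 0$ and $|S| \le \alpha(G)$, with equality precisely when $|S| = \alpha(G)$, i.e.\ when $S$ is a maximum stable set. If $S$ is not stable, then $e(S) \ge 1$, and I must establish the strict inequality $g(x) > -\lambda\,\alpha(G)$, equivalently $e(S) > \lambda\,(|S| - \alpha(G))$. When $|S| \le \alpha(G)$ the right-hand side is $\le 0 < 1 \le e(S)$. When $|S| > \alpha(G)$, the graph inequality gives $e(S) \ge |S| - \alpha(G) \ge 1$, and since $0 < \lambda < 1$ we get $\lambda\,(|S| - \alpha(G)) < |S| - \alpha(G) \le e(S)$. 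In every case the inequality is strict, so no non-stable support and no non-maximum stable support can be optimal, which finishes the proof.

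The only slightly delicate point — and the place where the hypothesis $\lambda < 1$ (as opposed to merely $\lambda > 0$) is genuinely needed — is the subcase $|S| > \alpha(G)$: here one must see that the edges forced by the ``excess'' vertices strictly outweigh the $\lambda$-reward for including them, which is exactly what $\lambda < 1$ buys. Everything else is routine case bookkeeping, with the main care being to track when each inequality is strict so as to conclude that \emph{every} optimal solution, and not merely some, is the incidence vector of a maximum stable set.
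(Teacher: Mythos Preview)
Your proof is correct and follows essentially the same route as the paper: both argue that the objective is bounded below by $-\lambda\,\alpha(G)$ with equality exactly at maximum stable sets, via the same case split (stable vs.\ non-stable support, and for non-stable supports, $|S|\le\alpha(G)$ vs.\ $|S|>\alpha(G)$) and the same key inequality $e(S)\ge |S|-\alpha(G)$. The only notable difference is in how that inequality is justified --- you derive it cleanly from $e(S)\ge |S|-c$ together with $c\le\alpha(G)$ via one vertex per component, whereas the paper argues it by iteratively deleting an endpoint of each edge; your version is a bit tidier but the substance is the same.
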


\begin{proof}
Let $x^*$ be the incidence vector of a maximum stable set in $G$. Then, clearly, $e^T x^* = \alpha(G)$ and $\frac{1}{2} x^{*T} A x^* = 0$. Now, assume for contradiction that $\lambda \in (0,1)$ but $x^*$ is not an optimal solution of the formulation~\eqref{SkS_QUBO_Lagrangian_1}. Let $\hat{x} \in \{0,1\}^n$ be  an optimal solution of~\eqref{SkS_QUBO_Lagrangian_1} that is not the incidence vector of a maximum stable set in $G$. 

If $\hat{x}$ is the incidence vector of a stable set in $G$, then $e^T \hat{x} < \alpha(G)$ and $\frac{1}{2} \hat{x}^{T} A \hat{x} = 0$. However, since $\lambda > 0$, we obtain
\begin{align*}
\frac{1}{2} \hat{x}^T A \hat{x} - \lambda e^T \hat{x} = - \lambda e^T \hat{x} > - \lambda \alpha(G) = \frac{1}{2} x^{*T} A x^* - \lambda e^T x^*,
\end{align*}
contradicting the optimality of $\hat{x}$.

If $\hat{x}$ is not the incidence vector of a stable set in $G$, then $\frac{1}{2} \hat{x}^T A \hat{x} > 0$ holds. We must consider two cases:  (i) if $e^T \hat{x} \leq \alpha(G)$, then
\begin{align*}
\frac{1}{2} \hat{x}^T A \hat{x} - \lambda e^T \hat{x} >  -\lambda e^T \hat{x} \geq - \lambda \alpha(G) = \frac{1}{2} x^{*T} A x^* - \lambda e^T x^*,
\end{align*}
again contradicting the optimality of $\hat{x}$.

(ii) If $e^T \hat{x} > \alpha(G)$, i.e., $e^T \hat{x} = \alpha(G) + j$ for some $j \geq 1$, then
we have  $\frac{1}{2}\hat{x}^T A \hat{x} \geq j$.
This follows from the observation that if the graph induced by $\hat{x}$, which has $\alpha(G) + j$ vertices, has less than $j$ edges, then we can remove from this induced graph one end vertex for  each edge  and obtain this way a stable set with more than $\alpha(G)$ vertices, which is a contradiction.

Therefore, 
\begin{align*}
\frac{1}{2} \hat{x}^T A \hat{x} - \lambda e^T \hat{x} 
& \geq j - \lambda (\alpha(G) + j) = - \lambda \alpha(G) + (1 - \lambda) j\\
& > - \lambda \alpha(G) = \frac{1}{2} x^{*T} A x^* - \lambda e^T x^*,
\end{align*}
since $\lambda \in (0, 1)$ and $j \geq 1$. But this again contradicts the assumption on optimality of $\hat{x}$. Hence, an optimal solution of~\eqref{SkS_QUBO_Lagrangian_1} must be the incidence vector of a maximum stable set in $G$.
\end{proof}

Therefore, if we solve ~\eqref{SkS_QUBO_Lagrangian_1} with $\lambda \in (0, 1)$ and $k < \alpha(G)$, then we can extract from computed optimal solution (which is a maximum stable set) an optimal solution of~\eqref{SkS_problem_standard_formulation} with cardinality $k$ by simply taking any subset of $k$ vertices from the optimum solution.

Within the rest of this subsection, we establish several partial results which demonstrate relations between the values of $\lambda$ and the solutions of \eqref{SkS_QUBO_Lagrangian_1}.

\begin{lem}\label{lambda_greater_k}
Let $G$ be a graph on $n$ vertices, and let $k$ be a positive integer such that $k \leq n - 1$. If $\lambda > k$, then no optimal solution of~\eqref{SkS_QUBO_Lagrangian_1} corresponds to an optimal solution of ~\eqref{SkS_problem_standard_formulation}.
\end{lem}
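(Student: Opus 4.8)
The plan is to reduce the claim, via Lemma~\ref{lem:optimality_test_LR}, to a statement purely about cardinalities, and then to derive a contradiction by perturbing a putative optimizer. Recall that by Lemma~\ref{lem:optimality_test_LR}, if an optimal solution of~\eqref{SkS_QUBO_Lagrangian_1} selects exactly $k$ vertices, then it is automatically optimal for~\eqref{SkS_problem_standard_formulation}; conversely, every optimal solution of~\eqref{SkS_problem_standard_formulation} has cardinality $k$ by feasibility. Hence the assertion ``no optimal solution of~\eqref{SkS_QUBO_Lagrangian_1} corresponds to an optimal solution of~\eqref{SkS_problem_standard_formulation}'' is equivalent to: \emph{no} optimal solution $x^*$ of~\eqref{SkS_QUBO_Lagrangian_1} satisfies $e^T x^* = k$. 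So I would assume, for contradiction, that such an $x^*$ exists.

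Under this assumption, Lemma~\ref{lem:optimality_test_LR} gives $\frac{1}{2} x^{*T} A x^* = m_k$, so the optimal value of~\eqref{SkS_QUBO_Lagrangian_1} equals $m_k - \lambda k$. The next step is to exhibit a strictly better feasible point. Here the hypothesis $k \le n-1$ is essential: it guarantees that a $(k+1)$-subgraph exists, and the bound established just before the statement (inequality~\eqref{eq:m_k_1} with $i=1$: extending a sparsest $k$-subgraph by a single vertex creates at most $k$ new edges) yields $m_{k+1} \le m_k + k$. Let $\hat{x} \in \{0,1\}^n$ be the incidence vector of a sparsest $(k+1)$-subgraph, so that $e^T \hat{x} = k+1$ and $\frac{1}{2} \hat{x}^T A \hat{x} = m_{k+1}$.

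Evaluating the objective of~\eqref{SkS_QUBO_Lagrangian_1} at $\hat{x}$ then gives
\begin{align*}
\frac{1}{2} \hat{x}^T A \hat{x} - \lambda e^T \hat{x} = m_{k+1} - \lambda(k+1) \le m_k + k - \lambda(k+1) = (m_k - \lambda k) + (k - \lambda) < m_k - \lambda k,
\end{align*}
where the last inequality uses $\lambda > k$. This contradicts the optimality of $x^*$, and the claim follows.

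I do not expect a serious obstacle; the argument is short. The only points requiring care are (a) the correct reading of ``corresponds to an optimal solution,'' which is pinned down precisely by Lemma~\ref{lem:optimality_test_LR}, and (b) ensuring that the comparison vector $\hat{x}$ actually exists and has the claimed edge count, both of which are handled by the hypothesis $k \le n-1$ together with the already-proved inequality~\eqref{eq:m_k_1}. One could alternatively phrase the whole argument without referring to $m_{k+1}$ at all, by taking any vertex outside the support of $x^*$ and noting it contributes at most $k$ new edges to the induced subgraph; this avoids invoking~\eqref{eq:m_k_1} entirely and makes the dependence on $k \le n-1$ fully transparent.
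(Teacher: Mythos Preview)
Your proof is correct and follows essentially the same route as the paper: assume an optimizer of~\eqref{SkS_QUBO_Lagrangian_1} has cardinality $k$, compare its objective value with that of a sparsest $(k+1)$-subgraph, and use $m_{k+1}\le m_k+k$ together with $\lambda>k$ to obtain a contradiction. Your explicit invocation of Lemma~\ref{lem:optimality_test_LR} to pin down what ``corresponds to an optimal solution'' means is a nice touch of rigor that the paper leaves implicit.
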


\begin{proof}
Assume for contradiction that $\lambda > k$ and that there exists an incidence vector $x^*$ which is an optimal solution to~\eqref{SkS_QUBO_Lagrangian_1} and to ~\eqref{SkS_problem_standard_formulation}. 
Let $m^*$ denote the number of edges in the subgraph induced by $x^*$. By optimality, we have: 
\begin{align}\label{otptimal_sol_k}
\frac{1}{2} x^{*T} A x^* - \lambda e^T x^* = m^* - \lambda k \leq \frac{1}{2} x^T A x - \lambda e^T x
\end{align}
for all $x \in \{0, 1\}^n$.

Now, consider $\tilde{x}$, the incidence vector of a sparsest $(k+1)$-subgraph in $G$. Note that such a subgraph exists because $k \leq n - 1$. Furthermore, let $\tilde{m}$ denote the number of edges in the subgraph induced by $\tilde{x}$. Since a sparsest $(k+1)$-subgraph in $G$ can have at most $k$ edges more than a sparsest $k$-subgraph in $G$, we have that $\tilde{m} - m^* = a$ for some $a \in \{0, \ldots, k\}$.

Substituting $\tilde{x}$ in~\eqref{otptimal_sol_k}, we obtain:
\begin{align*}
m^* - \lambda k & \leq \frac{1}{2} \tilde{x}^T A \tilde{x} - \lambda e^T \tilde{x} = \tilde{m} - \lambda (k+1) = m^* + a - \lambda k - \lambda,
\end{align*}
and therefore $\lambda \leq a$. But $a \in \{0, \dots, k\}$, implying that $\lambda \leq k$.

This contradicts the assumption that $\lambda > k$. Therefore, an optimal solution to~\eqref{SkS_QUBO_Lagrangian_1} cannot be optimal to~\eqref{SkS_problem_standard_formulation}, when $\lambda > k$. 
\end{proof}

\begin{prop}\label{roman:obs}
Let $x^*$ be a minimizer of~\eqref{SkS_QUBO_Lagrangian_1} for some $\lambda\in [0,\infty)$ and let $S = (V^*, E^*)$ be the subgraph of $G$ induced by $x^*$ and $k = e^T x^*$. Then,
\begin{enumerate}
\item[(1)]\label{one} $S$ is a sparsest $k$-subgraph of the graph $G$.
\item[(2)]\label{two} $\Delta(S)\leq\lambda$.
\item[(3)]\label{three} If $d_S(v) = \lambda$ for some $v\in V^*$, then $S-v$ is a sparsest $(k-1)$-subgraph of the graph $G$.
\end{enumerate}
\end{prop}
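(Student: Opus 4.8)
The plan is to prove the three statements in sequence, since (1) is essentially a restatement of Lemma~\ref{lem:optimality_test_LR}, and (2) and (3) follow from local-exchange arguments exploiting the optimality of $x^*$ for~\eqref{SkS_QUBO_Lagrangian_1}. For (1), I would simply invoke Lemma~\ref{lem:optimality_test_LR}: since $x^*$ is a minimizer of~\eqref{SkS_QUBO_Lagrangian_1} with $e^T x^* = k$, that lemma directly gives that $x^*$ is an optimal solution of~\eqref{SkS_problem_standard_formulation}, i.e.\ $S$ is a sparsest $k$-subgraph of $G$. (One first notes $\lambda \geq 0$ is consistent with a nonzero optimum; if $\lambda < 0$ the only minimizer is the zero vector, handled trivially, but the statement presupposes the induced subgraph $S$ with $k = e^T x^*$, so we may take the substantive case.)

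For (2), the idea is a single-vertex deletion perturbation. Fix $v \in V^*$ and let $x' = x^* - \mathbf{e}_v$ (the incidence vector with $v$ removed), which is still in $\{0,1\}^n$. Deleting $v$ removes exactly $d_S(v)$ edges from the induced subgraph and decreases $e^T x$ by one, so
\begin{align*}
\tfrac{1}{2} x'^T A x' - \lambda e^T x' = \bigl(\tfrac{1}{2} x^{*T} A x^* - d_S(v)\bigr) - \lambda (e^T x^* - 1) = \bigl(\tfrac{1}{2} x^{*T} A x^* - \lambda e^T x^*\bigr) - d_S(v) + \lambda.
\end{align*}
Optimality of $x^*$ forces $-d_S(v) + \lambda \geq 0$, i.e.\ $d_S(v) \leq \lambda$; taking the maximum over $v \in V^*$ yields $\Delta(S) \leq \lambda$.

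For (3), suppose $d_S(v) = \lambda$ for some $v \in V^*$. Then the computation above shows $x' = x^* - \mathbf{e}_v$ achieves exactly the same objective value in~\eqref{SkS_QUBO_Lagrangian_1} as $x^*$, so $x'$ is also a minimizer of~\eqref{SkS_QUBO_Lagrangian_1}, now with $e^T x' = k - 1$. Applying Lemma~\ref{lem:optimality_test_LR} again (with $k-1$ in place of $k$) gives that $x'$ is optimal for the SkS problem with cardinality $k-1$, i.e.\ $S - v$ is a sparsest $(k-1)$-subgraph of $G$. The main thing to be careful about is that the perturbations $x^* \pm \mathbf{e}_v$ stay in the feasible set $\{0,1\}^n$ (they do, since $v \in V^*$ for deletion) and that the bookkeeping of edge counts under vertex deletion is exact — there is no real obstacle here, only the need to state the edge-count change cleanly; the one modeling subtlety is ensuring $\lambda \geq 0$ is actually used (it guarantees the $k=0$ degenerate cases don't interfere and that $S$ is well-defined as a nonempty induced subgraph when relevant), but since all three claims are phrased relative to the given minimizer $x^*$ and its induced $S$, the arguments go through verbatim.
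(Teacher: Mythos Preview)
Your proof is correct and follows essentially the same approach as the paper: invoking Lemma~\ref{lem:optimality_test_LR} for (1), comparing $x^*$ with the single-vertex deletion $x^* - \mathbf{e}_v$ for (2), and using the resulting equality case for (3). The only cosmetic difference is that the paper phrases (2) as a proof by contradiction (assume $d_S(v) > \lambda$ and derive a strictly smaller objective), whereas you argue directly that optimality forces $-d_S(v) + \lambda \geq 0$; the underlying computation is identical.
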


\begin{proof}

~
\begin{enumerate}
\item[(1)] This is essentially the claim of Lemma \ref{lem:optimality_test_LR}.

    \item[(2)] Suppose there exists a vertex $v \in V^*$ such that $d_S(v) > \lambda$. Let  $S - v$ denote the subgraph obtained by removing $v$ from $S$ and let $\tilde x$ be an incidence vector representing   $S-v$. We have $e^T\tilde x=k-1$ and the number of edges in the subgraphs defined by $\tilde x$ is the number of edges in $S$, reduced by  $d_S(v)$. Therefore, we have   
    \begin{align}\label{eq:Prop_8_2}
    \frac{1}{2}\tilde x^TA\tilde x-\lambda e^T\tilde x = 
    \frac{1}{2} x^{*T}A x^*-d_S(v) - \lambda e^T x^*+\lambda  < 
    \frac{1}{2} x^{*T}A x^* - \lambda e^T x^*,    
    \end{align}
      which  contradicts the assumption that $x^*$ minimizes~\eqref{SkS_QUBO_Lagrangian_1}. Hence, $\Delta(S) \leq \lambda$.
    
    \item[(3)] Let us introduce $\tilde x$ same was as in item (2).
    Now, the chain of inequalities in ~\eqref{eq:Prop_8_2} is actually the chain of equalities, since $d_S(v) = \lambda$. Therefore, the objective value of~\eqref{SkS_QUBO_Lagrangian_1} remains unchanged. This implies that $\tilde x$ is also a minimizer of ~\eqref{SkS_QUBO_Lagrangian_1}. By item (1), $S - v$ must be a sparsest $(k-1)$-subgraph of $G$.
\end{enumerate}
\end{proof}

\begin{cor}\label{lambda_equals_one}
Let $x^*$ be a minimizer of~\eqref{SkS_QUBO_Lagrangian_1} for $\lambda=1$ and let $S = (V^*, E^*)$ be the subgraph of $G$ induced by $x^*$ and $k = \vert V^* \vert$. Then, the subgraph $S'$ obtained from $S$ by iteratively removing all vertices with degree $1$ is a maximum stable set in $G$.   
\end{cor}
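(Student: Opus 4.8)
The plan is to apply Proposition~\ref{roman:obs} repeatedly with $\lambda = 1$. First I would invoke part (2): since $x^*$ minimizes \eqref{SkS_QUBO_Lagrangian_1} with $\lambda = 1$, we get $\Delta(S) \leq 1$, so $S$ is a disjoint union of isolated vertices and isolated edges (a matching together with isolated vertices). Consequently, the procedure ``iteratively remove all vertices of degree $1$'' deletes exactly one endpoint from each edge of $S$: once one endpoint of an edge $\{u,w\}$ is removed, the other drops to degree $0$ in the remaining induced subgraph and is never touched again, while the originally isolated vertices are never removed. Hence the terminal graph $S'$ consists of the isolated vertices of $S$ together with one endpoint of each edge of $S$; in particular $S'$ contains no edge of $G$, i.e.\ $S'$ is a stable set in $G$.

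Next I would show that the incidence vector $x'$ of $S'$ is again a minimizer of \eqref{SkS_QUBO_Lagrangian_1} for $\lambda = 1$. This follows by induction on the number of deletion steps: each deleted vertex $v$ has $d(v) = 1 = \lambda$ in the current induced subgraph, so the chain \eqref{eq:Prop_8_2} in the proof of Proposition~\ref{roman:obs}(3) becomes a chain of equalities, which means the objective value of \eqref{SkS_QUBO_Lagrangian_1} is unchanged by the deletion; thus the incidence vector after each step is still optimal, and Proposition~\ref{roman:obs} may be re-applied to it. After all degree-$1$ vertices have been removed, $x'$ is a minimizer.

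Finally I would combine optimality of $x'$ with the fact that $S'$ is stable, hence $\frac12 x'^{T} A x' = 0$, so the value of \eqref{SkS_QUBO_Lagrangian_1} at $x'$ equals $-e^{T} x' = -\vert V(S') \vert$. Testing against the incidence vector of an arbitrary maximum stable set, whose objective value is $-\alpha(G)$, optimality of $x'$ forces $-\vert V(S')\vert \le -\alpha(G)$, i.e.\ $\vert V(S')\vert \ge \alpha(G)$; and since $S'$ is stable, $\vert V(S')\vert \le \alpha(G)$. Therefore $\vert V(S')\vert = \alpha(G)$, so $S'$ is a maximum stable set. (Alternatively, one may simply invoke the identity that the optimal value of \eqref{SkS_QUBO_Lagrangian_1} at $\lambda = 1$ equals $-\alpha(G)$, cited from \cite[Lemma 2]{Krpan2024}.)

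The main difficulty is bookkeeping rather than depth: one must check that the ``iteratively remove all degree-$1$ vertices'' procedure is well defined and terminates in the claimed stable set independently of the order of removals --- which is clean precisely because $\Delta(S) \leq 1$ makes $S$ a matching plus isolated vertices --- and that optimality of the incidence vector is genuinely preserved at \emph{every single step}, not merely that each intermediate graph $S - v$ is a sparsest subgraph of its own size. Both points are settled by reusing the equality case in the proof of Proposition~\ref{roman:obs}(3).
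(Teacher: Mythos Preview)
Your proof is correct and follows essentially the same approach as the paper's: apply Proposition~\ref{roman:obs}(2) to see that $S$ is a matching plus isolated vertices, argue that removing one endpoint of each edge preserves the objective value of~\eqref{SkS_QUBO_Lagrangian_1} (the paper states this directly rather than by induction on steps), and then compare against the incidence vector of a maximum stable set to conclude. Your write-up is a bit more careful than the paper's about the well-definedness of the iterative deletion and the step-by-step preservation of optimality, but the underlying argument is the same.
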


\begin{proof}
If $S$ is already a stable  set, then we are done. Otherwise, 
the second statement  of  Proposition \ref{roman:obs} implies that $\Delta(S) = 1$. Therefore, $S$ is a graph with  some isolated vertices (of degree 0) and with $|E^*|$ isolated edges. 
Therefore, if we remove from $S$ one end vertex for each edge from $E^*$, we get a graph $S'$ with $|V^*|-|E^*|$ vertices and no edge.
Therefore, the vertices in $S'$ are stable set in $G$. 
Let $\hat x$ be the incidence vector representing these vertices.
Then  $\hat x$ yields the same objective value of ~\eqref{SkS_QUBO_Lagrangian_1} as $x^*$, as removal of one end vertex from $S$  decreases both part of the objective function by 1.
The resulting set of vertices is a maximum stable set. Otherwise, any larger stable set would imply an incidence vector with lower objective value compared to $x^*$, which is not possible, since $x^*$ is optimum.
\end{proof}

\begin{lem}\label{complete_graph}
Let $G = (V, E)$ be a complete graph on $n$ vertices and let $k \leq n - 1$. Then setting $\lambda = k$ in the formulation~\eqref{SkS_QUBO_Lagrangian_1} yields an optimal solution $x^*$ that is the incidence vector of a sparsest $k$-subgraph or sparsest $(k+1)$-subgraph in $G$.
\end{lem}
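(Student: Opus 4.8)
The plan is to exploit the fact that for a complete graph the Lagrangian objective in~\eqref{SkS_QUBO_Lagrangian_1} depends on the binary vector $x$ only through its cardinality $\ell := e^T x$. Indeed, any $\ell$ vertices of $K_n$ induce a copy of $K_\ell$ with exactly $\binom{\ell}{2}$ edges, so $\frac12 x^T A x = \binom{\ell}{2} = m_\ell$ whenever $e^T x = \ell$. Hence the objective of~\eqref{SkS_QUBO_Lagrangian_1} at such an $x$ equals
\[
f(\ell) := \binom{\ell}{2} - \lambda \ell = \frac{\ell^2}{2} - \Bigl(\lambda + \tfrac12\Bigr)\ell ,
\]
and minimizing~\eqref{SkS_QUBO_Lagrangian_1} over $\{0,1\}^n$ reduces to minimizing the one-variable convex quadratic $f$ over the integers $\ell \in \{0,1,\dots,n\}$.

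Next I would substitute $\lambda = k$. The (real) vertex of $f$ sits at $\ell = \lambda + \tfrac12 = k + \tfrac12$, exactly midway between the consecutive integers $k$ and $k+1$. A short computation gives $f(\ell) - f(k) = \tfrac12 (\ell-k)(\ell-k-1)$, which vanishes for $\ell \in \{k,k+1\}$ and is strictly positive for every other integer $\ell$. Since $k \le n-1$ guarantees $k+1 \le n$, both cardinalities $\ell = k$ and $\ell = k+1$ are attainable by some $x \in \{0,1\}^n$, so the minimizers of~\eqref{SkS_QUBO_Lagrangian_1} are precisely the vectors $x$ with $e^T x = k$ together with those with $e^T x = k+1$.

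Finally I would conclude: if $x^*$ is optimal for~\eqref{SkS_QUBO_Lagrangian_1}, then $e^T x^* \in \{k, k+1\}$, and in either case $x^*$ induces the complete subgraph $K_{e^T x^*}$. Because in $K_n$ every induced subgraph on a fixed number of vertices has the same (hence minimum) number of edges, $x^*$ is automatically the incidence vector of a sparsest $(e^T x^*)$-subgraph of $G$; this also follows directly from part~(1) of Proposition~\ref{roman:obs}. Thus $x^*$ is the incidence vector of a sparsest $k$-subgraph or of a sparsest $(k+1)$-subgraph, as claimed. There is no genuine obstacle in the argument; the only points needing care are the elementary algebra that locates the vertex of $f$ at $k+\tfrac12$ (so that the two integer minimizers tie) and the remark that $k \le n-1$ makes cardinality $k+1$ feasible in an $n$-vertex graph.
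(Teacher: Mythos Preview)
Your proof is correct and follows essentially the same approach as the paper: both reduce the problem to minimizing the one-variable function $f(\ell)=\binom{\ell}{2}-k\ell$ over $\ell\in\{0,\dots,n\}$ and show the minimum is attained precisely at $\ell\in\{k,k+1\}$. Your factorization $f(\ell)-f(k)=\tfrac12(\ell-k)(\ell-k-1)$ handles both the $\ell>k$ and $\ell<k$ cases in one stroke, whereas the paper treats $e^T\tilde x=k+i$ and $e^T\tilde x=k-j$ separately, but the underlying argument is the same.
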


\begin{proof}
Since we deal with a complete graph, we know that any subgraph on $k$ vertices is a sparsest $k$-subgraph in $G$. 
Thus, we have to show $e^Tx^*=k$. 
Note that, for any $k$-subgraph of $G$ is the  objective value of
\eqref{SkS_QUBO_Lagrangian_1} equal to 
$\frac{k(k-1)}{2} - k^2$, so we will prove that for any solution $\tilde x$ with $e^T\tilde x\neq k$, the objective value is larger than the objective value corresponding to any $k$-subset.

Let $\tilde{x} \in \{0, 1\}^n$ with $e^T\tilde{x} = k + i$ for some $i \in \{1, \ldots, n - k\}$. We have:
\begin{align}\label{eq:ineq1}
\frac{1}{2}\tilde x^{T} A \tilde x - k e^T\tilde x &=  \frac{(k+i)(k + i - 1)}{2} - k(k+i) \ge  
\frac{k(k-1)}{2} - k^2,
\end{align}
for all $i \in \{1, \ldots, n - k\}$. The last inequality is a simple calculus task. We can observe that this inequality becomes an equality only if $i=0,1$, hence we must have  $e^T x^*=k$ or $e^T x^*=k+1$, since otherwise any $k$ or $(k+1)$ subgraph would yield a better solution.
Similarly, if  $e^T\tilde x = k - j$ for some $j \in \{1, \ldots, k - 1\}$, we have 
\begin{align*}
\frac{1}{2}\tilde x^{T} A \tilde x - k e^T\tilde x =
 \frac{(k - j)(k - j - 1)}{2} - k(k - j) \ge \frac{k(k-1)}{2} - k^2
\end{align*}
for all $j \in \{1, \ldots, k - 1\}$.

Again, the last inequality is a straightforward calculation, and it is an equality only if $j=0$, hence we must have $e^T x^*=k$, since otherwise any $k$-subgraph would yield a better solution of ~\eqref{SkS_QUBO_Lagrangian_1}.

\end{proof}
Note that Lemma~\ref{complete_graph} trivially holds also for $k\in \{0,n\}$. 

\begin{thm}\label{roman:existence}
Let  $G$ be a graph with $n$ vertices and  $1\le k\le n-1$. Let us define 
$$
A_k := \max_{k^-<k}\left\{ \frac{m_{k}-m_{k^-}}{k-k^-}\right\}, B_k := \min_{k^+>k}\left\{\frac{m_{k^+}-m_k}{k^+-k}\right\}.
$$\\
\\
Then the following is true:
\begin{itemize}
    \item [(i)] If $x_k$ is an incidence vector of a sparsest $k$-subgraph in $G$, then  
    $x_k$ minimizes ~\eqref{SkS_QUBO_Lagrangian_1} with some $\lambda$ if and only if 
 $ A_k\leq \lambda \leq B_k$.
   \item[(ii)]
   Suppose $A_k<B_k$. 
   If $x$ is an optimum solution for ~\eqref{SkS_QUBO_Lagrangian_1} with some $\lambda \in (A_k,B_k)$, then $x$ is an incidence vector of a sparsest $k$-subgraph in $G$.
\end{itemize}

\end{thm}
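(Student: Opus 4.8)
The plan is to reduce \eqref{SkS_QUBO_Lagrangian_1} to an elementary statement about the finite sequence $(\ell, m_\ell)_{0\le\ell\le n}$ by minimizing the objective ``cardinality by cardinality''. The objective $\tfrac12 x^T A x - \lambda e^T x$ depends on $x\in\{0,1\}^n$ only through $\ell = e^T x$ and through the induced edge count $\tfrac12 x^T A x$, and by definition of $m_\ell$ one has $\tfrac12 x^T A x \ge m_\ell$ whenever $e^T x = \ell$, with equality precisely when $x$ induces a sparsest $\ell$-subgraph. Hence, using the convention $m_0 = 0$,
\[
\min_{x\in\{0,1\}^n}\Bigl(\tfrac12 x^T A x - \lambda e^T x\Bigr) \;=\; \min_{0\le \ell\le n}\bigl(m_\ell - \lambda\ell\bigr),
\]
and any minimizer $x$ with $e^T x = \ell$ attains the inner minimum, so it induces a sparsest $\ell$-subgraph. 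Geometrically, $A_k$ is the largest slope of a chord from $(k^-,m_{k^-})$ to $(k,m_k)$ with $k^-<k$, and $B_k$ the smallest slope of a chord from $(k,m_k)$ to $(k^+,m_{k^+})$ with $k^+>k$, so $A_k\le\lambda\le B_k$ is exactly the statement that the line through $(k,m_k)$ with slope $\lambda$ supports the sequence from below.

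For part (i): by the displayed identity, a sparsest $k$-subgraph $x_k$ minimizes \eqref{SkS_QUBO_Lagrangian_1} for a given $\lambda$ if and only if $m_k-\lambda k \le m_\ell-\lambda\ell$ for every $\ell\in\{0,\dots,n\}$. Splitting into $\ell = k^- < k$ and $\ell = k^+ > k$ and rearranging gives $\lambda \ge (m_k-m_{k^-})/(k-k^-)$ for all $k^-<k$ and $\lambda \le (m_{k^+}-m_k)/(k^+-k)$ for all $k^+>k$; taking the maximum over $k^-$ and the minimum over $k^+$ turns this into $A_k\le\lambda\le B_k$, which proves (i).

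For part (ii): assume $A_k<B_k$, fix $\lambda\in(A_k,B_k)$, let $x$ be an optimal solution of \eqref{SkS_QUBO_Lagrangian_1}, and put $\ell = e^T x$. Comparing $x$ with a sparsest $k$-subgraph $x_k$ yields $m_\ell-\lambda\ell \le \tfrac12 x^T A x - \lambda\ell \le \tfrac12 x_k^T A x_k - \lambda k = m_k - \lambda k$. If $\ell<k$ this rearranges to $\lambda \le (m_k-m_\ell)/(k-\ell)\le A_k$, contradicting $\lambda>A_k$; if $\ell>k$ it rearranges to $\lambda \ge (m_\ell-m_k)/(\ell-k)\ge B_k$, contradicting $\lambda<B_k$. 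Hence $\ell = k$, and then Lemma~\ref{lem:optimality_test_LR} shows $x$ is optimal for \eqref{SkS_problem_standard_formulation}, i.e.\ $x$ is the incidence vector of a sparsest $k$-subgraph.

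The argument is essentially bookkeeping once the ``minimize over the cardinality first'' identity is in place; its only genuine ingredient is the equality case---any optimal $x$ of cardinality $\ell$ already induces a sparsest $\ell$-subgraph---which is immediate from the definition of $m_\ell$. The main obstacle, such as it is, is the treatment of the extreme cardinalities $\ell=0$ and $\ell=n$: in the ``only if'' direction of (i) and throughout (ii) one must include $m_0=0$ (the case $k^-=0$ in the definition of $A_k$) among the comparison values, while the hypothesis $k\le n-1$ guarantees that some $k^+>k$ exists, so that $B_k$ is well defined; no sign restriction on $\lambda$ is needed, since $m_\ell$ being nondecreasing forces $A_k\ge 0$ and hence $\lambda\ge 0$ for every admissible $\lambda$.
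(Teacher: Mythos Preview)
Your proof is correct and follows essentially the same approach as the paper: both arguments compare $m_k-\lambda k$ against $m_\ell-\lambda\ell$ for $\ell\neq k$ and rearrange to the slope conditions defining $A_k$ and $B_k$. Your explicit ``minimize over cardinality first'' identity and the supporting-line interpretation make the argument more transparent, but the underlying mechanics---including the appeal to Lemma~\ref{lem:optimality_test_LR} (equivalently Proposition~\ref{roman:obs}(1)) once $e^Tx=k$ is established---are the same as in the paper.
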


\begin{proof}
We first prove (i).
Assume $x_k$  is an incidence vector of a sparsest $k$-subgraph in $G$ that minimizes ~\eqref{SkS_QUBO_Lagrangian_1} for some $\lambda$. Then we have:
\begin{align*}
m_k -\lambda k \le & ~ m_{k^-} - \lambda k^-,\quad \forall k^-<k,\\
m_k -\lambda k  \le & ~ m_{k^+} - \lambda k^+,\quad \forall k^+>k,     
\end{align*}
since for all $k^-$ and $k^+$ sparsest subgraphs 
containing $m_{k^-}$ and $m_{k^+}$ edges, respectively, give upper bounds for the optimum value of ~\eqref{SkS_QUBO_Lagrangian_1}.

We can rewrite the above equalities as:
$$
\frac{m_{k}-m_{k^-}}{k-k^-}\leq\lambda\leq \frac{m_{k^+}-m_k}{k^+-k}, \quad \forall k^-<k,\forall  k^+>k,
$$
hence $A_k\leq \lambda \leq B_k.$

This proves the ``only if'' part of equivalence in (i).
To prove the ``if part'' of this equivalence, let us assume that $ A_k\leq \lambda \leq B_k$ and $x_k$ is not optimum for ~\eqref{SkS_QUBO_Lagrangian_1}.
Therefore, there exists optimum solution  $\tilde x$ with $e^T\tilde x \neq k$. Without loss of generality, we can assume that    $e^T\tilde x=k_+>k$.
It follows that
$$m_k-\lambda k > m_{k_+}-\lambda k_+.$$
We can rewrite this as 
$ \lambda > \frac{m_{k_+}-m_k}{k_+-k}\ge B_k$, a contradiction.

To prove (ii), suppose  that $x$   is an optimum for ~\eqref{SkS_QUBO_Lagrangian_1} with 
$\lambda \in (A_k,B_k)$.
 If $e^Tx=k$, then the claim follows by Proposition \ref{roman:obs} (here we use a trivial observation that $A_k\ge 0$).
 If $e^Tx=\bar k\neq k$, we can assume without loss of generality that
  $\bar k > k$. 
Then  it follows:
    $\frac{1}{2}x^TAx-\lambda\bar k  \le  m_k-\lambda k$, therefore 
  $$\lambda \ge  \frac{\frac{1}{2}x_k^TAx_k-m_k}{\bar k-k}\ge \frac{m_{\bar k}-m_k}{\bar k - k} \ge B_k,$$
  a contradiction.
\end{proof}

Now we consider what happens if the conditions from Theorem \ref{roman:existence} are not satisfied.

\begin{lem} \label{lem: A_k_>B_k}
    Let $G$ be a graph on $n$ vertices, let $k \leq n - 1$, and suppose that $A_k > B_k$. Then, for any value of $\lambda$, the optimum solution of ~\eqref{SkS_QUBO_Lagrangian_1} will not have cardinality $k$.
\end{lem}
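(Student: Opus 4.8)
The plan is to prove the contrapositive statement directly: assuming $A_k > B_k$, show that any optimal solution $x$ of \eqref{SkS_QUBO_Lagrangian_1} must have cardinality $\bar{k} \neq k$. Since the feasible region is finite, an optimum exists for every $\lambda$. I would start by fixing an arbitrary $\lambda \in \mathbb{R}$ and an optimum $x$ of \eqref{SkS_QUBO_Lagrangian_1}, then argue by contradiction: suppose $e^T x = k$. By Proposition~\ref{roman:obs}(1), $x$ is then the incidence vector of a sparsest $k$-subgraph, so $\frac{1}{2} x^T A x = m_k$. Now the ``only if'' direction of Theorem~\ref{roman:existence}(i) applies verbatim: an incidence vector of a sparsest $k$-subgraph that minimizes \eqref{SkS_QUBO_Lagrangian_1} for some $\lambda$ forces $A_k \leq \lambda \leq B_k$. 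But $A_k > B_k$ makes the interval $[A_k, B_k]$ empty, so no such $\lambda$ exists --- contradiction. Hence $e^T x \neq k$.

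The bookkeeping to carry this out cleanly is to restate the core inequalities from the proof of Theorem~\ref{roman:existence}(i): for any $k^- < k$ we have $m_k - \lambda k \le m_{k^-} - \lambda k^-$ (because a sparsest $k^-$-subgraph is a feasible point of \eqref{SkS_QUBO_Lagrangian_1} with objective $m_{k^-} - \lambda k^-$), which rearranges to $\lambda \ge \frac{m_k - m_{k^-}}{k - k^-}$; taking the maximum over $k^- < k$ gives $\lambda \ge A_k$. Symmetrically, comparison with sparsest $k^+$-subgraphs for $k^+ > k$ gives $\lambda \le \frac{m_{k^+} - m_k}{k^+ - k}$ for all such $k^+$, hence $\lambda \le B_k$. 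Together $A_k \le \lambda \le B_k$, contradicting $A_k > B_k$. One edge case to handle: when $k = 1$ there is no $k^- < k$ with the right meaning (only $k^- = 0$, which is fine since $m_0 = 0$), and when $k = n-1$ there is still $k^+ = n$ available, so the maxima and minima defining $A_k, B_k$ are over nonempty sets whenever $1 \le k \le n-1$; this should be noted so the quantities are well-defined.

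I do not expect a serious obstacle here --- the lemma is essentially a one-line corollary of Theorem~\ref{roman:existence}(i), since that theorem already establishes that membership of a sparsest $k$-subgraph's incidence vector in the optimal set of \eqref{SkS_QUBO_Lagrangian_1} is \emph{equivalent} to $A_k \le \lambda \le B_k$, and Proposition~\ref{roman:obs}(1) guarantees that \emph{any} optimum of cardinality $k$ is such an incidence vector. The only mild subtlety is making sure the logical chain is airtight: an optimum of cardinality $k$ need not a priori be known to be a sparsest $k$-subgraph, but Proposition~\ref{roman:obs}(1) supplies exactly that, after which Theorem~\ref{roman:existence}(i) closes the argument. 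I would therefore present the proof in three short steps: (1) an optimum $x$ exists; (2) if $e^T x = k$, then $x$ induces a sparsest $k$-subgraph by Proposition~\ref{roman:obs}(1); (3) then Theorem~\ref{roman:existence}(i) forces $A_k \le \lambda \le B_k$, impossible. If a self-contained proof is preferred over citing Theorem~\ref{roman:existence}(i), I would instead inline the two inequality derivations sketched in the previous paragraph, which adds only a few lines.
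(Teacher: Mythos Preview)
Your proposal is correct and essentially identical to the paper's proof: the paper argues by contradiction, assuming an optimum $x^*$ of cardinality $k$ exists for some $\lambda$, then derives the inequalities $A_k \le \lambda$ and $\lambda \le B_k$ exactly as you inline them in your second paragraph, concluding $A_k \le B_k$. One small remark: your step~(2) cites Proposition~\ref{roman:obs}(1), which is stated only for $\lambda \in [0,\infty)$; to cover all $\lambda \in \mathbb{R}$ you should cite Lemma~\ref{lem:optimality_test_LR} instead (or simply inline the comparison inequalities as you already propose, which is what the paper does and which needs no sign restriction on $\lambda$).
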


\begin{proof}
    Assume that there exists a $\lambda$, for which the optimum solution of ~\eqref{SkS_QUBO_Lagrangian_1}, which we denote by $x^*$, has cardinality $k$. Let $m_k$ be the number of edges in the sparsest $k$-subgraph corresponding to $x^*$.
    
    This implies that $\forall k^- < k$ we have the following equivalent inequalities:
    \begin{align*}
        m_k - \lambda k &\le m_{k^-} - \lambda k^-  \\
        m_k - m_{k^-} &\le \lambda (k - k^-) \\
        \frac{m_k - m_{k^-}}{k - k^-} & \le \lambda .
    \end{align*}
    Therefore, it follows $ \max_{k^- < k} \left( \frac{m_k - m_{k^-}}{k - k^-} \right) = A_k \le \lambda$. 
    
    Similarly,  $\forall k^+ > k$ we have
    \begin{align*}
        m_k - \lambda k &\le m_{k^+} - \lambda k^+  \\
        \lambda (k^+ - k) &\le m_{k^+} - m_k \\
        \lambda &\le \frac{m_{k^+} - m_k}{k^+ - k} .
    \end{align*}

    This implies that $ \lambda \le \min_{k^+ > k} \left( \frac{m_{k^+} - m_k}{k^+ - k} \right) = B_k$. If we combine both results, we get  $A_k \le \lambda \le B_k$, hence $ A_k \le B_k$, a contradiction. Hence, there does not exist any $\lambda$ for which an optimal solution of ~\eqref{SkS_QUBO_Lagrangian_1} has  cardinality $k$.
\end{proof}

We now show that, for certain instances, it is possible to select tighter bounds on $\lambda$. 
To this end, we revisit 
$\{\diff_\ell\}$, introduced in Section~\ref{section_SkS_QUBO_exact_penalty}, and demonstrate that if $\{\diff_\ell\}$ is monotonically increasing in a given graph, then the formula for  $A_k$ and $B_k$ from Theorem \ref{roman:existence} can be simplified.
  Additionally, we can choose $\lambda$ to be an integer, further emphasizing the practical effectiveness of Lagrangian relaxation for this problem.\\

\begin{cor} \label{cor: diff_k>diff_k_plus_1}
    Let $G$ be a graph on $n$ vertices, let $k \leq n - 1$, and suppose that $\diff_k > \diff_{k+1}$ for some $k$. Then for any value of $\lambda$, the optimum solution of ~\eqref{SkS_QUBO_Lagrangian_1} will not have cardinality $k$.
\end{cor}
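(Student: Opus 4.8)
The plan is to show that $\diff_k > \diff_{k+1}$ implies $A_k > B_k$, and then invoke Lemma~\ref{lem: A_k_>B_k} directly. Recall that
$$
A_k = \max_{k^-<k}\left\{\frac{m_k-m_{k^-}}{k-k^-}\right\}, \qquad B_k = \min_{k^+>k}\left\{\frac{m_{k^+}-m_k}{k^+-k}\right\}.
$$
The key observation is that both of these are optima over many indices, so it suffices to exhibit one admissible term in the $\max$ that already exceeds one admissible term in the $\min$. The natural candidates are $k^- = k-1$ and $k^+ = k+1$: the corresponding term in $A_k$ is $\frac{m_k - m_{k-1}}{1} = \diff_k$, and the corresponding term in $B_k$ is $\frac{m_{k+1}-m_k}{1} = \diff_{k+1}$.

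First I would note that $k^- = k-1$ is an admissible index in the maximization defining $A_k$ (since $k-1 < k$ and $k-1 \geq 1$ because $k \geq 2$ is forced — if $k=1$ there is no $\diff_k$ with the stated meaning, or one checks the degenerate case separately), so $A_k \geq \diff_k$. Likewise $k^+ = k+1$ is admissible in the minimization defining $B_k$ (since $k+1 > k$ and $k+1 \leq n$ because $k \leq n-1$), so $B_k \leq \diff_{k+1}$. Combining,
$$
A_k \geq \diff_k > \diff_{k+1} \geq B_k,
$$
so $A_k > B_k$. Then Lemma~\ref{lem: A_k_>B_k} applies verbatim: for any $\lambda$, the optimum solution of \eqref{SkS_QUBO_Lagrangian_1} does not have cardinality $k$.

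There is essentially no obstacle here; the statement is an immediate corollary of Lemma~\ref{lem: A_k_>B_k} once one recognizes $\diff_k$ and $\diff_{k+1}$ as particular terms inside the $\max$ and $\min$. The only point requiring a word of care is the edge case $k=1$: the hypothesis $\diff_k > \diff_{k+1}$ with $\diff_1 = 0$ would read $0 > \diff_2 = m_2 \geq 0$, which is impossible, so the statement is vacuously true there and no separate argument is needed. I would also remark, for context, that this corollary is the natural counterpart of Corollary~\ref{cor: diff_k>diff_k_plus_1}'s companion result (when $\{\diff_\ell\}$ is monotonically increasing the bounds simplify to $\diff_k \leq \lambda \leq \diff_{k+1}$), showing that the monotonicity hypothesis there is exactly what rules out the pathological case treated here.
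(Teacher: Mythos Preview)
Your proof is correct and follows exactly the same approach as the paper: you observe that $\diff_k \le A_k$ and $B_k \le \diff_{k+1}$ (since these are particular terms in the $\max$ and $\min$), chain the inequalities to get $A_k > B_k$, and invoke Lemma~\ref{lem: A_k_>B_k}. Your additional handling of the edge case $k=1$ is a nice touch not spelled out in the paper.
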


\begin{proof}
    By assumption we  have $\diff_{k} > \diff_{k+1}$. Since $\diff_{k} \le A_{k}$ and $B_{k} \le \diff_{k+1}$, this implies $A_{k} > B_{k}$. Hence from Lemma \ref{lem: A_k_>B_k}, there does not exist any $\lambda$ for which the optimum solution of ~\eqref{SkS_QUBO_Lagrangian_1} will have cardinality $k$.
\end{proof}

\begin{lem} \label{lem: MIS_for_lam}
    Let $G$ be a graph on $n$ vertices, and let $\alpha$ be the size of the maximum stable set of $G$. Let $m_{\alpha+1}$ be the number of edges in the sparsest $k$-subgraph for $k = \alpha+1$. Then, for $\lambda < m_{\alpha + 1}$, the optimum solution of ~\eqref{SkS_QUBO_Lagrangian_1} will correspond to a maximum stable set.
\end{lem}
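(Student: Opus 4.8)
The plan is to show that, for $\lambda$ in the stated range, the incidence vectors of the maximum stable sets of $G$ are precisely the minimizers of~\eqref{SkS_QUBO_Lagrangian_1}. (I will tacitly assume $\lambda\ge 0$, as in Proposition~\ref{roman:obs}; the value $\lambda=0$ is degenerate, since then every stable set is optimal, so one reads the statement as ``a maximum stable set is among the optima'', and for $0<\lambda<m_{\alpha+1}$ I will get uniqueness. Note $m_{\alpha+1}\ge 1$, since any $(\alpha+1)$-set induces an edge.) The incidence vector $x_I$ of a maximum stable set $I$ has $\frac12 x_I^T A x_I=0$ and $e^T x_I=\alpha$, hence objective value $-\lambda\alpha$. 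For an arbitrary $x\in\{0,1\}^n$ with $\ell:=e^Tx$, the objective value in~\eqref{SkS_QUBO_Lagrangian_1} is at least $m_\ell-\lambda\ell$.

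It therefore suffices to check that $m_\ell-\lambda\ell$ beats $-\lambda\alpha$ away from maximum stable sets:
\begin{itemize}
\item[(a)] if $\ell<\alpha$, then $m_\ell=0$, so $m_\ell-\lambda\ell=-\lambda\ell>-\lambda\alpha$ when $\lambda>0$;
\item[(b)] if $\ell=\alpha$, then $\frac12 x^T A x-\lambda\alpha\ge-\lambda\alpha$, with equality iff $\frac12 x^T A x=0$, i.e.\ iff $x$ is the incidence vector of a maximum stable set;
\item[(c)] if $\ell>\alpha$, I need $m_\ell-\lambda\ell>-\lambda\alpha$, i.e.\ $m_\ell>\lambda(\ell-\alpha)$; since $\lambda<m_{\alpha+1}$ and $\ell-\alpha\ge 1$, this follows once we know $m_\ell\ge(\ell-\alpha)\,m_{\alpha+1}$.
\end{itemize}
Thus everything reduces to the inequality $m_\ell\ge(\ell-\alpha)\,m_{\alpha+1}$ for all $\alpha\le\ell\le n$ (equivalently, in the notation of Theorem~\ref{roman:existence}, $B_\alpha=m_{\alpha+1}$). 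Establishing this inequality is the key step and the main obstacle.

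For it, I would prove the stronger structural fact that some sparsest $\ell$-subgraph of $G$ contains the vertex set of a maximum stable set $I$. Granting this, every vertex $v\notin I$ has at least $m_{\alpha+1}$ neighbours in $I$ (apply the definition of $m_{\alpha+1}$ to $I\cup\{v\}$, whose only edges go from $v$ to $I$); so a sparsest $\ell$-subgraph, consisting of $I$ together with $\ell-\alpha$ further vertices, has at least $(\ell-\alpha)\,m_{\alpha+1}$ edges, giving $m_\ell\ge(\ell-\alpha)\,m_{\alpha+1}$. To prove the structural fact I would take, among all pairs $(I,W)$ with $I$ a maximum stable set and $W$ the vertex set of a sparsest $\ell$-subgraph, one maximizing $|I\cap W|$, and show $I\subseteq W$ by a swap argument: if some $u\in I\setminus W$ exists, replace a suitable $v\in W\setminus I$ by $u$; since each $v\in W\setminus I$ has at least $m_{\alpha+1}-|I\setminus W|$ neighbours in $I\cap W\subseteq W$, hence reasonably large degree in $G[W]$, and since $\max_{v\in W\setminus I} d_{G[W]}(v)\ge \frac{m_\ell}{|W\setminus I|}$, one can pick $v$ so that the swap does not increase the number of edges while $|I\cap W|$ strictly increases --- contradicting maximality. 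The delicate point, where I expect the real work to lie, is checking that such a $v$ always exists: the awkward cases are when $u$ has many neighbours in $W$ and when $W$ meets $I$ in few vertices, and these would need extra care (e.g.\ also allowing the removal of high-degree non-neighbours of $u$, or choosing the pair $(I,W)$ more cleverly). With the inequality in hand, case (c) is impossible, so any optimal $x^*$ has $e^Tx^*=\alpha$ and induces no edge, i.e.\ is the incidence vector of a maximum stable set.
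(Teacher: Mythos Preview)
Your reduction to the inequality $m_\ell \ge (\ell-\alpha)\,m_{\alpha+1}$ for all $\ell > \alpha$ (equivalently $B_\alpha = m_{\alpha+1}$) is exactly right, and you correctly flag the swap argument as the delicate point. Unfortunately, this inequality is \emph{false in general}, so no swap argument can be completed. The paper's own Table~\ref{tab: LR_exact} furnishes a counterexample: for $DW_{75}$ one has $\alpha=25$, $\diff_{\alpha+1}=4$ and $\diff_{\alpha+2}=1$, hence $m_{26}=4$ and $m_{27}=5<8=2\,m_{26}$. Consequently $B_\alpha\le 5/2 < 4 = m_{\alpha+1}$, and for $\lambda=3\in(B_\alpha,m_{\alpha+1})$ the sparsest $27$-subgraph has objective value $5-3\cdot 27=-76$, strictly smaller than $-3\alpha=-75$ achieved by any maximum stable set. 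Thus the optimum of~\eqref{SkS_QUBO_Lagrangian_1} at $\lambda=3$ is not a maximum stable set, and the lemma as stated is false. This also refutes your structural claim that some sparsest $\ell$-subgraph contains a maximum stable set: were it true for $DW_{75}$ with $\ell=27$, your edge count would force $m_{27}\ge 2\,m_{26}=8$.

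The paper's own proof makes the same mistake in a different guise: from $B_\alpha\le m_{\alpha+1}$ it writes $(A_\alpha,B_\alpha)=(0,m_{\alpha+1})$ and invokes Theorem~\ref{roman:existence}, but the inclusion goes the wrong way --- Theorem~\ref{roman:existence} only gives the conclusion for $\lambda\in(0,B_\alpha)$, which in general is a proper subinterval of $(0,m_{\alpha+1})$. The two intervals coincide precisely when the inequality you isolated holds, for instance when $\{\diff_\ell\}$ is monotonically increasing (Lemma~\ref{lambda_tighter_bounds}). So your instinct that the ``real work'' lies in the swap was correct; the issue is that the work cannot be done without an extra hypothesis of this kind.
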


\begin{proof}
We have $A_{\alpha} = 0$ and $B_{\alpha} \le \diff_{\alpha+1} = m_{\alpha+1}$. Also, $A_{\alpha} < B_{\alpha}$. Hence, according to Theorem \ref{roman:existence}, for $\lambda \in (A_{\alpha}, B_{\alpha}) = (0,m_{\alpha+1})$, the optimum solution of ~\eqref{SkS_QUBO_Lagrangian_1} will correspond to a maximum stable set.
\end{proof}

Note that Lemma \ref{lem: MIS_for_lam} can be considered as a generalization of  Lemma \ref{stable_set_lagrangian}.

\begin{lem}\label{lambda_tighter_bounds}
Let $G$ be a graph on $n$ vertices, let $k \leq n - 1$, and suppose that $\{\diff_\ell\}$ is monotonically increasing in $G$. Then, 
$$
A_k = \diff_k \le \diff_{k+1}  = B_k.
$$
\end{lem}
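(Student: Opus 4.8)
The plan is to establish the two outer equalities $A_k=\diff_k$ and $B_k=\diff_{k+1}$ independently; once these are in hand, the middle inequality $\diff_k\le\diff_{k+1}$ is nothing more than the defining property of a monotonically increasing sequence $\{\diff_\ell\}$, and it is meaningful precisely because $k\le n-1$ ensures that $\diff_{k+1}$ exists. The single tool I would use is the telescoping identity
\begin{align*}
m_k-m_{k^-}=\sum_{\ell=k^-+1}^{k}\diff_\ell,
\end{align*}
valid for any $k^-<k$, which exhibits $\frac{m_k-m_{k^-}}{k-k^-}$ as the arithmetic mean of the $k-k^-$ consecutive terms $\diff_{k^-+1},\dots,\diff_k$.

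For $A_k=\diff_k$: since $\{\diff_\ell\}$ is non-decreasing, $\diff_k$ is the largest of the terms $\diff_{k^-+1},\dots,\diff_k$, so their mean is at most $\diff_k$; taking the maximum over $k^-<k$ gives $A_k\le\diff_k$. The reverse inequality is obtained from the choice $k^-=k-1$, for which the mean collapses to the single term $\diff_k$, so $A_k\ge\diff_k$. Hence $A_k=\diff_k$. Symmetrically, the same telescoping applied to $k^+>k$ writes $\frac{m_{k^+}-m_k}{k^+-k}$ as the mean of $\diff_{k+1},\dots,\diff_{k^+}$, whose smallest term is $\diff_{k+1}$ by monotonicity; this gives $B_k\ge\diff_{k+1}$, while the choice $k^+=k+1$ gives $B_k\le\diff_{k+1}$, so $B_k=\diff_{k+1}$.

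The only bookkeeping to watch is the range convention for $k^-$ and $k^+$ (whether $k^-=0$ with $m_0=0$ is permitted, which is harmless since then the relevant mean includes $\diff_1=0\le\diff_k$) and the non-emptiness of the sums, which is guaranteed by $1\le k\le n-1$. I do not anticipate any genuine obstacle here: the entire content is the mean-of-consecutive-differences observation together with the monotonicity hypothesis, and in fact the same computation simultaneously re-proves the one-sided bounds $\diff_k\le A_k$ and $B_k\le\diff_{k+1}$ that were invoked earlier in Corollary~\ref{cor: diff_k>diff_k_plus_1} and Lemma~\ref{lem: MIS_for_lam}.
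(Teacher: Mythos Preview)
Your proposal is correct and follows essentially the same approach as the paper: both arguments telescope $m_k-m_{k^-}$ into the sum $\sum_{\ell=k^-+1}^{k}\diff_\ell$, bound this by $(k-k^-)\diff_k$ using monotonicity to get $A_k\le\diff_k$, and then take $k^-=k-1$ for the reverse inequality (and symmetrically for $B_k$). Your phrasing in terms of the arithmetic mean of consecutive $\diff_\ell$'s is a slightly cleaner packaging of the same computation.
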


\begin{proof}
First, we prove $
A_k = \diff_k$.

Since $\{\diff_\ell\}$ is monotonically increasing, we have for all $k^-<k$ that  
$$m_k-m_k^- \le m_k-m_{k-1}+m_{k-1}-m_{k-2}+\cdots+m_{k^-+1}-m_{k^-} \le \diff_k(k-k^-), $$
hence 
$    A_k \le  \diff_k.$
On the other hand, we know that $\diff_k = \frac{m_k-m_{k-1}}{k-(k-1)}$ and 
$A_k= \max_{k^{-}<k}\frac{m_k-m_{k^-}}{k-k^-} \ge \frac{m_k-m_{k-1}}{k-(k-1)} = \diff_k $. hence $A_k\ge \diff_k$, hence we have the equality.

Similarly, we show that $B_k = \diff_{k+1}$.
\end{proof}

The following Corollary trivially follows from Theorem \ref{roman:obs} and Lemma \ref{lambda_tighter_bounds}.
\begin{cor}\label{corollary_LR_diff_l}
Let $G$ be a graph on $n$ vertices and suppose that $\{\diff_\ell\}$ is monotonically increasing in $G$. If for some  $k \in \{1, \ldots, n-1\}$ we have $\diff_k < \diff_{k+1}$, then  any optimum solution of~\eqref{SkS_QUBO_Lagrangian_1} with $\lambda \in (\diff_k,\diff_{k+1})$ is a sparsest $k$-subgraph in $G$.
\end{cor}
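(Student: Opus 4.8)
The plan is to combine the two ingredients that are already available: Lemma~\ref{lambda_tighter_bounds}, which tells us that under the monotonicity hypothesis we have $A_k = \diff_k$ and $B_k = \diff_{k+1}$, and Theorem~\ref{roman:existence}(ii), which states that whenever $A_k < B_k$, any optimum solution of \eqref{SkS_QUBO_Lagrangian_1} with $\lambda \in (A_k, B_k)$ is an incidence vector of a sparsest $k$-subgraph in $G$. So the proof is essentially a two-line substitution.

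First I would invoke the hypothesis that $\{\diff_\ell\}$ is monotonically increasing in $G$, so that Lemma~\ref{lambda_tighter_bounds} applies and gives $A_k = \diff_k$ and $B_k = \diff_{k+1}$. Next I would use the additional assumption $\diff_k < \diff_{k+1}$, which then reads exactly as $A_k < B_k$, so that the strict-inequality hypothesis of Theorem~\ref{roman:existence}(ii) is met and the open interval $(A_k, B_k) = (\diff_k, \diff_{k+1})$ is nonempty. Then for any $\lambda$ in this interval, Theorem~\ref{roman:existence}(ii) immediately yields that any optimum solution of \eqref{SkS_QUBO_Lagrangian_1} is an incidence vector of a sparsest $k$-subgraph in $G$, which is the claim. (I would note in passing that the statement says ``Theorem~\ref{roman:obs}'' but the substantive input is really Theorem~\ref{roman:existence} together with Lemma~\ref{lambda_tighter_bounds}; Proposition~\ref{roman:obs}(1) is only what lets Theorem~\ref{roman:existence}(ii) conclude optimality in the borderline case $e^Tx=k$, and is already folded into that theorem's proof.)

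There is no real obstacle here: the corollary is a direct specialization of Theorem~\ref{roman:existence}(ii) via the simplified expressions for $A_k$ and $B_k$. The only thing to be careful about is checking that the interval is genuinely open and nonempty, which is precisely what the hypothesis $\diff_k < \diff_{k+1}$ guarantees, and that $A_k \geq 0$ (trivially true since $\diff_k = m_k - m_{k-1} \geq 0$), a point already used inside the proof of Theorem~\ref{roman:existence}. Hence the write-up is short:

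\begin{proof}
By Lemma~\ref{lambda_tighter_bounds}, the monotonicity of $\{\diff_\ell\}$ in $G$ gives $A_k = \diff_k$ and $B_k = \diff_{k+1}$. The assumption $\diff_k < \diff_{k+1}$ therefore means $A_k < B_k$, so Theorem~\ref{roman:existence}(ii) applies: for any $\lambda \in (A_k, B_k) = (\diff_k, \diff_{k+1})$, every optimum solution of~\eqref{SkS_QUBO_Lagrangian_1} is an incidence vector of a sparsest $k$-subgraph in $G$.
\end{proof}
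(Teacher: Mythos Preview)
Your proof is correct and matches the paper's approach exactly: the paper states that the corollary ``trivially follows from Theorem~\ref{roman:obs} and Lemma~\ref{lambda_tighter_bounds},'' and you have spelled out precisely that derivation (correctly identifying that the substantive input is Theorem~\ref{roman:existence}(ii), not Proposition~\ref{roman:obs}, which appears to be a typo in the paper's attribution).
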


We continue with a theorem that describes how sensitive the number of optimal solutions is to the value of $\lambda$.

\begin{thm} \label{thm: boundary_ks}
Let $G = (V, E)$ be a graph on $n$ vertices. 
Let $k_1$ and $k_2$ be  the smallest and largest integer values, respectively, such that $\alpha(G) \leq k_1< k_2 \leq n$, and the corresponding incidence vectors $x_{k_1}$ and $x_{k_2}$ with $e^Tx_{k_i}=k_i$ both minimize  ~\eqref{SkS_QUBO_Lagrangian_1} for some $\lambda \in \mathbb{R}^+$.

Then, there exist $d^- > 0$ and $d^+ > 0$ such that:  
\begin{itemize}
    \item For all $\epsilon_1 \in (0, d^-)$, every minimizer of~\eqref{SkS_QUBO_Lagrangian_1} for $\lambda - \epsilon_1$ has cardinality exactly $k_1$.
    \item For all $\epsilon_2 \in (0, d^+)$, every minimizer of~\eqref{SkS_QUBO_Lagrangian_1} for $\lambda + \epsilon_2$ has cardinality exactly $k_2$.
\end{itemize} 
\end{thm}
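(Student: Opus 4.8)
The plan is to work with the concave, piecewise-linear lower-envelope function $g(\lambda):=\min_{x\in\{0,1\}^n}\bigl(\tfrac12 x^{T}Ax-\lambda e^{T}x\bigr)$, viewed as the pointwise minimum over the finite family of affine maps $\ell_x(\lambda):=\tfrac12 x^{T}Ax-\lambda e^{T}x$, $x\in\{0,1\}^n$. Fix the value $\lambda\in\mathbb{R}^{+}$ supplied by the hypothesis, for which $x_{k_1}$ and $x_{k_2}$ both minimize~\eqref{SkS_QUBO_Lagrangian_1}, and let $\mathcal{M}:=\{x\in\{0,1\}^n:\ell_x(\lambda)=g(\lambda)\}$ be the (finite, nonempty) set of minimizers at $\lambda$. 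Then $x_{k_1},x_{k_2}\in\mathcal{M}$, and by the choice of $k_1,k_2$ we have $k_1=\min_{x\in\mathcal{M}}e^{T}x$ and $k_2=\max_{x\in\mathcal{M}}e^{T}x$ with $k_1<k_2$. Since $\{0,1\}^n$ is finite, $\delta:=\min\{\ell_x(\lambda)-g(\lambda):x\in\{0,1\}^n\setminus\mathcal{M}\}>0$ (if $\mathcal{M}=\{0,1\}^n$ the statement is trivial, so assume otherwise), and I would set $d^{-}=d^{+}:=\delta/n$.

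For the downward perturbation, note that $\ell_x(\lambda-\epsilon_1)=\ell_x(\lambda)+\epsilon_1 e^{T}x$ for every $x$. Restricted to $\mathcal{M}$ this equals $g(\lambda)+\epsilon_1 e^{T}x$, which is strictly increasing in the cardinality $e^{T}x$, so $\min_{x\in\mathcal{M}}\ell_x(\lambda-\epsilon_1)=g(\lambda)+\epsilon_1 k_1$, attained precisely by the members of $\mathcal{M}$ of cardinality $k_1$ (these exist because $x_{k_1}\in\mathcal{M}$). For $x\notin\mathcal{M}$ we have $\ell_x(\lambda-\epsilon_1)\ge\ell_x(\lambda)\ge g(\lambda)+\delta>g(\lambda)+\epsilon_1 n\ge g(\lambda)+\epsilon_1 k_1$, using $\epsilon_1<d^{-}=\delta/n$ and $k_1\le n$. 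Comparing the two estimates, every global minimizer of~\eqref{SkS_QUBO_Lagrangian_1} at $\lambda-\epsilon_1$ lies in $\mathcal{M}$ and has cardinality exactly $k_1$.

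The upward perturbation is symmetric. Here $\ell_x(\lambda+\epsilon_2)=\ell_x(\lambda)-\epsilon_2 e^{T}x$, which on $\mathcal{M}$ is strictly decreasing in the cardinality, so $\min_{x\in\mathcal{M}}\ell_x(\lambda+\epsilon_2)=g(\lambda)-\epsilon_2 k_2$, attained precisely at the cardinality-$k_2$ members of $\mathcal{M}$; while for $x\notin\mathcal{M}$, $\ell_x(\lambda+\epsilon_2)\ge g(\lambda)+\delta-\epsilon_2 n>g(\lambda)-\epsilon_2 k_2$ since $\delta>\epsilon_2(n-k_2)$ (as $\epsilon_2<\delta/n$ and $n-k_2<n$). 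Hence every minimizer at $\lambda+\epsilon_2$ has cardinality exactly $k_2$, and the theorem follows.

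I do not expect a deep obstacle: the argument is essentially the standard observation that the breakpoints of the concave function $g$ separate the $\lambda$-intervals on which a given optimal cardinality is attained. The one point requiring care is the comparison at a perturbed $\lambda$ of the two competing effects — minimizers staying inside $\mathcal{M}$ are controlled by the strict monotonicity of $\ell_x$ in the cardinality after perturbation, whereas vectors outside $\mathcal{M}$ are controlled by the fixed gap $\delta$ dominating the perturbation, which is linear in $\epsilon$. It is also worth remarking that the hypothesis $\alpha(G)\le k_1$ is not actually used here: for $\lambda>0$, any minimizer of~\eqref{SkS_QUBO_Lagrangian_1} automatically has cardinality at least $\alpha(G)$, since a stable set of size $s<\alpha(G)$ has objective value $-\lambda s>-\lambda\alpha(G)$; and if one insists that $\lambda-\epsilon_1$ remain positive, it suffices to take $d^{-}=\min\{\delta/n,\lambda\}$ instead.
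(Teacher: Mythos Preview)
Your proof is correct, and in fact cleaner than the paper's own argument, though the underlying idea is the same: perturbing $\lambda$ shifts each affine piece $\ell_x$ by an amount proportional to the cardinality $e^Tx$, so among the tied minimizers the one of smallest (resp.\ largest) cardinality wins under a downward (resp.\ upward) perturbation, while non-minimizers are kept at bay by a finite gap.

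The difference lies in the bookkeeping. The paper works only with the sequence $m_k$ (the sparsest-$k$ values), splits into the cases $k>k_1$ and $k<k_1$, and derives the explicit thresholds
\[
d^{-}=\min_{k<k_1}\Bigl(\lambda-\frac{m_{k_1}-m_k}{k_1-k}\Bigr),\qquad
d^{+}=\min_{k>k_2}\Bigl(\frac{m_k-m_{k_2}}{k-k_2}-\lambda\Bigr),
\]
so its bounds are sharp and expressed in terms of the $m_k$-sequence that features elsewhere in the paper. You instead work over all of $\{0,1\}^n$, introduce a single gap $\delta=\min_{x\notin\mathcal{M}}\bigl(\ell_x(\lambda)-g(\lambda)\bigr)$, and obtain the uniform (generally looser) bound $d^{-}=d^{+}=\delta/n$. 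What you gain is symmetry and brevity; what the paper gains is an explicit connection to the quantities $A_k$, $B_k$ appearing in Theorem~\ref{roman:existence}. Your remark that the hypothesis $\alpha(G)\le k_1$ is not actually needed for the perturbation argument is also a valid observation.
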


\begin{proof}
Since $e^Tx_{k_i}=k_i$, we have that $x_{k_1}$ and $x_{k_2}$  are incidence vectors of  sparsest  $k_1$ and $k_2$-subgraphs, respectively.
 Let $ m_{k_1}$ and  $ m_{k_2}$ be the number of edges corresponding to $x_{k_1}$ and $x_{k_2}$, respectively.
We have 
\begin{align*}
    m_{k_1}-\lambda k_1 =  m_{k_2} -\lambda k_2. 
\end{align*}

We have to show that for $\varepsilon$ small enough, the value $(\lambda-\epsilon)$ implies
\begin{align*}
    m_{k_1} - (\lambda-\epsilon)k_1 < m_{k} - (\lambda-\epsilon)k,\;\; \forall k  \neq k_1.
\end{align*} 
For $k > k_1$ and $\varepsilon >0$, suppose  
\begin{align*}
m_{k_1} - (\lambda-\epsilon)k_1 &\ge  m_{k} - (\lambda-\epsilon)k.
\end{align*}
Then we get a chain  of  inequalities: 
\begin{align*}
0\le 
m_{k_1} - \lambda k_1 + \epsilon k_1 -(  m_{k} - \lambda k + \epsilon k) =
     (m_{k_1} - \lambda k_1) - (m_{k} - \lambda k)+ \epsilon k_1 - \epsilon k \le 
    \epsilon (k_1 - k). 
\end{align*}
therefore $k_1 - k\ge 0$, which is a  
contradiction. Note that the last inequality in the above chain follows from the fact that $x_{k_1}$ is optimal for ~\eqref{SkS_QUBO_Lagrangian_1}.

Suppose  now $k < k_1$ and $\epsilon>0$.
We have $m_{k_1} - \lambda k_1 < m_{k} - \lambda k$. Let $d' = \lambda(k_1-k) - (m_{k_1} - m_k) > 0$. The following  inequalities are equivalent:
\begin{align*}
    m_{k_1} - (\lambda - \epsilon)k_1 \ge & ~m_k - (\lambda - \epsilon)k \\
    0 \ge & ~ (\lambda - \epsilon)(k_1-k) - (m_{k_1} - m_k)\\
    0 \ge & ~ d' - \epsilon(k_1-k)\\
    \epsilon \ge  & ~ \frac{d'}{k_1 - k}.
\end{align*}
Therefore, if we set 
$  d^-:=
 \min_{k<k_1} \frac{d'}{k_1 - k} = \min_{k<k_1} \left( \lambda - \frac{m_{k_1} - m_{k}}{k_1 - k} \right) > 0$, then for $0<\epsilon<d^-$, we have $m_{k_1} - (\lambda-\epsilon)k_1 < m_{k} - (\lambda-\epsilon)k,\; \forall k < k_1$.

Similarly, we can show for $0<\epsilon<d^+=\min_{k>k_2} \left( \frac{m_{k_2} - m_{k}}{k_2 - k} - \lambda \right)$, every minimizer of ~\eqref{SkS_QUBO_Lagrangian_1} will have cardinality $k_2$.
\end{proof}

\subsection{Augmented Lagrangian relaxations for \ref{SkS_problem_standard_formulation}}
\label{augmented_lagrangian_SkS}

As a third approach, we use the augmented Lagrangian method, which combines the quadratic penalty approach with Lagrangian relaxation. This leads to the following QUBO relaxation for the ~\eqref{SkS_problem_standard_formulation} problem:

\begin{align}\label{SkS_QUBO_formulation_two}
\tag{\mbox{SkS-AL}}
     \min \left\{\frac{1}{2}x^TAx + \lambda(k - e^Tx) + \frac{\mu}{2}(e^Tx - k)^2 
     \mid x \in \{0, 1\}^n \right\},
\end{align}
where $\mu$ is referred to as the penalty parameter, and $\lambda$ as the Lagrangian multiplier; see, for instance~\cite{Bertsekas}.

As with the Lagrangian relaxation, the values of $\mu$ and $\lambda$ in the augmented Lagrangian method are generally not known in advance and are therefore typically determined through an iterative procedure, where  solve ~\eqref{SkS_QUBO_formulation_two} with
selected  $\mu,\lambda$ and then update these parameters, until stopping criteria are reached. 

The central stopping criteria is feasibility of the computed solution for the primal constraint. Indeed,  if we have values of $\lambda$ and $\mu$ that yield a solution of desired cardinality, then we have  an optimal solution to the original problem, as the following result shows.\\
\begin{lem}\label{lem:optimality_test_ALM}
Let $G$ be a graph on $n$ vertices and $k \leq n$. Furthermore, let $x^*$ be an optimal solution of~\eqref{SkS_QUBO_formulation_two} for some $\lambda \in \mathbb{R}$ and $\mu > 0$. If $e^T x^* = k$, then $x^*$ is also an optimal solution of the original  problem~\eqref{SkS_problem_standard_formulation}.
\end{lem}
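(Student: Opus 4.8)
The plan is to follow verbatim the template already used for Lemma~\ref{lem:optimality_test_QUBO} and Lemma~\ref{lem:optimality_test_LR}, exploiting the fact that at a point satisfying the cardinality constraint \emph{both} extra terms in~\eqref{SkS_QUBO_formulation_two} (the linear Lagrangian term $\lambda(k-e^Tx)$ and the quadratic penalty $\frac{\mu}{2}(e^Tx-k)^2$) vanish simultaneously. So the augmented Lagrangian objective agrees with $\frac{1}{2}x^TAx$ on the feasible set of~\eqref{SkS_problem_standard_formulation}, and the argument reduces to the same comparison as before.

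Concretely, I would first invoke optimality of $x^*$ for~\eqref{SkS_QUBO_formulation_two}, writing
\begin{align*}
\frac{1}{2}x^{*T}Ax^* + \lambda(k-e^Tx^*) + \frac{\mu}{2}(e^Tx^*-k)^2 \leq \frac{1}{2}x^TAx + \lambda(k-e^Tx) + \frac{\mu}{2}(e^Tx-k)^2
\end{align*}
for all $x\in\{0,1\}^n$. Then I would substitute the hypothesis $e^Tx^*=k$, which makes the left-hand side collapse to $\frac{1}{2}x^{*T}Ax^*$, giving
\begin{align*}
\frac{1}{2}x^{*T}Ax^* \leq \frac{1}{2}x^TAx + \lambda(k-e^Tx) + \frac{\mu}{2}(e^Tx-k)^2
\end{align*}
for all $x\in\{0,1\}^n$. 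Restricting this inequality to the $x$ with $e^Tx=k$, where the two extra terms again vanish, yields $\frac{1}{2}x^{*T}Ax^* \leq \frac{1}{2}x^TAx$ for every binary vector of cardinality $k$. Since $e^Tx^*=k$ also makes $x^*$ feasible for~\eqref{SkS_problem_standard_formulation}, it follows that $x^*\in\arg\min\{\frac{1}{2}x^TAx \mid e^Tx=k,\ x\in\{0,1\}^n\}$, i.e.\ $x^*$ is optimal for~\eqref{SkS_problem_standard_formulation}; alternatively one can phrase the last step as a one-line contradiction argument exactly as in Lemma~\ref{lem:optimality_test_QUBO}.

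There is essentially no hard part here: the lemma is the augmented-Lagrangian analogue of the two earlier ``optimality test'' lemmas, and the only thing to be careful about is to note explicitly that the cardinality assumption kills the linear term and the quadratic term at once, and that the subsequent comparison is taken only over feasible points of~\eqref{SkS_problem_standard_formulation}, where those same terms also vanish. No assumption on the sign of $\lambda$ or the size of $\mu$ is needed for this direction, which is worth remarking since the substance of the augmented Lagrangian approach lies in \emph{forcing} $e^Tx^*=k$ via suitable $(\lambda,\mu)$ — a question addressed separately in the results that follow.
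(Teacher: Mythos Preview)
Your proposal is correct and follows essentially the same approach as the paper's own proof: write the optimality inequality for~\eqref{SkS_QUBO_formulation_two}, use $e^Tx^*=k$ to collapse the left-hand side, and then restrict to feasible $x$ so that both penalty terms vanish on the right. The paper likewise explicitly invokes the template of Lemmas~\ref{lem:optimality_test_QUBO} and~\ref{lem:optimality_test_LR}, so your write-up matches it almost verbatim.
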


\begin{proof}
We proceed in the same manner as in the proofs of Lemmas~\ref{lem:optimality_test_QUBO} and~\ref{lem:optimality_test_LR}. First, given that $x^*$ is optimal for~\eqref{SkS_QUBO_formulation_two}, we note that
\begin{align*}
\frac{1}{2}x^{*T} A x^* + \lambda (k - e^T x^*) + \frac{\mu}{2}(e^Tx^* - k)^2 \leq \frac{1}{2} x^T A x + \lambda (k - e^T x) + \frac{\mu}{2}(e^Tx - k)^2
\end{align*}
for all $x \in \{0,1\}^n$. Now let $e^T x^* = k$. Then, 
\begin{align*}
\frac{1}{2} x^{*T} A x^* \leq \frac{1}{2} x^T A x + \lambda (k - e^T x) + \frac{\mu}{2}(e^Tx - k)^2
\end{align*}
for all $x \in \{0,1\}^n$. Moreover,
\begin{align*}
\frac{1}{2}x^{*T} A x^* \leq \frac{1}{2} x^T A x 
\end{align*}
for all $x \in \{0,1\}^n$ with $e^T x = k$. Therefore, $x^*$ is also optimal  for~\eqref{SkS_problem_standard_formulation}.
\end{proof}

As discussed in Section~\ref{QUBO_theory}, there is no theoretical recipe for selecting the penalty parameter and the Lagrangian multiplier in general, but for the case of the \ref{SkS_problem_standard_formulation} problem, we can always choose in advance parameters that 
yield an optimal solution.\\

\begin{lem}\label{lemma_ALM_SkS}
Let $G$ be a graph on $n$ vertices and $k \leq n$. Let  $\lambda = \frac{1}{2}(k-1)$ and  $\mu = k$.
Then,
\begin{itemize}
    \item [(i)]
    If  $x^* \in \{0, 1\}^n$ is  an optimal solution of ~\eqref{SkS_problem_standard_formulation}, then
\begin{align}\label{ALM_SkS}
\frac{1}{2} x^{*T} A x^* < \frac{1}{2}x^TAx + \lambda(k - e^T x) + \frac{\mu}{2}(e^Tx - k)^2
\end{align}
for all $x \in \{0, 1\}^n$ with $e^Tx \neq k$. 
\item [(ii)] Any optimal solution of~\eqref{SkS_QUBO_formulation_two}  is an optimal solution of ~\eqref{SkS_problem_standard_formulation}.
\end{itemize}
\end{lem}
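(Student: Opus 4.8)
The plan is to prove part (i) by the same bookkeeping used in Lemmas~\ref{penalty_feasible_solution}, \ref{lemma_k_minus_1} and \ref{quadratic_SkS_lower_penalty}, namely splitting the infeasible vectors $x$ according to whether $e^T x = k+i$ with $i \geq 1$ or $e^T x = k-j$ with $j \geq 1$, bounding $\tfrac12 x^T A x$ from below by $m_{k+i}$ (resp. $m_{k-j}$), and then using the combinatorial inequalities \eqref{eq:m_k_1} and \eqref{eq:m_k_2} to compare against $m_k = \tfrac12 x^{*T} A x^*$. With $\lambda = \tfrac12(k-1)$ and $\mu = k$, for the overfull case the right-hand side of \eqref{ALM_SkS} is at least
\begin{align*}
m_{k+i} + \tfrac12(k-1)(k - (k+i)) + \tfrac{k}{2} i^2
= m_{k+i} - \tfrac12(k-1) i + \tfrac{k}{2} i^2,
\end{align*}
and I would show this strictly exceeds $m_k$ using $m_{k+i} \le m_k + ki + \tfrac{i(i-1)}{2}$ is the wrong direction, so instead I use $m_{k+i} \ge m_k$ together with the fact that $\tfrac{k}{2} i^2 - \tfrac12(k-1) i = \tfrac{i}{2}\big(ki - (k-1)\big) > 0$ for all integers $i \ge 1$ (since $ki - (k-1) \ge k - k + 1 = 1$). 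For the underfull case $e^T x = k-j$, the right-hand side is at least
\begin{align*}
m_{k-j} + \tfrac12(k-1) j + \tfrac{k}{2} j^2,
\end{align*}
and by \eqref{eq:m_k_2} we have $m_k \le m_{k-j} + kj - \tfrac{j(j+1)}{2}$, so it suffices to check $\tfrac12(k-1) j + \tfrac{k}{2} j^2 \ge kj - \tfrac{j(j+1)}{2}$, i.e. (dividing by $j$) $\tfrac12(k-1) + \tfrac{k}{2} j \ge k - \tfrac{j+1}{2}$, which rearranges to $\tfrac{k}{2} j + \tfrac{j+1}{2} \ge k - \tfrac{k-1}{2} = \tfrac{k+1}{2}$, i.e. $kj + j + 1 \ge k + 1$, i.e. $kj + j \ge k$, which holds for every integer $j \ge 1$ with strictness unless $j=1$, in which case one re-checks that the bound \eqref{eq:m_k_2} and the $j^2$ term already give a strict inequality (the $\mu j^2/2$ term is strict since $(e^T x - k)^2 = j^2 \ge 1$ while we only needed $j$ worth of penalty). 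I would write these two elementary inequalities carefully, flagging—as the authors do in Lemma~\ref{lemma_k_minus_1}—that they are routine calculus/arithmetic on integers.

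For part (ii), I would follow verbatim the argument pattern of part (ii) in Lemmas~\ref{penalty_feasible_solution}, \ref{lemma_k_minus_1}, \ref{quadratic_SkS_lower_penalty}: let $\hat x$ be optimal for \eqref{SkS_QUBO_formulation_two}; if $e^T \hat x \ne k$ then part (i) gives $\tfrac12 \hat x^T A \hat x + \lambda(k - e^T\hat x) + \tfrac{\mu}{2}(e^T\hat x - k)^2 > \tfrac12 x^{*T} A x^* = \tfrac12 x^{*T} A x^* + \lambda(k - e^T x^*) + \tfrac{\mu}{2}(e^T x^* - k)^2$ (the penalty and Lagrangian terms vanish at the feasible $x^*$), contradicting optimality of $\hat x$; hence $e^T \hat x = k$, and then Lemma~\ref{lem:optimality_test_ALM} applies to conclude $\hat x$ is optimal for \eqref{SkS_problem_standard_formulation}.

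The main obstacle is purely the arithmetic in part (i): because the Lagrangian term $\lambda(k - e^T x)$ is \emph{negative} when $e^T x > k$, it works against the penalty, so I must confirm the quadratic penalty $\tfrac{\mu}{2} i^2$ with $\mu = k$ dominates the linear deficit $\tfrac12(k-1) i$ for \emph{all} $i \ge 1$ (not just large $i$), and symmetrically that the chosen constants are still compatible with the cruder combinatorial bound \eqref{eq:m_k_2} in the underfull direction. The delicate point is the boundary cases $i = 1$ and $j = 1$, where slack is smallest; I expect both to come out strict, the first because $ki - (k-1) \ge 1$, the second because the $j^2$ in the penalty beats the $j$ that \eqref{eq:m_k_2} costs. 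No genuinely hard idea is needed beyond this careful case check, so I would present it compactly and, like the authors, defer the most tedious inequality verification with a remark that it is a standard computation.
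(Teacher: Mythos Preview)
Your proposal is correct and follows essentially the same approach as the paper: split into the overfull case $e^Tx=k+i$ (use $m_{k+i}\ge m_k$ and check $\tfrac{k}{2}i^2-\tfrac12(k-1)i>0$) and the underfull case $e^Tx=k-j$ (use \eqref{eq:m_k_2} and check $\tfrac12(k-1)j+\tfrac{k}{2}j^2>kj-\tfrac{j(j+1)}{2}$), then derive (ii) exactly as you outline. One small slip: your reduced inequality $kj+j\ge k$ is in fact \emph{strict} already at $j=1$ (it reads $k+1>k$), so the extra re-check you flag for that boundary case is unnecessary.
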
 

\begin{proof}
We first prove {\it (i)}. Let $x \in \{0, 1\}^n$ such that $e^T x = k + i$ for some $i \in \{1, \ldots, n-k\}$. Then, 
\begin{align*}
\frac{1}{2}x^TAx + \lambda(k - e^Tx) + \frac{\mu}{2}(e^Tx - k)^2 &= \frac{1}{2}x^TAx - \frac{1}{2}(k-1) i + \frac{k}{2} i^2 \\
&\geq m_{k+i} - \frac{1}{2}(k-1) i + \frac{k}{2} i^2 \\
&\geq m_{k} - \frac{1}{2}(k-1) i + \frac{k}{2} i^2 \\
&> m_k = \frac{1}{2} x^{*T} A x^*,
\end{align*}
because $m_{k+i}\ge m_k$ and $- \frac{1}{2}(k-1) i + \frac{k}{2} i^2 > 0$ for any $i \in \{1, \ldots, n-k\}$. Hence,~\eqref{ALM_SkS} is satisfied for any $x \in \{0, 1\}^n$ with $e^T x > k$.

Now let $x \in \{0, 1\}^n$ such that $e^T x = k - j$ for some $j \in \{1, \ldots, k - 1\}$. Then, following the argumentation from Section~\ref{section_SkS_QUBO_exact_penalty}, we obtain
\begin{align*}
\frac{1}{2}x^TAx + \lambda(k - e^Tx) + \frac{\mu}{2}(e^Tx - k)^2 &= \frac{1}{2}x^TAx + \frac{1}{2}(k-1) j + \frac{k}{2} j^2 \\
&\geq m_{k-j} + \frac{1}{2}(k-1) j + \frac{k}{2} j^2 \\
&> m_{k-j} + kj - \frac{j(j+1)}{2} \\
&\geq m_k = \frac{1}{2} x^{*T} A x^*.
\end{align*}
The strict inequality above follows from the fact that
since $\frac{1}{2}(k-1) j + \frac{k}{2} j^2 > kj - \frac{j(j+1)}{2}$ for any $j \in \{1, \ldots, k - 1\}$. The last inequality is essentially ~\eqref{eq:m_k_2}.
Thus,~\eqref{ALM_SkS} also holds for any $x \in \{0, 1\}^n$ with $e^T x < k$ and {\it (i)} is proven.

To prove {\it (ii)}, let us assume that $\hat x$ is optimal for ~\eqref{SkS_QUBO_formulation_two} and $x^*$ is optimal for ~\eqref{SkS_problem_standard_formulation}. If $e^T\hat x\neq k$, then {\it (i)} implies
\begin{align*}
\frac{1}{2} \hat{x}^T A \hat{x} + \lambda(k - e^T\hat{x}) + \frac{\mu}{2}(e^T\hat{x} - k)^2 > \frac{1}{2} x^{*T} A x^*,
\end{align*}
contradicting the assumption of optimality of $\hat x$ for ~\eqref{SkS_QUBO_formulation_two}.
Thus, $e^T\hat{x} =  k$ and $\hat x$ is feasible for ~\eqref{SkS_problem_standard_formulation}.
If it is not an optimal solution for~\eqref{SkS_problem_standard_formulation}, then 
we have
\begin{align*}
\frac{1}{2} \hat x^T A \hat x > \frac{1}{2} x^{*T} A x^*
\end{align*}
which again contradicts the optimality  of $\hat x$ for ~\eqref{SkS_QUBO_formulation_two}. Hence, any optimal solution of~\eqref{SkS_QUBO_formulation_two} is an optimal solution of ~\eqref{SkS_problem_standard_formulation}, which finishes the proof.
\end{proof}

\section{Numerical Results}

In the numerical section, we first introduce the graph instances, for which we solve SkS exactly or approximately by the methods implied by the results from  Section \ref{definitions}. 

Next, we present results obtained by applying the exact solver BiqBin \cite{gusmeroli2022biqbin}, available from 
\url{https://github.com/Rudolfovoorg/parallel_biqbin_maxcut}, on quadratic, Lagrangian, and Augmented Lagrangian relaxation of ~\eqref{SkS_problem_standard_formulation}. For the penalty parameters, we used the values for which we have proven in Section \ref{definitions} that   an optimum solution of the relaxation is an optimum solution of ~\eqref{SkS_problem_standard_formulation}.
Due to the underlying NP-hardness of SkS problem, we could use this approach only for the instances of small or medium size.

Finally, we introduce three approximate algorithms based on quadratic penalty, Lagrangian relaxation, and augmented Lagrangian relaxation methods, which in their core use a simulated annealing  and 
 a quantum processing unit solver to get good feasible solutions of the relaxations of ~\eqref{SkS_problem_standard_formulation}.
 This approach also works for larger instances.

\subsection{Data}\label{sec:Data}
For the computations, we use three datasets, which are all published as open data in \cite{Github_Data} 

\begin{enumerate}
\item Random graphs from the Erdős–Rényi (ER) model~$G(n, p)$. In this model, the number of vertices~$n$ is fixed, and each edge is included with probability~$p$ independently of all other edges. We consider graphs with $n \in \{40,80,100,120,140, 160\}$ and $p \in \{0.25, 0.50, 0.75 \}$ that were previously investigated in~\cite{Billionnet2005, Billionnet2009, Sotirov2020}, and that are available from the following webpage: \url{https://cedric.cnam.fr/~lamberta/Library/k-cluster.html}.
    For each $n$ and $p$, there are 3 subgroups of 5 instances: one subgroup with the optimum $k=n/4$, one with optimum $k=n/2$, and one with optimum $k=3n/4$,   resulting in overall 
270 instances.
    
    For these graphs, the webpage \url{https://cedric.cnam.fr/~lamberta/Library/solutions_k-cluster.html} contains the optimum values for the densest $k$-subgraph, which is equivalent to SkS problem on the graph complement, as explained in Section \ref{definitions}.

    For easier comparison of our results with the results from the source, we decided to report results for the densest $k$-subgraph, which means that we first created a complement of each graph, solved SkS problem on this complement to obtain a sparsest $k$-subgraph, and finally calculated its complement to obtain a densest $k$-subgraph.

    \item Bipartite graphs $BG_{m,n,p}$ with partition
    sizes $m$ and $n$ and with edge probability $p$. These graphs trivially satisfy assumptions of Theorem \ref{roman:existence}, as explained below, so we computed optimum solutions of ~\eqref{SkS_problem_standard_formulation}
    using Lagrangian relaxation.

    \item We created a D-Wave topology graphs, derived from the actual topology of the D-Wave Advantage2 system (Zephyr architecture) hosted at Forschungszentrum Jülich. For sizes $n=50, 75, 100, 200, 300, \ldots, 1000, 1500$ we extracted the dense subgraphs on $n$ vertices  using a greedy algorithm, which iteratively removed vertices with the highest degree, until only $n$ vertices remained. These subgraph are denoted  
    by $DW_{50},\ldots,DW_{1500}$.
\end{enumerate}

\subsection{Results with Exact Solver}\label{sec:exact_solvers}
As a first step, we report results obtained by solving 
\eqref{SkS_QUBO_formulation_one}, ~\eqref{SkS_QUBO_Lagrangian_1}, and ~\eqref{SkS_QUBO_formulation_two} exactly, by using the values of penalty parameters, as suggested by lemmas and theorems from Section \ref{definitions}.
We use the ER dataset for the quadratic penalty and the augmented Lagrangian approach, while the Lagrangian relaxation approach is tested on bipartite graphs, since they are designed to satisfy assumptions of Theorem \ref{roman:existence}, which are needed for this 
approach, while for the other two datasets we do not know if they satisfy these assumptions. 
For the quadratic penalty and augmented Lagrangian approaches, we employ the BiqBin solver \cite{gusmeroli2022biqbin} on the Slovenian national supercomputer Vega\footnote{https://izum.si/en/vega-en/}, using up to 1000 CPU cores,  with a time limit of 30 minutes per instance. For the Lagrangian relaxation approach, solutions are obtained with the SCIP solver \cite{SCIP} and laptop.

\subsubsection{Exact solutions with Quadratic Penalty}
Here, we report numerical results obtained by solving ~\eqref{SkS_QUBO_formulation_one} for the ER graphs. More precisely, we compute results for the densest $k$-subgraph problem, which is equivalent to the sparsest $k$-subgraph problem on the graph complement, as explained in Section \ref{sec:Data}. As these calculations are computationally very extensive, we have not carried them out for all 270 instances, but only for 9 of them: for each $n\in\{40,80,100\}$ and each $p\in\{0.25,0.5,0.75\}$ we have taken only the subgroup with $k=n/4$ and only the first instances of each subgroup. 

For the penalty parameter, we initially take the value
$\mu_{LB} = 2\min\{\tilde{m}_k,k-1\}+1$, as suggested in ~\eqref{choose_quadratic_penalty}.
The values of $\tilde{m}_k$ are computed by a greedy algorithm, which iteratively removes vertices with the highest degree, until only $k$ vertices remain.

We hypothesize that values of $\mu$ smaller than $\mu_{LB}$ may still yield optimum solutions; therefore we also solve relaxation  ~\eqref{SkS_QUBO_formulation_one} for $\mu = \lfloor \mu_{LB}/3 \rfloor$ and $\mu = \lfloor 2\mu_{LB}/3 \rfloor$.
For the sake of curiosity, we additionally consider a larger value, $\mu = \lfloor 4\mu_{LB}/3 \rfloor$. The corresponding results are presented in Table~\ref{tab: QP_biqbin}. 

\begin{table}[]
    \centering
    \resizebox{\linewidth}{!}{%
\begin{tabular}{lccclcccc}
\hline
\textbf{Instance} & $\boldsymbol{k}$ & \textbf{Optimum} & $\boldsymbol{\mu_{LB}}$ & \textbf{Metric} & $\boldsymbol{\lfloor \frac{\mu_{LB}}{3} \rfloor} $ 
& $\boldsymbol{\lfloor \frac{2\mu_{LB}}{3} \rfloor}$ 
& $\boldsymbol{\mu_{LB}}$ 
& $\boldsymbol{\lfloor \frac{4\mu_{LB}}{3} \rfloor}$ \\
\hline
\multirow[t]{6}{*}{kcluster40\_025\_10\_1} & \multirow[t]{6}{*}{10} & \multirow[t]{6}{*}{29} & \multirow[t]{6}{*}{19} & $\mu$ & 6 & 12 & 19 & 25 \\
 &  &  &  & time & 1.281 & 5.224 & 1.711 & 0.422 \\
 &  &  &  & B\&B nodes & 1 & 11 & 1 & 1 \\
 &  &  &  & computed $k$ & 9 & 10 & 10 & 10 \\
 &  &  &  & edges & 24 & \textbf{29} & \textbf{29} & \textbf{29} \\
\cline{1-9} \cline{2-9} \cline{3-9} \cline{4-9}
\multirow[t]{6}{*}{kcluster40\_050\_10\_1} & \multirow[t]{6}{*}{10} & \multirow[t]{6}{*}{40} & \multirow[t]{6}{*}{15} & $\mu$ & 5 & 10 & 15 & 20 \\
 &  &  &  & time & 0.746 & 0.576 & 0.533 & 0.491 \\
 &  &  &  & B\&B nodes & 1 & 1 & 1 & 1 \\
 &  &  &  & computed $k$ & 10 & 10 & 10 & 10 \\
 &  &  &  & edges & \textbf{40} & \textbf{40} & \textbf{40} & \textbf{40} \\
\cline{1-9} \cline{2-9} \cline{3-9} \cline{4-9}
\multirow[t]{6}{*}{kcluster40\_075\_10\_1} & \multirow[t]{6}{*}{10} & \multirow[t]{6}{*}{45} & \multirow[t]{6}{*}{1} & $\mu$ & 0 & 0 & 1 & 1 \\
 &  &  &  & time & - & - & 0.794 & 0.781 \\
 &  &  &  & B\&B nodes & - & - & 1 & 1 \\
 &  &  &  & computed $k$ & - & - & 10 & 10 \\
 &  &  &  & edges & - & - & \textbf{45} & \textbf{45} \\
\cline{1-9} \cline{2-9} \cline{3-9} \cline{4-9}
\multirow[t]{6}{*}{kcluster80\_025\_20\_1} & \multirow[t]{6}{*}{20} & \multirow[t]{6}{*}{94} & \multirow[t]{6}{*}{39} & $\mu$ & 13 & 26 & 39 & 52 \\
 &  &  &  & time & 3.341 & 420.747 & 82.4 & 50.131 \\
 &  &  &  & B\&B nodes & 5 & 10169 & 1569 & 435 \\
 &  &  &  & computed $k$ & 19 & 20 & 20 & 20 \\
 &  &  &  & edges & 87 & \textbf{94} & \textbf{94} & \textbf{94} \\
\cline{1-9} \cline{2-9} \cline{3-9} \cline{4-9}
\multirow[t]{6}{*}{kcluster80\_050\_20\_1} & \multirow[t]{6}{*}{20} & \multirow[t]{6}{*}{149} & \multirow[t]{6}{*}{39} & $\mu$ & 13 & 26 & 39 & 52 \\
 &  &  &  & time & 29.561 & 16.272 & 5.67 & 3.025 \\
 &  &  &  & B\&B nodes & 85 & 17 & 3 & 1 \\
 &  &  &  & computed $k$ & 20 & 20 & 20 & 20 \\
 &  &  &  & edges & \textbf{149} & \textbf{149} & \textbf{149} & \textbf{149} \\
\cline{1-9} \cline{2-9} \cline{3-9} \cline{4-9}
\multirow[t]{6}{*}{kcluster80\_075\_20\_1} & \multirow[t]{6}{*}{20} & \multirow[t]{6}{*}{182} & \multirow[t]{6}{*}{27} & $\mu$ & 9 & 18 & 27 & 36 \\
 &  &  &  & time & 22.238 & 7.333 & 6.564 & 5.584 \\
 &  &  &  & B\&B nodes & 47 & 29 & 27 & 21 \\
 &  &  &  & computed $k$ & 20 & 20 & 20 & 20 \\
 &  &  &  & edges & \textbf{182} & \textbf{182} & \textbf{182} & \textbf{182} \\
\cline{1-9} \cline{2-9} \cline{3-9} \cline{4-9}
\multirow[t]{6}{*}{kcluster100\_025\_25\_1} & \multirow[t]{6}{*}{25} & \multirow[t]{6}{*}{139} & \multirow[t]{6}{*}{49} & $\mu$ & 16 & 32 & 49 & 65 \\
 &  &  &  & time & 9.708 & 6554.86 & 404.633 & 158.865 \\
 &  &  &  & B\&B nodes & 19 & 138923 & 6765 & 2057 \\
 &  &  &  & computed $k$ & 24 & 25 & 25 & 25 \\
 &  &  &  & edges & 130 & \textbf{139} & \textbf{139} & \textbf{139} \\
\cline{1-9} \cline{2-9} \cline{3-9} \cline{4-9}
\multirow[t]{6}{*}{kcluster100\_050\_25\_1} & \multirow[t]{6}{*}{25} & \multirow[t]{6}{*}{218} & \multirow[t]{6}{*}{49} & $\mu$ & 16 & 32 & 49 & 65 \\
 &  &  &  & time & 132.443 & 64.37 & 48.963 & 44.378 \\
 &  &  &  & B\&B nodes & 1287 & 575 & 307 & 193 \\
 &  &  &  & computed $k$ & 24 & 25 & 25 & 25 \\
 &  &  &  & edges & 203 & \textbf{218} & \textbf{218} & \textbf{218} \\
\cline{1-9} \cline{2-9} \cline{3-9} \cline{4-9}
\multirow[t]{6}{*}{kcluster100\_075\_25\_1} & \multirow[t]{6}{*}{25} & \multirow[t]{6}{*}{282} & \multirow[t]{6}{*}{43} & $\mu$ & 14 & 28 & 43 & 57 \\
 &  &  &  & time & 11.886 & 7.410 & 9.286 & 8.041 \\
 &  &  &  & B\&B nodes & 29 & 25 & 31 & 19 \\
 &  &  &  & computed $k$ & 25 & 25 & 25 & 25 \\
 &  &  &  & edges & \textbf{282} & \textbf{282} & \textbf{282} & \textbf{282} \\
\cline{1-9} \cline{2-9} \cline{3-9} \cline{4-9}
\hline
\end{tabular}
    }
    \caption{Performance of BiqBin solver on \eqref{SkS_QUBO_formulation_one} using different values of $\mu$.}
    \label{tab: QP_biqbin}
\end{table}

Table \ref{tab: QP_biqbin} is divided into sections by rows, one for each ER instance that we used. Each row section contains in the first column the name of the corresponding ER instance, in the second column the value of $k$ for which we know the optimum value of the densest $k$-subgraph problem from the literature,  in the third column the number of edges in the densest $ k$-subgraph, and in the fourth column the value of $\mu_{LB}$.

The rest of each row section contains  four numerical columns, one column for each value of $\mu$ that we used, and  five rows:
the first row contains explicit values of $\mu$ that were used, the second row contains timing that BiqBin solver needed to solve ~\eqref{SkS_QUBO_formulation_one} with corresponding $\mu$, the third row contains the number of Branch and Bound nodes that the BiqBin solver created before it has converged, the fourth row reports the cardinality of the optimum solution, and the last row of each section contains the number of the edges in the computed subgraph.

We can observe from Table \ref{tab: QP_biqbin} that on the test instances, for $\mu=\lfloor 2\mu_{LB}/3 \rfloor$, $\mu=\mu_{LB}$, and $\mu = \lfloor 4\mu_{LB}/3 \rfloor$, the computed optimum solution of ~\eqref{SkS_QUBO_formulation_one}
is always an optimum solution for the original problem ~\eqref{SkS_problem_standard_formulation}: its cardinality (computed $k$) is equal to $k$, and   the number of edges in the subgraph induced by the computed solution is equal to Optimum.
Note that for the third instance we have   $ \lfloor 2\mu_{LB}/3 \rfloor=
\lfloor \mu_{LB}/3 \rfloor=0$, hence in these two cases we lose the quadratic penalty term from ~\eqref{SkS_QUBO_formulation_one}, so we get a different optimization problem, hence we ignore these cases.  

\subsubsection{Exact solutions with Lagrangian Relaxation}
In this subsection,  we report results obtained by solving the  Lagrangian relaxation ~\eqref{SkS_QUBO_Lagrangian_1} with the exact solver BiqBin. 
While Lemma \ref{lem:optimality_test_LR} guarantees that if an optimal solution $x^*$ of ~\eqref{SkS_QUBO_Lagrangian_1} satisfies the cardinality constraint $e^T x^* = k$, then it is also optimal for the original problem ~\eqref{SkS_problem_standard_formulation}, the challenge lies in determining a suitable value for the Lagrangian multiplier $\lambda$ a priori that ensures this condition. Unlike the quadratic penalty and augmented Lagrangian approach, where specific bounds for the penalty parameters ($\lambda$, $\mu$) can be derived to guarantee exactness for general cases, a universally determined value of $\lambda$ for Lagrangian relaxation approach is not readily available for general instances of  SkS problem.



Recall that  if $\lambda$  satisfies the assumptions of Theorem \ref{roman:existence}, then any optimal solution $x^*$ of ~\eqref{SkS_QUBO_Lagrangian_1} is also optimal for the original problem ~\eqref{SkS_problem_standard_formulation}.
Lemma \ref{lambda_tighter_bounds} implies that if the sequence $\{\diff_\ell\} $ is monotonic and  $\diff_k < \diff_{k+1}$ for some $k$, then any $\lambda \in (\diff_k, \diff_{k+1})$ will yield a sparsest $k$-subgraph (Corollary \ref{corollary_LR_diff_l}).

Unfortunately, without solving  SkS
explicitly for each $k$, we can not determine $\{\diff_\ell\mid 1\le \ell\le n \}$ and can not confirm that this sequence is monotonically increasing, so this theory can  not be applied   for solving 
\eqref{SkS_problem_standard_formulation} via ~\eqref{SkS_QUBO_Lagrangian_1}.

If for some graph $G$ and some $1\le k\le n-1$ we have
$\diff_k<\diff_{k+1}$, this is not sufficient condition in terms of 
Corollary \ref{corollary_LR_diff_l}, but we can still take 
$\lambda \in (\diff_k,\diff_{k+1})$ and solve ~\eqref{SkS_QUBO_Lagrangian_1} and verify, what happens.

For the bipartite graphs and the D-Wave graphs, we have computed stability number $\alpha=\alpha(G)$,  $\diff_{\alpha+1}$, $\diff_{\alpha+2}$ and the ~\eqref{SkS_QUBO_Lagrangian_1} using the SCIP solver \cite{SCIP}.

For bipartite graphs we observed that $\diff_{\alpha+1}<\diff_{\alpha+2}$,
while for some of the considered D-Wave graphs, we observed the opposite situation.




\begin{table}[]
    \centering
    \resizebox{\linewidth}{!}{%
\begin{tabular}{l|cccc|cc|cc}
\hline

\multirow{2}{*}{\textbf{Instance}} & 
\multirow{2}{*}{$\boldsymbol{\alpha}$} &
\multirow{2}{*}{$\boldsymbol{k}$} & 
\multirow{2}{*}{$\boldsymbol{\text{diff}_{\alpha+1}}$} & 
\multirow{2}{*}{$\boldsymbol{\text{diff}_{\alpha+2}}$} & 
\multicolumn{2}{c|}{$\boldsymbol{\lambda = \text{diff}_{\alpha+1}-\epsilon}$} &
\multicolumn{2}{c}{$\boldsymbol{\lambda = \text{diff}_{\alpha+1}+\epsilon}$} \\
\cline{6-9}
 &  &  &  &  & $k_{computed}$ & edges & $k_{computed}$ & edges \\

\hline
$BG_{30,30,0.8}$ & 30 & 31 & 16 & 19 & 30 & 0 & \textbf{31} & 16 \\
$BG_{30,40,0.8}$ & 40 & 41 & 27 & 29 & 40 & 0 & \textbf{41} & 27 \\
$BG_{30,50,0.8}$ & 50 & 51 & 35 & 36 & 50 & 0 & \textbf{51} & 35 \\
$BG_{30,60,0.8}$ & 60 & 61 & 44 & 45 & 60 & 0 & \textbf{61} & 44 \\
$BG_{30,70,0.8}$ & 70 & 71 &  51 & 52 & 70 & 0 & \textbf{71} & 51 \\
$DW_{50}$ & 15 & 16 & 1 & 1 & 15 & 0 & 19 & 4 \\
$DW_{75}$ & 25 & 26 & 4 & 1 & 34 & 16 & 34 & 16 \\
$DW_{100}$ & 32 & 33 & 1 & 1 & 32 & 0 & 36 & 4 \\
$DW_{200}$ & 63 & 64 & 3 & 1 & 83 & 34 & 83 & 34 \\
$DW_{300}$ & 95 & 96 & 2 & 1 & 116 & 31 & 124 & 47 \\
\hline
\end{tabular}
    }
    \caption{Results from the exact solver for the Lagrangian Relaxation \eqref{SkS_QUBO_Lagrangian_1} for selected instances, $\epsilon = 0.1$.}
    \label{tab: LR_exact}
\end{table}

Table~\ref{tab: LR_exact} contains the results obtained by solving ~\eqref{SkS_QUBO_Lagrangian_1} with exact solver BiqBin, for selected benchmark instances. For the cases where $\diff_{\alpha+1} < \diff_{\alpha+2}$, choosing a $\lambda$ in the interval $(\text{diff}_{\alpha+1}, \text{diff}_{\alpha+2})$ consistently yields a solution of cardinality $\alpha + 1$, hence optimal solutions, as desired. These instances serve as evidence that local monotonicity, i.e. $\diff_k <\diff_{k+1}$ for particular $k$ might be enough to assure that an optimal solution of ~\eqref{SkS_QUBO_Lagrangian_1} is also optimal  for \ref{SkS_problem_standard_formulation}, for given $k$.

In contrast, for the  DW graphs, the behavior is less predictable. Specifically, on instances where $\diff_{\alpha+1} > \diff_{\alpha+2}$ ($DW_{75}$, $DW_{200}$, $DW_{300}$), even the local monotonicity property  is violated. In such cases, setting $\lambda = \text{diff}_{\alpha+1} + \varepsilon$ fails to produce a subgraph of the desired size. In these cases, for any value of $\lambda$, the optimum solution of ~\eqref{SkS_QUBO_Lagrangian_1} will not be of the desired cardinality $k$, as impled by Corollary \ref{cor: diff_k>diff_k_plus_1}.




\subsubsection{Augmented Lagrangian relaxation}

For the same instances as in Table \ref{tab: QP_biqbin}, we solve the augmented Lagrangian relaxation ~\eqref{SkS_QUBO_formulation_two} with the BiqBin solver. We consider the values $\lambda = \frac{1}{2}(k-1)$ and $\mu = k$, as suggested by Lemma \ref{lemma_ALM_SkS}. The computations are shown in Table \ref{tab: AugLag_biqbin}.
The last two columns of this table demonstrate that the cardinalities of the computed solutions are always equal to the true values of $k$, and that the numbers of edges in the subgraphs induced by the computed solutions of ~\eqref{SkS_QUBO_formulation_two} are always equal to the optimal values of the original problem, as guaranteed by Lemma \ref{lemma_ALM_SkS}.

\begin{table}[H]
    \centering
    \resizebox{\linewidth}{!}{%
\begin{tabular}{lcccccccc}
\hline
 \textbf{Instance} & $\boldsymbol{k}$ & \textbf{Optimum} & $\boldsymbol{\lambda}$ & $\boldsymbol{\mu}$ & \textbf{time} & \textbf{B\&B nodes} & $\boldsymbol{k_{computed}}$ & \textbf{edges} \\
\hline
kcluster40\_025\_10\_1 & 10 & 29 & 4.50 & 10.00 & 0.303 & 1 & 10 & \textbf{29} \\
kcluster40\_050\_10\_1 & 10 & 40 & 4.50 & 10.00 & 0.502 & 1 & 10 & \textbf{40} \\
kcluster40\_075\_10\_1 & 10 & 45 & 4.50 & 10.00 & 16.590 & 1903 & 10 & \textbf{45} \\
kcluster80\_025\_20\_1 & 20 & 94 & 9.50 & 20.00 & 12.598 & 9 & 20 & \textbf{94} \\
kcluster80\_050\_20\_1 & 20 & 149 & 9.50 & 20.00 & 2.750 & 1 & 20 & \textbf{149} \\
kcluster80\_075\_20\_1 & 20 & 182 & 9.50 & 20.00 & 92.792 & 1237 & 20 & \textbf{182} \\
kcluster100\_025\_25\_1 & 25 & 139 & 12.00 & 25.00 & 18.758 & 21 & 25 & \textbf{139} \\
kcluster100\_050\_25\_1 & 25 & 218 & 12.00 & 25.00 & 19.284 & 47 & 25 & \textbf{218} \\
kcluster100\_075\_25\_1 & 25 & 282 & 12.00 & 25.00 & 164.038 & 1357 & 25 & \textbf{282} \\
\bottomrule
\end{tabular}
    }
    \caption{Results of  BiqBin solver for $ER$ graphs using Augmented Lagrangian Relaxation \eqref{SkS_QUBO_formulation_two}.}
    \label{tab: AugLag_biqbin}
\end{table}





\subsection{Iterative Methods}

Solving ~\eqref{SkS_problem_standard_formulation} and its relaxations ~\eqref{SkS_QUBO_formulation_one}, ~\eqref{SkS_QUBO_Lagrangian_1}, and ~\eqref{SkS_QUBO_formulation_two} exactly is, in general, computationally expensive due to the combinatorial explosion of the search space as the graph size increases. Exact solvers quickly become impractical for larger instances. Moreover, the Lagrange multiplier $\lambda$ and quadratic penalty parameter $\mu$, which are critical for penalizing constraint violation, are known in advance only in specific cases, outlined in Section \ref{definitions}, and vary significantly between instances. However, if we use approximate algorithms instead of exact solvers, then the theoretical results about these parameters from Section \ref{definitions} are no longer relevant.

These limitations motivate the development of adaptive, iterative methods that can efficiently explore the solution space without requiring exact parameter tuning and without the need for exact solvers. The iterative algorithms introduced in this section dynamically update penalty parameters based on the intermediate solution quality, enabling gradual enforcement of the cardinality constraint, and employ algorithms that solve QUBO problems approximately.

In the rest of this subsection, we propose and implement three iterative algorithms that balance objective optimization with constraint satisfaction. Each method progressively adjusts the penalty or Lagrangian parameters to guide the solution toward a subgraph of the desired cardinality $k$. In this study, we use two heuristic solvers:  the Simulated Annealing (SA) solver and the D-Wave quantum processing unit (QPU) solver. 

SA is a probabilistic metaheuristic inspired by the annealing process in metallurgy. It explores the solution space by iteratively moving the current solution to a neighboring one. Moves that improve the objective function are always accepted, whereas moves that worsen it are accepted with a probability that decreases over time. This allows the algorithm to escape the local optima. For our experiments, we used the SA implementation from \texttt{dwave-ocean-sdk}, which is a versatile tool to solve QUBO problems.

We used the D-Wave Advantage2 quantum annealer, which is designed to solve QUBO problems by mapping them to its underlying qubit architecture. The QPU leverages quantum effects such as superposition and tunneling to find low-energy states of the physical system, which correspond to optimal or near-optimal solutions of the QUBO problem.  For problems that do not match the native hardware graph (Zephyr) graph, an embedding process is required, which maps the logical variables of the problem to physical qubits. 

The computations in this section are less intensive compared to the exact approaches discussed previously. Consequently, we considered the full set of 270 ER instances. However, due to space limitations, the tables in the following subsections report results for only 15 representative instances. Specifically, for each $(n, p)$ pair, we selected the first instance within the subgroup corresponding to $k = n/4$. A comprehensive summary based on the full ER dataset is provided in the Discussion section.



\subsubsection{Quadratic Penalty Iterative Algorithm (QPIA)}
In this method, we consider ~\eqref{SkS_QUBO_formulation_one} and try solve it iteratively, where in each iteration we solve ~\eqref{SkS_QUBO_formulation_one}  approximately and update the quadratic penalty parameter. 
To ensure sufficient enforcement of the constraint, we initialize the quadratic penalty parameter using $\mu = 2\min(\tilde{m}, k-1) + 1$, where $\tilde{m}$ is the number of edges in the subgraph found by greedy heuristic. This choice follows ~\eqref{choose_quadratic_penalty}, and ensures that the penalty is strong enough to discourage infeasible solutions. The maximum number of iterations was set to 100.

The QPIA algorithm is outlined as Algorithm \ref{alg: QPIA_algorithm}.

\begin{algorithm}[H]
    \caption{Quadratic Penalty Iterative Algorithm  (QPIA) for the ~\eqref{SkS_QUBO_formulation_one} relaxation}
    \label{alg: QPIA_algorithm}
    \begin{algorithmic}[1]
        \Require Graph $G = (V, E)$ with $|V| = n$, target cardinality $k \le n$, initial penalty parameter $\mu_{init}$
        \Ensure Subgraph $H \subseteq G$ with $|V(H)| = k$
        \State Initialize: $\mu \gets \mu_{init}$, $t \gets 0$
        \Repeat
            \State $x \approx \arg\min_{x \in \{0,1\}^n} \frac{1}{2} x^T A x + \frac{\mu}{2}(k - e^T x)^2$
            \Comment{Solved using a heuristic method}
            \If{$e^T x \ne k$}
                \State $\mu \gets \mu + 1$
            \Else
                \State \textbf{break}
            \EndIf
        \Until{
        $e^T x = k$
        }
        \State \Return Induced subgraph $H$ corresponding to $x$
    \end{algorithmic}
\end{algorithm}

\begin{landscape}
\begin{table}[]
    \centering
    \resizebox{\linewidth}{!}{%
\begin{tabular}{l|cccc|ccccc|ccccc}
\hline

\multirow{2}{*}{\textbf{Instance}} & 
\multirow{2}{*}{$\boldsymbol{k}$} & 
\multirow{2}{*}{\textbf{Optimum}} & 
\multirow{2}{*}{\textbf{Greedy Sol.}} & 
\multirow{2}{*}{$\boldsymbol{\mu_{init}}$} & 
\multicolumn{5}{c|}{\textbf{Simulated Annealing}} & 
\multicolumn{5}{c}{\textbf{D-Wave QPU}} \\
\cline{6-15}
& & & & &
$k_{first\_iter}$ & Iters. & $\mu$ & $best_{k_{reached}}$ & QPIA sol & $k_{first\_iter}$ & Iters. & $\mu$ & $best_{k_{reached}}$ & QPIA sol \\
\hline
kcluster40\_025\_20\_1 & 20 & 77 & \textbf{77} & 39 & 20 & 1 & 39 & 20 & 70 & 21 & 11 & 49 & 20 & 54 \\
kcluster40\_050\_20\_1 & 20 & 130 & 129 & 39 & 20 & 1 & 39 & 20 & 121 & 21 & 100 & 138 & 21 & - \\
kcluster40\_075\_20\_1 & 20 & 168 & 167 & 39 & 20 & 1 & 39 & 20 & 159 & 21 & 74 & 112 & 20 & 136 \\
kcluster80\_025\_40\_1 & 40 & 280 & 277 & 79 & 40 & 1 & 79 & 40 & 234 & 42 & 100 & 178 & 42 & - \\
kcluster80\_050\_40\_1 & 40 & 488 & 487 & 79 & 40 & 1 & 79 & 40 & 448 & 44 & 100 & 178 & 42 & - \\
kcluster80\_075\_40\_1 & 40 & 671 & 664 & 79 & 40 & 1 & 79 & 40 & 630 & 42 & 100 & 178 & 41 & - \\
kcluster100\_025\_50\_1 & 50 & 417 & 412 & 99 & 50 & 1 & 99 & 50 & 360 & 55 & 100 & 198 & 55 & - \\
kcluster100\_050\_50\_1 & 50 & 729 & 724 & 99 & 50 & 1 & 99 & 50 & 669 & 55 & 59 & 157 & 50 & 632 \\
kcluster100\_075\_50\_1 & 50 & 1029 & 1025 & 99 & 50 & 1 & 99 & 50 & 958 & 55 & 22 & 120 & 50 & 910 \\
kcluster120\_025\_60\_1 & 60 & 600 & 592 & 119 & 60 & 1 & 119 & 60 & 510 & - & - & - & - & - \\
kcluster120\_050\_60\_1 & 60 & 1060 & 1047 & 119 & 60 & 1 & 119 & 60 & 974 & - & - & - & - & - \\
kcluster120\_075\_60\_1 & 60 & 1478 & 1465 & 119 & 60 & 1 & 119 & 60 & 1384 & - & - & - & - & - \\
kcluster140\_025\_70\_1 & 70 & 806 & 798 & 139 & 70 & 1 & 139 & 70 & 684 & - & - & - & - & - \\
kcluster140\_050\_70\_1 & 70 & 1424 & 1420 & 139 & 70 & 1 & 139 & 70 & 1302 & - & - & - & - & - \\
kcluster140\_075\_70\_1 & 70 & 2004 & 1990 & 139 & 70 & 1 & 139 & 70 & 1892 & - & - & - & - & - \\
kcluster160\_025\_80\_1 & 80 & 1047 & 1038 & 159 & 80 & 1 & 159 & 80 & 894 & - & - & - & - & - \\
kcluster160\_050\_80\_1 & 80 & 1879 & 1874 & 159 & 80 & 1 & 159 & 80 & 1710 & - & - & - & - & - \\
kcluster160\_075\_80\_1 & 80 & 2592 & 2584 & 159 & 80 & 1 & 159 & 80 & 2453 & - & - & - & - & - \\
\hline
\end{tabular}
    }
    \caption{Results of the QPIA method on $ER$ graphs instances, where  SA and D-wave QPU were used in Step 3.}
    \label{tab: QPIA_ER_ins}
\end{table}

\begin{table}[]
    \centering
    \resizebox{\linewidth}{!}{%
\begin{tabular}{l|cccc|ccccc|ccccc}
\hline

\multirow{2}{*}{\textbf{Instance}} & 
\multirow{2}{*}{$\boldsymbol{k}$} & 
\multirow{2}{*}{\textbf{Optimum}} & 
\multirow{2}{*}{\textbf{Greedy Sol.}} & 
\multirow{2}{*}{$\boldsymbol{\mu_{init}}$} & 
\multicolumn{5}{c|}{\textbf{Simulated Annealing}} & 
\multicolumn{5}{c}{\textbf{D-Wave QPU}} \\
\cline{6-15}
& & & & &
$k_{first\_iter}$ & Iters. & $\mu$ & $best_{k_{reached}}$ & QPIA sol & $k_{first\_iter}$ & Iters. & $\mu$ & $best_{k_{reached}}$ & QPIA sol \\
\hline
$DW_{50}$ & 16 & 1 & 4 & 9 & 16 & 1 & 9 & 16 & 2 & 18 & 100 & 108 & 18 & - \\
$DW_{75}$ & 26 & 4 & 7 & 15 & 26 & 1 & 15 & 26 & 7 & 37 & 100 & 114 & 35 & - \\
$DW_{100}$ & 33 & 1 & 7 & 15 & 33 & 1 & 15 & 33 & 13 & 43 & 100 & 114 & 42 & - \\
$DW_{200}$ & 64 & 3 & 14 & 29 & 64 & 1 & 29 & 64 & 85 & - & - & - & - & - \\
$DW_{300}$ & 96 & 2 & 23 & 47 & 96 & 1 & 47 & 96 & 154 & - & - & - & - & - \\
$DW_{400}$ & 124 & 3 & 39 & 79 & 124 & 1 & 79 & 124 & 225 & - & - & - & - & - \\
$DW_{500}$ & 152 & 1 & 43 & 87 & 152 & 1 & 87 & 152 & 280 & - & - & - & - & - \\
\hline
\end{tabular}
    }
    \caption{Results of the QPIA method on $DW$ graphs instances, where  SA and D-wave QPU were used in Step 3.}
    \label{tab: QPIA_DW_ins}
\end{table}

\end{landscape}

Table~\ref{tab: QPIA_ER_ins} displays performance metrics of Algorithm \ref{alg: QPIA_algorithm} across selected  ER  instances. Similarly. Table \ref{tab: QPIA_DW_ins} shows the performance for  DW graphs. Alongside the known optimum and greedy baseline, the table lists key indicators such as the initial $\mu$, the solution size at the first iteration, the number of penalty updates, and the final solution quality. 

We note that for D-Wave QPU, since the quadratic penalty term introduces a fully connected interaction among all qubits, the resulting QUBO represents a dense (effectively complete) graph. This necessitates the use of clique embedding for mapping the problem to the Zephyr architecture. However, cliques of size greater than approximately 100 cannot be embedded due to hardware limitations. As a result, some large instances could not be solved using QPU, and their entries are left blank in the table.


For the  ER  cases, we observe that for some small instances (with $n \leq 100$), feasible solutions were still not obtained with the D-Wave QPU solver within the maximum iteration limit ($best_{k_{reached}}\neq k$). Also, we were not able to find feasible solutions for the  DW graphs, and the corresponding cells in the column QPIA are left empty.

On the other hand, the SA solver always returns a feasible solution within the first iteration, though the quality is often inferior to the greedy baseline. This may be attributed to the relatively high value of $\mu_{init}$, which makes the problem difficult to optimize heuristically.  

Note that in the  ER  cases, we solve the SkS on the complement of the original graph and subsequently complement the solution again to get the densest $k$-subgraph, as explained in \ref{sec:Data}. Consequently, here the higher value of the solution is considered better, unlike in the case of  DW instances, where we compute and report SkS.

In summary, this iterative algorithm performs very weakly.



\subsubsection{Lagrangian Relaxation Iterative Algorithm (LRIA)}

In this approach, we solve ~\eqref{SkS_problem_standard_formulation} by considering its Lagrangian relaxation ~\eqref{SkS_QUBO_Lagrangian_1}, where we have a linear penalty term with Lagrange multiplier $\lambda$ to enforce the cardinality constraint. 

The multiplier $\lambda$ is then updated iteratively based on the deviation of the computed solution size from the target cardinality $k$. The update follows a proportional adjustment using a fixed step size $\varphi$, which we set to $0.1$. 

Throughout the process, the algorithm tracks $x_{best}$, the solution with the fewest edges found so far that has a cardinality $k_{best}$ greater than or equal to the target $k$. If the target cardinality is not reached within the maximum iterations (which we set to 100), a greedy refinement step is applied to $x_{best}$: vertices with the highest degree within the subgraph are iteratively removed until the target cardinality $k$ is reached. The algorithm returns no solution if a subgraph with at least $k$ vertices is never found (i.e., if $k_{best} < k$). The full procedure is outlined in Algorithm \ref{alg: LRIA_algorithm}.

We initialize the Lagrange multiplier with $\lambda_{init} = \Delta(S_{\text{greedy}})$, where $\Delta(S_{\text{greedy}})$ denotes the maximum degree of the subgraph found using a greedy heuristic. This choice is motivated by the second statement  of  Proposition \ref{roman:obs}. 

\begin{algorithm}[H]
    \caption{Lagrangian Relaxation Iterative Algorithm for  ~\eqref{SkS_QUBO_Lagrangian_1} relaxation}
    \label{alg: LRIA_algorithm}
    \begin{algorithmic}[1]
        \Require Graph $G = (V, E)$ with $|V| = n$, target cardinality $k \le n$, $\lambda_{init}$, step size $\varphi$
        \Ensure Subgraph $H \subseteq G$ with $|V(H)| = k$
        \State Initialize: $\lambda \gets \lambda_{init}$, $t \gets 0$
        \Repeat
            \State $x \approx \arg\min_{x \in \{0,1\}^n} \frac{1}{2} x^T A x - \lambda e^T x$
            \Comment{Solved using a heuristic}
            \State $k_{\text{computed}} \gets e^T x$
            \If{($k_{\text{computed}} \ge k$ \textbf{and} $k_{\text{computed}} < k_{\text{best}}$)}
                \State Update best solution: $x_{\text{best}} \gets x$, $k_{\text{best}} \gets k_{\text{computed}}$
            \EndIf
            \If{$k_{\text{computed}} \ne k$}
                \State $\lambda \gets \lambda + \varphi \cdot (k - k_{\text{computed}})$
            \Else
                \State \textbf{break}
            \EndIf
        \Until{$k_{\text{computed}} = k$}
        \If{$k_{\text{best}} \ne k$}
            \State Apply greedy refinement to $x_{\text{best}}$ to obtain $x_{\text{greedy}}$ with $k$ elements
        \EndIf
        \State \Return Induced subgraph $H$ corresponding to $x_{\text{best}}$ or $x_{\text{greedy}}$
    \end{algorithmic}
\end{algorithm}

\begin{landscape}
\begin{table}[]
    \centering
    \resizebox{\linewidth}{!}{%
\begin{tabular}{l|cccc|ccccc|ccccc}
\hline
\multirow{2}{*}{\textbf{Instance}} & 
\multirow{2}{*}{$\boldsymbol{k}$} & 
\multirow{2}{*}{\textbf{Optimum}} & 
\multirow{2}{*}{\textbf{Greedy Sol.}} & 
\multirow{2}{*}{$\boldsymbol{\lambda_{init}}$} & 
\multicolumn{5}{c|}{\textbf{Simulated Annealing}} & 
\multicolumn{5}{c}{\textbf{D-Wave QPU}} \\
\cline{6-15}
& & & & &
$k_{first\_iter}$ & Iters. & $\lambda$ & $best_{k_{reached}}$ & LRIA sol & $k_{first\_iter}$ & Iters. & $\lambda$ & $best_{k_{reached}}$ & LRIA sol \\
\hline

kcluster40\_025\_20\_1 & 20 & 77 & \textbf{77} & 14 & 19 & 3 & 14.00 & 20 & \textbf{77} & 20 & 1 & 14.00 & 20 & 62 \\
kcluster40\_050\_20\_1 & 20 & 130 & 129 & 9 & 20 & 1 & 9.00 & 20 & \textbf{130} & 20 & 1 & 9.00 & 20 & \textbf{130} \\
kcluster40\_075\_20\_1 & 20 & 168 & 167 & 4 & 18 & 2 & 4.20 & 20 & \textbf{168} & 19 & 2 & 4.10 & 20 & \textbf{168} \\
kcluster80\_025\_40\_1 & 40 & 280 & 277 & 29 & 40 & 1 & 29.00 & 40 & \textbf{280} & 37 & 23 & 32.00 & 40 & 196 \\
kcluster80\_050\_40\_1 & 40 & 488 & 487 & 18 & 40 & 1 & 18.00 & 40 & \textbf{488} & 38 & 3 & 18.30 & 40 & 435 \\
kcluster80\_075\_40\_1 & 40 & 671 & 664 & 8 & 38 & 2 & 8.20 & 40 & \textbf{671} & 39 & 4 & 8.30 & 40 & 670 \\
kcluster100\_025\_50\_1 & 50 & 417 & 412 & 36 & 49 & 3 & 36.00 & 50 & \textbf{417} & 48 & 2 & 36.20 & 50 & 327 \\
kcluster100\_050\_50\_1 & 50 & 729 & 724 & 25 & 52 & 100 & 23.80 & 52 & \textbf{729} & 52 & 11 & 23.10 & 50 & 658 \\
kcluster100\_075\_50\_1 & 50 & 1029 & 1025 & 11 & 50 & 1 & 11.00 & 50 & \textbf{1029} & 51 & 2 & 10.90 & 50 & 1027 \\
kcluster120\_025\_60\_1 & 60 & 600 & 592 & 43 & 59 & 2 & 43.10 & 60 & \textbf{600} & 62 & 7 & 42.20 & 60 & 479 \\
kcluster120\_050\_60\_1 & 60 & 1060 & 1047 & 28 & 60 & 1 & 28.00 & 60 & \textbf{1060} & 58 & 5 & 28.70 & 60 & 967 \\
kcluster120\_075\_60\_1 & 60 & 1478 & 1465 & 13 & 57 & 10 & 14.10 & 60 & \textbf{1478} & 53 & 9 & 14.30 & 60 & 1428 \\
kcluster140\_025\_70\_1 & 70 & 806 & 798 & 50 & 69 & 2 & 50.10 & 70 & \textbf{806} & 69 & 4 & 50.10 & 70 & 617 \\
kcluster140\_050\_70\_1 & 70 & 1424 & 1420 & 33 & 70 & 1 & 33.00 & 70 & \textbf{1424} & 70 & 1 & 33.00 & 70 & 1258 \\
kcluster140\_075\_70\_1 & 70 & 2004 & 1990 & 16 & 70 & 1 & 16.00 & 70 & \textbf{2004} & 61 & 11 & 18.30 & 70 & 1904 \\
kcluster160\_025\_80\_1 & 80 & 1047 & 1038 & 56 & 78 & 16 & 58.00 & 80 & \textbf{1047} & - & 1 & 56.00 & - & - \\
kcluster160\_050\_80\_1 & 80 & 1879 & 1874 & 38 & 81 & 2 & 37.90 & 80 & \textbf{1879} & - & 1 & 38.00 & - & - \\
kcluster160\_075\_80\_1 & 80 & 2592 & 2584 & 17 & 76 & 100 & 18.30 & 82 & \textbf{2592} & 75 & 9 & 17.80 & 80 & 2470 \\

\hline
\end{tabular}
    }
    \caption{Results of the LRIA method on $ER$ graphs instances, where SA and D-wave QPU solvers are used in Step 3.}
    \label{tab: LRIA_ER_ins}
\end{table}
    
\begin{table}[]
    \centering
    \resizebox{\linewidth}{!}{%
\begin{tabular}{l|cccc|ccccc|ccccc}
\hline
\multirow{2}{*}{\textbf{Instance}} & 
\multirow{2}{*}{$\boldsymbol{k}$} & 
\multirow{2}{*}{\textbf{Optimum}} & 
\multirow{2}{*}{\textbf{Greedy Sol.}} & 
\multirow{2}{*}{$\boldsymbol{\lambda_{init}}$} & 
\multicolumn{5}{c|}{\textbf{Simulated Annealing}} & 
\multicolumn{5}{c}{\textbf{D-Wave QPU}} \\
\cline{6-15}
& & & & &
$k_{first\_iter}$ & Iters. & $\lambda$ & $best_{k_{reached}}$ & LRIA sol & $k_{first\_iter}$ & Iters. & $\lambda$ & $best_{k_{reached}}$ & LRIA sol \\
\hline
$DW_{50}$ & 16 & 1 & 4 & 1 & 15 & 9 & 1.00 & 16 & \textbf{1} & 24 & 98 & 0.60 & 16 & 4 \\
$DW_{75}$ & 26 & 4 & 7 & 1 & 24 & 100 & 1.30 & 28 & \textbf{4} & 34 & 100 & 0.50 & 27 & 8 \\
$DW_{100}$ & 33 & 1 & 7 & 1 & 34 & 3 & 1.00 & 33 & \textbf{1} & 46 & 100 & 0.90 & 34 & 7 \\
$DW_{200}$ & 64 & 3 & 14 & 1 & 62 & 27 & 1.00 & 64 & \textbf{3} & 83 & 100 & -0.20 & 71 & 16 \\
$DW_{300}$ & 96 & 2 & 23 & 1 & 95 & 65 & 1.00 & 96 & 6 & 126 & 100 & -0.10 & 100 & 22 \\
$DW_{400}$ & 124 & 3 & 39 & 1 & 120 & 100 & 10.60 & 147 & 17 & 162 & 100 & -2.40 & 162 & 29 \\
$DW_{500}$ & 152 & 1 & 43 & 1 & 154 & 100 & -4.20 & 154 & 14 & 194 & 100 & -5.50 & 194 & 43 \\
\hline
\end{tabular}
    }
    \caption{Results of the LRIA method on $DW$ graphs instances, where SA and D-wave QPU solvers are used in Step 3.}
    \label{tab: LRIA_DW_ins}
\end{table}

\end{landscape}

Table \ref{tab: LRIA_ER_ins} summarizes the results obtained by Algorithm \ref{alg: LRIA_algorithm} on benchmark instances using both heuristics - SA and QPU. For each instance, the table reports the known optimum, the greedy solution value, and the $\lambda_{init}$ we used. The table also shows the cardinality found in the first iteration ($k_{first\_iter}$), the number of iterations until termination, the final value of $\lambda$, the best cardinality reached ($best_{k_{reached}}$), and the final solution value returned by the algorithm.

For the  ER  graphs, we observe that in all reported cases,  the optimum solution was obtained by using the SA solver. In a couple of cases where $best_{k_{reached}}$ was not equal to target $k$, the algorithm returns the solution with the required cardinality by removing extra nodes from the best found subgraph greedily (in the complement graph, where we try to compute the sparsest $k$-subgraphs, we iteratively remove vertices with the largest degree in the induced subgraph).  Additionally, the initial and final values of $\lambda$ are very close to each other, if not the same. This informs us that the value of $\lambda_{init} = \Delta(S_{greedy})$ is a reasonable starting point.

We also observe that using the D-Wave QPU solver, we get a better solution by using the LRIA method compared to the earlier QPIA method. For two small instances, we even reach the optimum solution. Additionally, since in this approach, we don't have a quadratic penalty factor, we do not need to embed the graph using clique embedding. This allows us to embed bigger graphs on the QPU. In a few instances where table entries are missing, we were unable to find a suitable embedding.

Similarly, for the  DW graphs, the LRIA solution using the SA solver is better than the greedy baseline in all the cases. Moreover, the QPU solution is at par or better than the greedy baseline in 5 of the 7 cases, suggesting that for the problems which can be embedded directly on the QPU, the solution quality is better though not optimum.

Note that in the cases where the final value of $\lambda$ is negative, the solution is obtained by greedily removing nodes from the graph corresponding to $best_{k_{reached}}$, as a negative value of $\lambda$ corresponds to the solution with all zeros, i.e., the graph with no nodes and edges.

\subsubsection{Augmented Lagrangian Iterative Algorithm (ALIA)}

The Augmented Lagrangian Iterative Algorithm (ALIA) is designed to solve the relaxation ~\eqref{SkS_QUBO_formulation_two}, which augments the objective with both a Lagrangian term and a quadratic penalty.

We initialize the Lagrange multiplier as $\lambda_{init} = \Delta(S_{greedy})$, and set the initial penalty parameter, $\mu_{init} = 0.1$. At each iteration, we update $\lambda$ based on the constraint violation and slightly increase $\mu$ by a factor of $\rho = 1.1$. The process continues until the size constraint is satisfied or the iteration limit (set to 100) is reached.

\begin{algorithm}
    \caption{Augmented Lagrangian Iterative Algorithm for ~\eqref{SkS_problem_standard_formulation}}
    \label{alg: ALIA_algorithm}
    \begin{algorithmic}[1]
        \Require Value of $k$, initial $\lambda$, $\mu$, and increase factor $\rho > 1$
        \Ensure Final $\lambda$ and $\mu$ for use in the augmented Lagrangian function
        \Repeat
            \State $x \approx \arg \min_{x \in \{0, 1\}^n} \frac{1}{2}x^TAx + \lambda(k - e^Tx) + \frac{\mu}{2}(k - e^Tx)^2 $
            \Comment{Solved using a heuristic}
            \If{$k - e^Tx \neq 0$}
                \State $\lambda \gets \lambda + \mu (k - e^Tx)$
            \EndIf
            \State $\mu \gets \mu \rho$
        \Until{$k - e^Tx = 0$ or iteration limit reached}
        \State \Return Induced subgraph $H$ corresponding to $x$
    \end{algorithmic}
\end{algorithm}

The results of ALIA method for selected  ER  instances are summarized in Table \ref{tab: ALIA_ER_ins}. Each row corresponds to the specific instance and includes performance metrics under both solvers.

We observe that for all instances, the SA solver successfully returns a feasible and optimum solution. In most cases, the algorithm converges in a few iterations, indicating that the initial values of $\lambda$ and $\mu$ are reasonable and effective.

For D-Wave QPU, similar to QPIA, the presence of a non-zero quadratic penalty $\mu$ leads to a dense QUBO formulation, requiring clique embedding in the Zephyr architecture. Instances involving more than 100 variables cannot be embedded due to hardware limitations. Consequently, for large instances, the results are empty. In some cases, even for embeddable problem sizes, the algorithm fails to converge to a feasible solution within the iteration limit, resulting in missing objective values.

The results for  DW graphs are tabulated in Table \ref{tab: ALIA_DW_ins}. 
We observe that for these instances, we get a feasible solution using the SA solver. Similarly, we get a feasible solution for all the embeddable cases with the D-Wave QPU solver, unlike in the case of the QPIA.

\begin{landscape}
    \begin{table}[]
    \centering
    \resizebox{\linewidth}{!}{%
\begin{tabular}{l|cccc|cccccc|cccccc}
\hline
\multirow{2}{*}{\textbf{Instance}} & 
\multirow{2}{*}{$\boldsymbol{k}$} & 
\multirow{2}{*}{\textbf{Optimum}} & 
\multirow{2}{*}{\textbf{Greedy Sol.}} & 
\multirow{2}{*}{$\boldsymbol{\lambda_{init}}$} & 
\multicolumn{6}{c|}{\textbf{Simulated Annealing}} & 
\multicolumn{6}{c}{\textbf{D-Wave QPU}} \\
\cline{6-17}
& & & & &
$k_{first\_iter}$ & Iters. & $\lambda$ & $\mu$ & $best_{k_{reached}}$ & ALIA sol & $k_{first\_iter}$ & Iters. & $\lambda$ & $\mu$ & $best_{k_{reached}}$ &  ALIA sol \\
\hline
kcluster40\_025\_20\_1 & 20 & 77 & \textbf{77} & 14 & 20 & 1 & 14.00 & 0.10 & 20 & \textbf{77} & 19 & 2 & 14.10 & 0.11 & 20 & 57 \\
kcluster40\_050\_20\_1 & 20 & 130 & 129 & 9 & 20 & 1 & 9.00 & 0.10 & 20 & \textbf{130} & 19 & 2 & 9.10 & 0.11 & 20 & 124 \\
kcluster40\_075\_20\_1 & 20 & 168 & 167 & 4 & 20 & 1 & 4.00 & 0.10 & 20 & \textbf{168} & 20 & 1 & 4.00 & 0.10 & 20 & 167 \\
kcluster80\_025\_40\_1 & 40 & 280 & 277 & 29 & 40 & 1 & 29.00 & 0.10 & 40 & \textbf{280} & 43 & 27 & 4.51 & 1.19 & 40 & 179 \\
kcluster80\_050\_40\_1 & 40 & 488 & 487 & 18 & 40 & 1 & 18.00 & 0.10 & 40 & \textbf{488} & 44 & 26 & -2.87 & 1.08 & 40 & 393 \\
kcluster80\_075\_40\_1 & 40 & 671 & 664 & 8 & 40 & 1 & 8.00 & 0.10 & 40 & \textbf{671} & 42 & 9 & 5.68 & 0.21 & 40 & 604 \\
kcluster100\_025\_50\_1 & 50 & 417 & 412 & 36 & 50 & 1 & 36.00 & 0.10 & 50 & \textbf{417} & 55 & 100 & -9394.13 & 1252.78 & 53 & - \\
kcluster100\_050\_50\_1 & 50 & 729 & 724 & 25 & 52 & 5 & 24.07 & 0.15 & 50 & \textbf{729} & 48 & 10 & 27.50 & 0.24 & 50 & 636 \\
kcluster100\_075\_50\_1 & 50 & 1029 & 1025 & 11 & 50 & 1 & 11.00 & 0.10 & 50 & \textbf{1029} & 53 & 100 & -18655.95 & 1252.78 & 53 & - \\
kcluster120\_025\_60\_1 & 60 & 600 & 592 & 43 & 60 & 1 & 43.00 & 0.10 & 60 & \textbf{600} & - & - & - & - & - & - \\
kcluster120\_050\_60\_1 & 60 & 1060 & 1047 & 28 & 60 & 1 & 28.00 & 0.10 & 60 & \textbf{1060} & - & - & - & - & - & - \\
kcluster120\_075\_60\_1 & 60 & 1478 & 1465 & 13 & 59 & 8 & 13.95 & 0.19 & 60 & \textbf{1478} & - & - & - & - & - & - \\
kcluster140\_025\_70\_1 & 70 & 806 & 798 & 50 & 70 & 1 & 50.00 & 0.10 & 70 & \textbf{806} & - & - & - & - & - & - \\
kcluster140\_050\_70\_1 & 70 & 1424 & 1420 & 33 & 70 & 1 & 33.00 & 0.10 & 70 & \textbf{1424} & - & - & - & - & - & - \\
kcluster140\_075\_70\_1 & 70 & 2004 & 1990 & 16 & 70 & 1 & 16.00 & 0.10 & 70 & \textbf{2004} & - & - & - & - & - & - \\
kcluster160\_025\_80\_1 & 80 & 1047 & 1038 & 56 & 78 & 11 & 58.06 & 0.26 & 80 & \textbf{1047} & - & - & - & - & - & - \\
kcluster160\_050\_80\_1 & 80 & 1879 & 1874 & 38 & 80 & 1 & 38.00 & 0.10 & 80 & \textbf{1879} & - & - & - & - & - & - \\
kcluster160\_075\_80\_1 & 80 & 2592 & 2584 & 17 & 78 & 47 & 15.00 & 8.02 & 80 & \textbf{2592} & - & - & - & - & - & - \\
\hline
\end{tabular}
    }
    \caption{Results of the ALIA method on $ER$ graphs instances, where SA and D-wave QPU solvers are used in Step 2.}
    \label{tab: ALIA_ER_ins}
\end{table}

    \begin{table}[]
    \centering
    \resizebox{\linewidth}{!}{%
\begin{tabular}{l|cccc|cccccc|cccccc}
\hline
\multirow{2}{*}{\textbf{Instance}} & 
\multirow{2}{*}{$\boldsymbol{k}$} & 
\multirow{2}{*}{\textbf{Optimum}} & 
\multirow{2}{*}{\textbf{Greedy Sol.}} & 
\multirow{2}{*}{$\boldsymbol{\lambda_{init}}$} & 
\multicolumn{6}{c|}{\textbf{Simulated Annealing}} & 
\multicolumn{6}{c}{\textbf{D-Wave QPU}} \\
\cline{6-17}
& & & & &
$k_{first\_iter}$ & Iters. & $\lambda$ & $\mu$ & $best_{k_{reached}}$ & ALIA sol & $k_{first\_iter}$ & Iters. & $\lambda$ & $\mu$ & $best_{k_{reached}}$ &  ALIA sol \\
\hline
$DW_{50}$ & 16 & 1 & 4 & 1 & 16 & 1 & 1.00 & 0.10 & 16 & \textbf{1} & 19 & 4 & 0.24 & 0.13 & 16 & 4 \\
$DW_{75}$ & 26 & 4 & 7 & 1 & 25 & 32 & 0.50 & 1.92 & 26 & \textbf{4} & 24 & 2 & 1.20 & 0.11 & 26 & 8 \\
$DW_{100}$ & 33 & 1 & 7 & 1 & 33 & 1 & 1.00 & 0.10 & 33 & \textbf{1} & 30 & 7 & -0.05 & 0.18 & 33 & 52 \\
$DW_{200}$ & 64 & 3 & 14 & 1 & 64 & 1 & 1.00 & 0.10 & 64 & 5 & - & - & - & - & - & - \\
$DW_{300}$ & 96 & 2 & 23 & 1 & 96 & 1 & 1.00 & 0.10 & 96 & 5 & - & - & - & - & - & - \\
$DW_{400}$ & 124 & 3 & 39 & 1 & 124 & 1 & 1.00 & 0.10 & 124 & 13 & - & - & - & - & - & - \\
$DW_{500}$ & 152 & 1 & 43 & 1 & 152 & 1 & 1.00 & 0.10 & 152 & 10 & - & - & - & - & - & - \\
\hline
\end{tabular}
    }
    \caption{Results of the ALIA method on $DW$ graphs instances, where SA and D-wave QPU solvers are used in Step 2.}
    \label{tab: ALIA_DW_ins}
\end{table}

\end{landscape}

\subsection{Discussion}
Our numerical experiments provide a clear narrative on the practical challenges and trade-offs involved in solving ~\eqref{SkS_problem_standard_formulation} via  QUBO relaxations. This discussion synthesizes our findings, moving from the performance of exact solvers on small instances to the scalability of iterative heuristic algorithms on larger, more complex graphs.

Our first key finding comes from the comparative analysis of exact methods on  ER  graphs. Although the quadratic penalty relaxation ~\eqref{SkS_QUBO_formulation_one}, the Lagrangian relaxation ~\eqref{SkS_QUBO_Lagrangian_1}, and augmented Lagrangian relaxation ~\eqref{SkS_QUBO_formulation_two}  are all theoretically capable of finding optimal solutions, if we use penalty parameters according to theoretical results established in Section \ref{definitions}, 
they are not computationally equivalent. We are aware that our computational evaluation of exact methods is very limited due to large computational times; therefore, we will talk only about observations that may not be general.

First, the Lagrangian relaxation is limited to specific graph classes where the underlying theoretical assumptions hold ($A_k \le B_k$), making it unsuitable as a general-purpose approach. Moreover, this approach is handy if we know $A_k$ and $B_k$ in prior. Additionally, the efficiency of this approach is very sensitive to the Lagrangian parameter $\lambda$, so the Lagrangian relaxation as an exact approach has very limited practical value.

Second, when comparing the quadratic and augmented Lagrangian relaxations on ER graphs, the results in Fig. \ref{fig: Biqbin_ER} show that the latter is consistently more efficient. For instance, with graphs of size $n=40$, solving the quadratic relaxation required significantly higher computation time and an order of magnitude more Branch \& Bound (B\&B) nodes compared to the augmented Lagrangian relaxation. This performance gap becomes even more pronounced for larger graphs. For seven instances with $n=80$, we did not get results of ~\eqref{SkS_QUBO_formulation_one} within the given time limit of 30 minutes per instance, whereas we could solve ~\eqref{SkS_QUBO_formulation_two} on these instances efficiently. The number of B\&B nodes also supports this observation, as the Augmented Lagrangian relaxation produces a considerably easier search space for the solver.

Overall, based on preliminary computational analysis from Section \ref{sec:exact_solvers}, we conclude that for exact solvers, the Augmented Lagrangian relaxation performs best, offering robustness and computational efficiency compared to the quadratic penalty and the Lagrangian relaxations.
This may be due to the fact that the augmented Lagrangian separately increases the diagonal of the matrix $A$ in the objective function, which makes the resulting QUBO more appropriate for the exact B\&B solver. 

\begin{figure}[H]
   \centering
   \begin{subfigure}[b]{0.46\textwidth}
       \centering
       \includegraphics[width=1.1\linewidth]{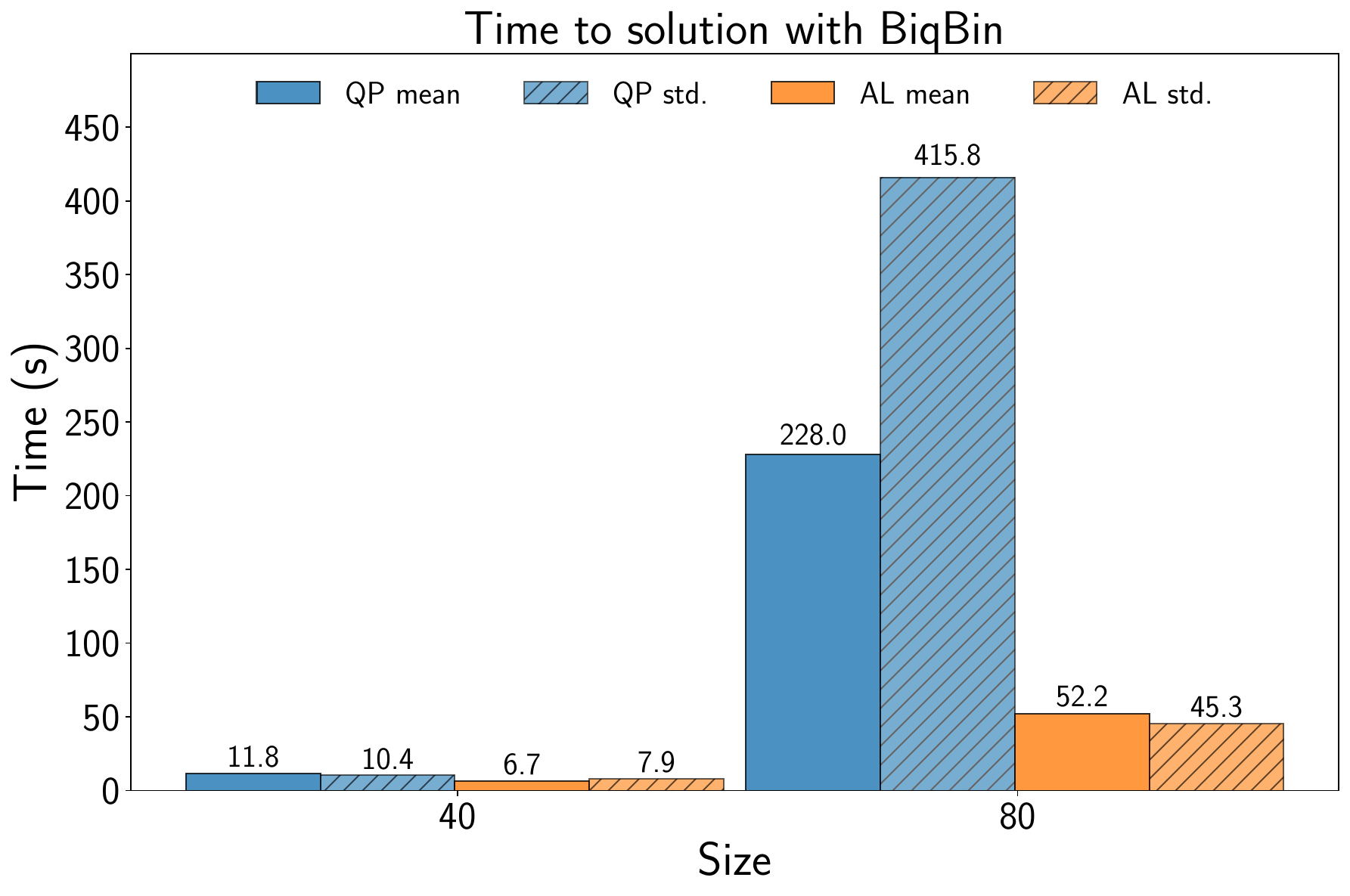}
   \end{subfigure}
   \qquad
   \begin{subfigure}[b]{0.46\textwidth}
       \centering
       \includegraphics[width=1.1\linewidth]{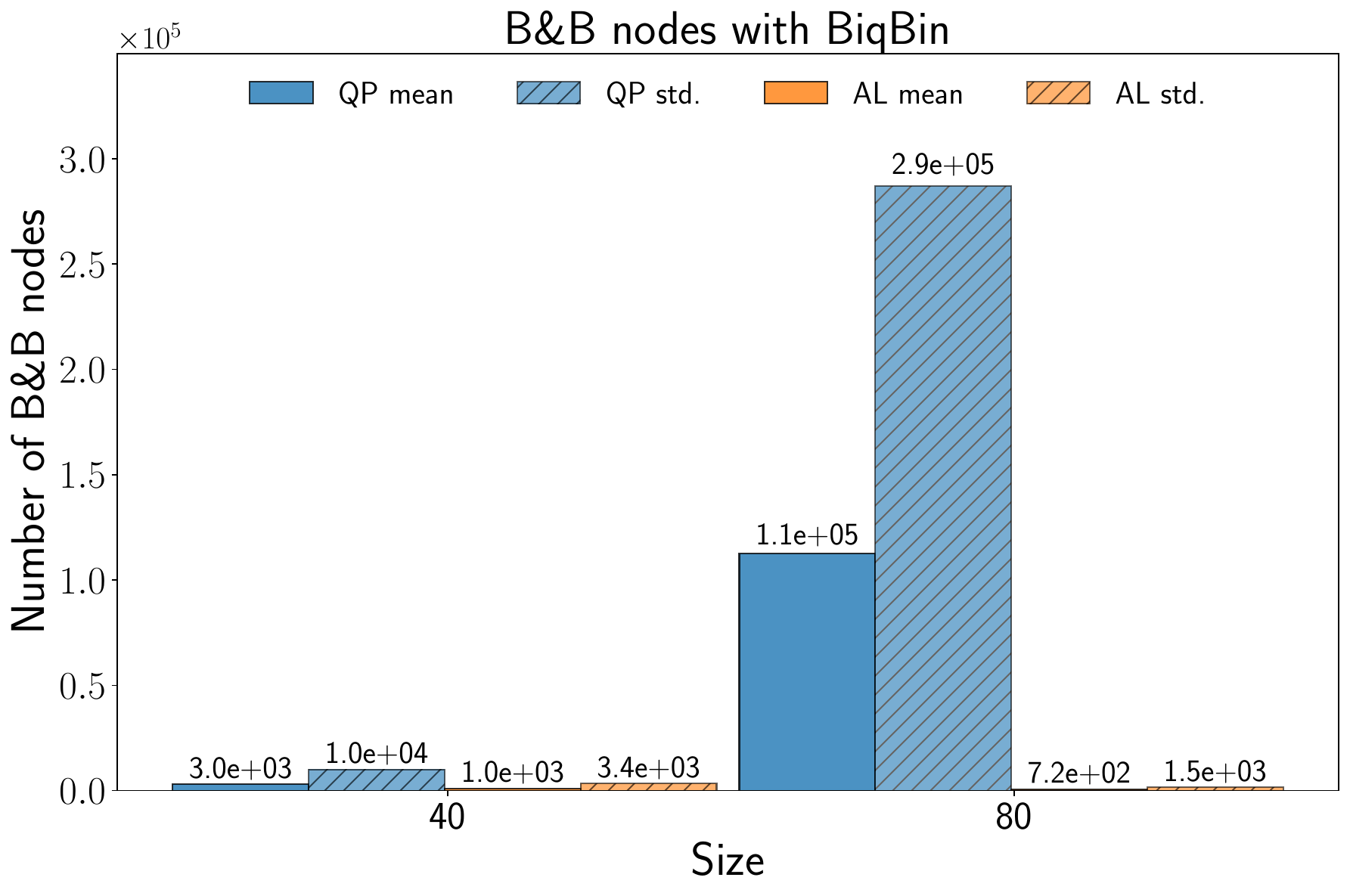}
   \end{subfigure}
   \caption{Performance summary of the QP and ALM on  ER  graphs with BiqBin solver.}
   \label{fig: Biqbin_ER}
\end{figure}

The iterative methods were compared on the full set of ER instances and on the  DW instances. Figures \ref{fig: ER_ins} and \ref{fig: DW_ins} depict their overall performance, separately for ER and DW graphs, and for SA and D-Wave QPU solvers.   

From Fig. \ref{fig: ER_ins}, the left-hand diagram shows that we were always able to solve all 270 instances when using the SA solver in all three iterative approaches and that the computed solutions were always feasible, but not always optimal. The best performing method is ALIA, which provided optimal solutions in 269 instances, followed by LRIA, which provided 264 optimal solutions, while QPIA computed optimal solutions in only 5 instances.
We also show the number of instances where greedy post-processing was used. We note that it was only used for the LRIA method, where it was required for 30 instances among all the feasible solutions and led to optimal solutions in 24 instances. The dashed horizontal line indicates the optimal solution obtained by the greedy method only.

The right-hand side of Fig. \ref{fig: ER_ins} shows that when using the D-Wave QPU solver on the ER graphs, we could not obtain an embedding to the D-Wave system 
for all ER instances. The best performing method on the ER instances is LRIA, as we could do embeddings for 234 out of 270 instances, and for all of them, the D-Wave QPU solver returned feasible solutions, but only 28  of them were optimal. Greedy post-processing was performed for one instance only, yielding an optimal solution. QPIA and ALIA required clique embeddings, which are more demanding, so only 135 instances could be embedded. QPIA computed feasible solutions for 32 embedded instances, none of which were optimal, while ALIA returned 93 feasible instances, 9 of which were optimal. 

Fig. \ref{fig: DW_ins} depicts the performance of the iterative methods on DW graphs, separately for the SA solver (left-hand diagram) and the D-Wave QPU solver (right-hand diagram).
When the iterative methods use the SA solver, the performance is better. As with the ER instances, the performance of QPIA is very weak - no optimal solution was computed. The LRIA method returned feasible solutions for all 7 instances, but in three cases, greedy post-processing was needed. Optimal solutions were found for 4 instances, one of them using greedy post-processing.
ALIA returned feasible solutions for all instances, and optimal solutions for 3 of them.

For the D-Wave QPU, we did not obtain optimal solutions with any iterative method. QPIA shows a very weak performance - it could only embed 3 instances, of which no feasible solution was found. LRIA found feasible solutions for all instances, but greedy post-processing was required in 6 instances. ALIA performed slightly better than QPIA: it was able to embed 3 instances and also obtained feasible solutions for them, but no optimal solution.

The box plots on Fig. \ref{fig: ErrorBox_ER} complement Fig.~\ref{fig: ER_ins}. They show the distributions of errors of the feasible solutions obtained by the iterative methods on  ER  instances using SA and D-Wave QPU solvers. For LRIA, we depict the errors of feasible solutions after greedy postprocessing. We write the number of feasible solutions obtained by each solver below the labels of the iterative methods.

We may conclude that the QPIA method has the worst performance, regardless of the solver it uses: with the SA solver, it computes feasible solutions for all instances but with the highest errors, with the D-Wave QPU solver, it computes feasible solutions for only 32 instances, and these also have high errors (the median of the errors is the highest).
 The LRIA method performs slightly better when using the D-Wave QPU, as it computes feasible solutions for 234 out of 270 instances, and all key quantiles for errors are smaller compared to ALIA.
 The ALIA method performs best with the SA solver.
 The main reason for the better performance of the LRIA method with D-Wave QPU is the limitation in embedding, since ALIA and QPIA require clique embedding, while LRIA performs normal embedding based only on the underlying graph, which should be easier since DW graphs are subgraphs of the D-Wave system topology.

\begin{figure}[H]
    \centering
    \begin{subfigure}[b]{0.46\textwidth}
        \centering
        \includegraphics[width=1.1\linewidth]{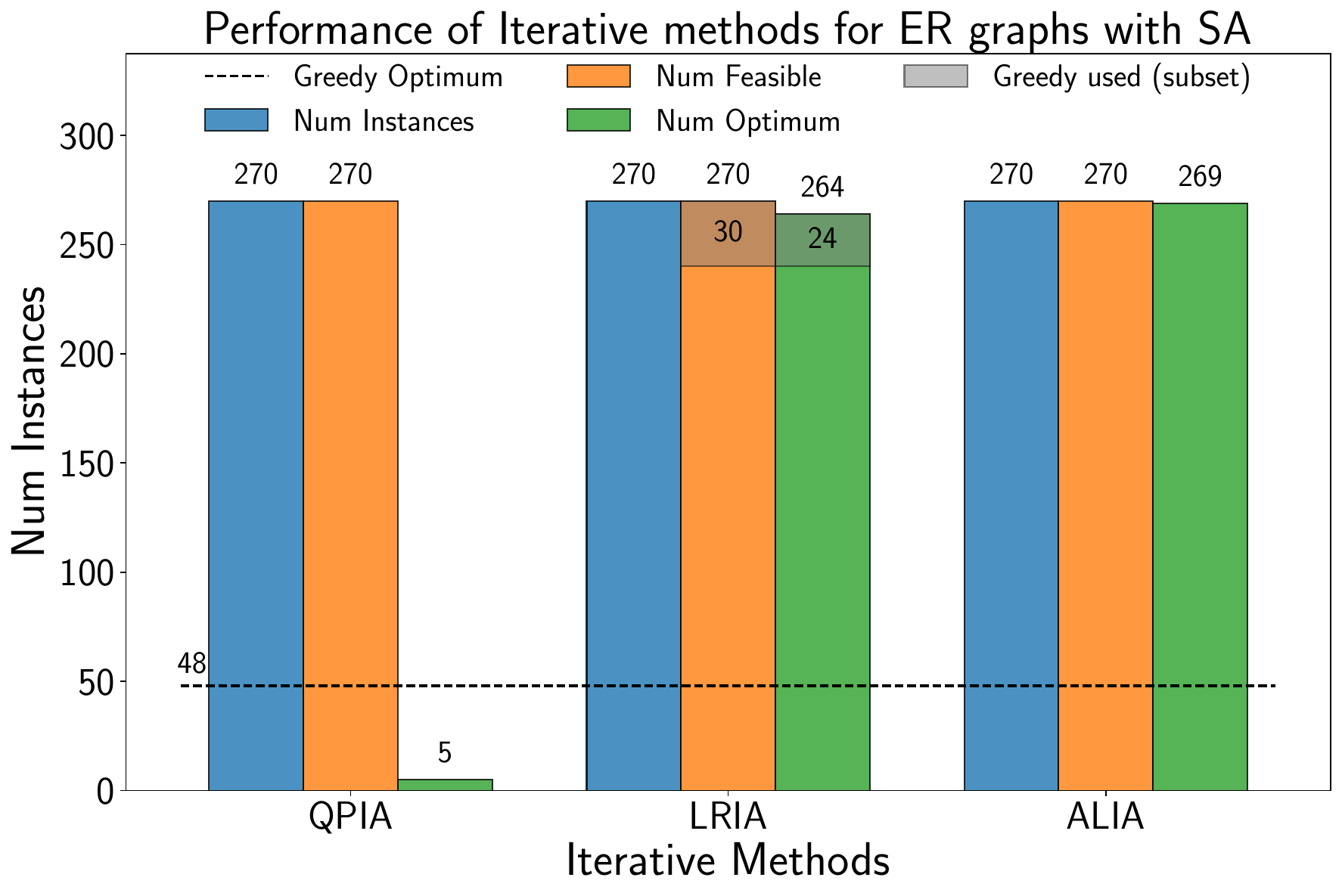}
    \end{subfigure}
    \qquad
    \begin{subfigure}[b]{0.46\textwidth}
        \centering
        \includegraphics[width=1.1\linewidth]{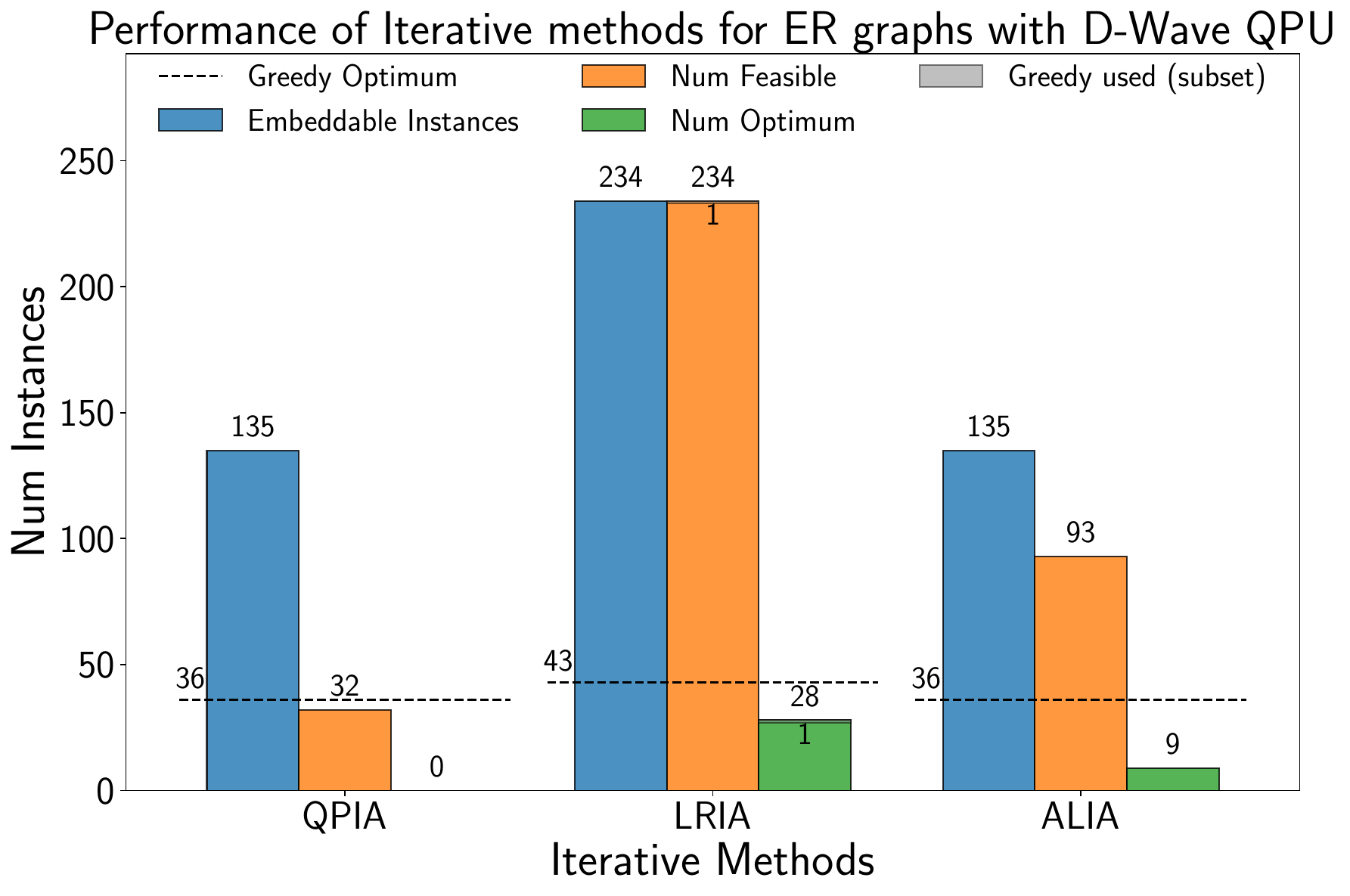}
    \end{subfigure}
    \caption{Performance summary of the iterative methods on  ER  graphs.}
    \label{fig: ER_ins}
\end{figure}

\begin{figure}[H]
    \centering
    \begin{subfigure}[b]{0.46\linewidth}
        \centering
        \includegraphics[width=1.1\linewidth]{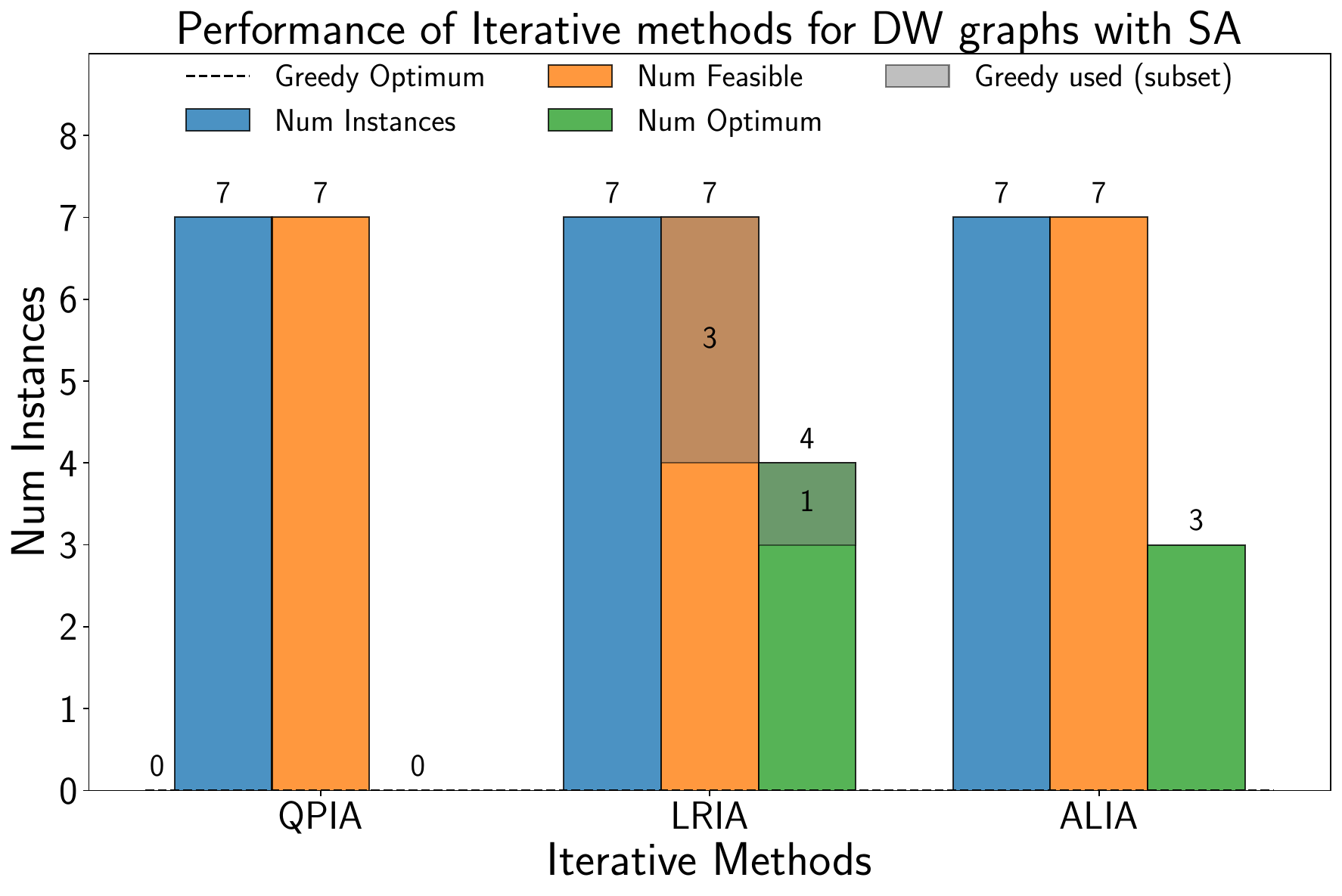}
    \end{subfigure}
    \qquad
    \begin{subfigure}[b]{0.46\linewidth}
        \centering
        \includegraphics[width=1.1\linewidth]{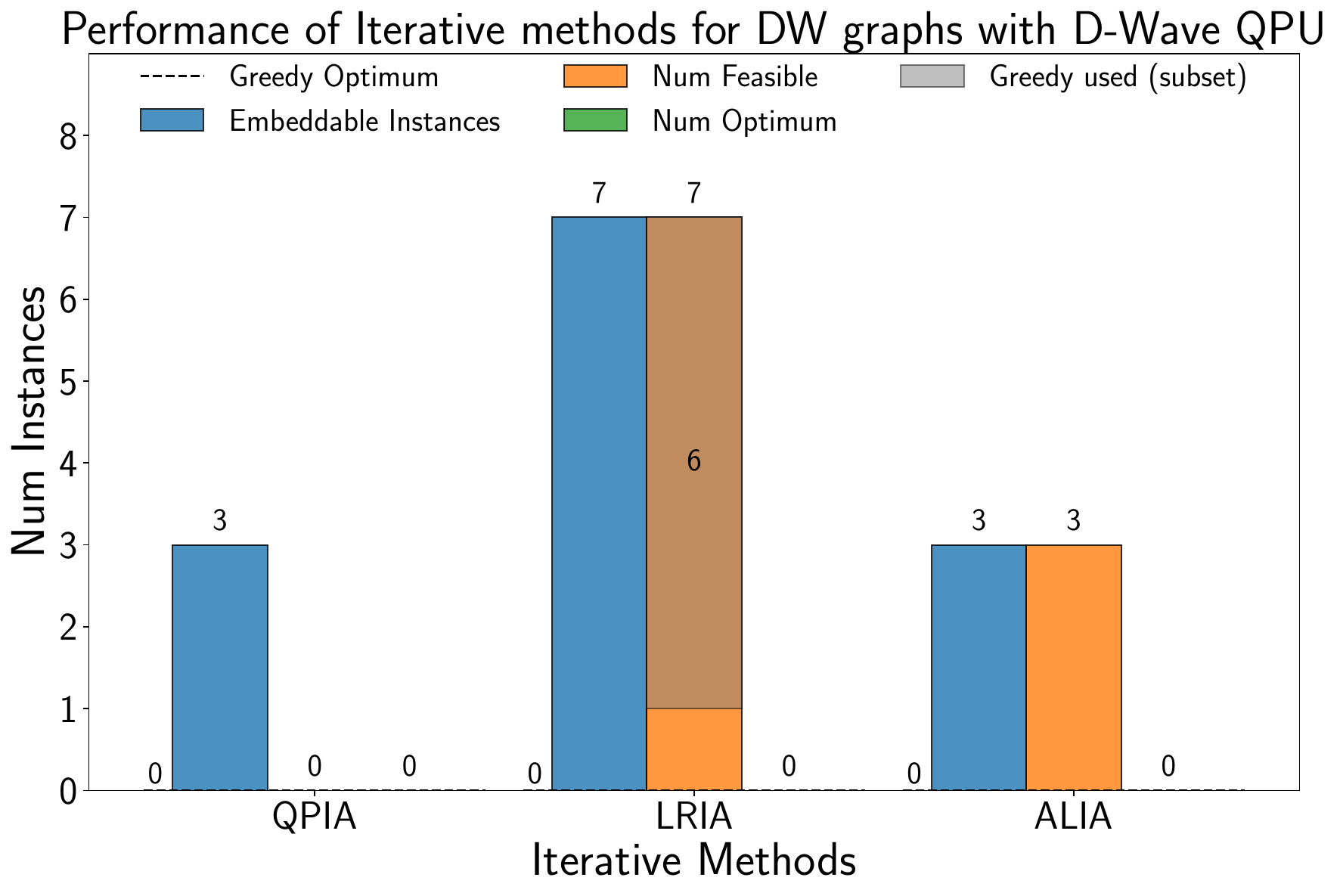}
    \end{subfigure}
    \caption{Performance summary of the iterative methods on  DW graphs.}
    \label{fig: DW_ins}
\end{figure}

\begin{figure}[H]
    \centering
    \includegraphics[width=0.7\linewidth]{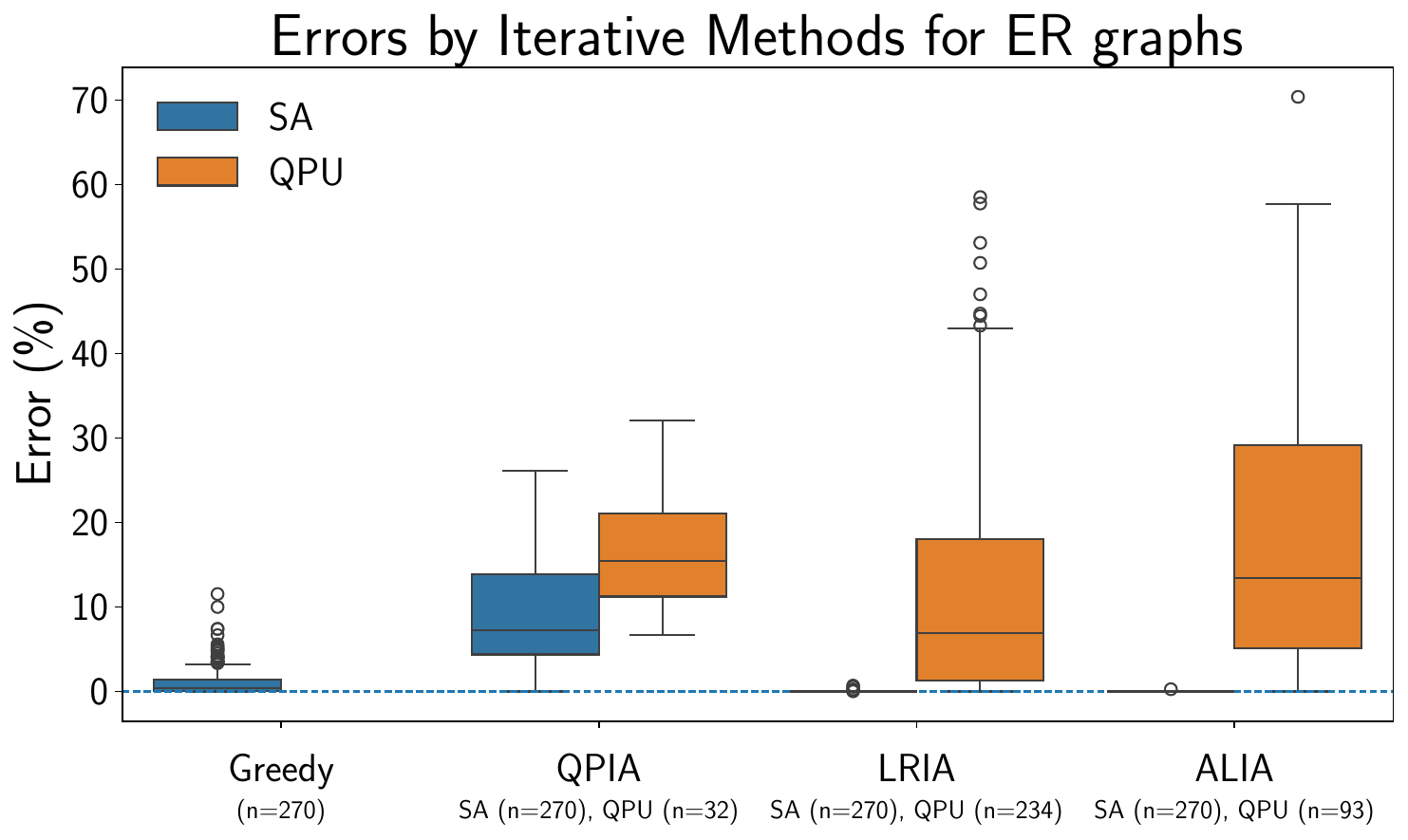}
    \caption{Box-plots for distribution of errors with  iterative methods on  ER  instances  }
    \label{fig: ErrorBox_ER}
\end{figure}

\section{Conclusions}

In this paper, we presented a comprehensive theoretical and computational investigation into solving the sparsest $k$-subgraph (SkS) problem using Quadratic Unconstrained Binary Optimization (QUBO) relaxations. We systematically analyzed three distinct approaches:    quadratic penalty  relaxation (QP),   Lagrangian relaxation (LR), and augmented Lagrangian (AL) relaxation.
Our primary contributions are both theoretical and practical. On the theoretical side, we established rigorous bounds and conditions for the quadratic penalty and Lagrangian parameters ($\mu$ and $\lambda$) that guarantee the exactness of the QUBO relaxations. We also proved that for graphs where the sequence of minimum edge differences $\{\diff_{\ell}\}$ is monotonically increasing, only the Lagrangian multiplier, $\lambda$, is enough to achieve the optimum solution. 

Our computational experiments on three sets of instances (Erd\H{o}s--R\'enyi (ER) graphs, Bipartite graphs, and D-Wave topology graphs) provided clear insights into the practical performance of these methods.
When using BiqBin, an exact QUBO solver, the AL relaxation method was demonstrably superior on the instances that we considered. It consistently solved problem instances with significantly less computational effort, as measured by both time and the number of branch and bound nodes, compared to QP approach, making it the most effective formulation for exact methods.
The LR relaxation is useful only if the underlying graph has a specific property that $\{\diff_{\ell}\}$ is monotonically increasing, which is rare and hard to verify.

When using heuristic solvers for QUBO relaxations, we have to switch to iterative algorithms. We have developed and implemented three iterative algorithms: Quadratic Penalty Iterative Algorithm (QPIA), Lagrangian Relaxation Iterative Algorithm (LRIA) and Augmented Lagrangian Iterative Algorithm (ALIA). They dynamically adjust the quadratic penalty and Lagrangian parameters and use two heuristic solvers (SA and the D-Wave Quantum Annealer) in each iteration. They could solve SkS for larger instances.
We observed a clear difference between the classical Simulated Annealing (SA) solver and the D-Wave QPU.
The QPIA method has the worst performance, regardless of the solver used.
The LRIA method performs slightly better than ALIA when using the D-Wave QPU solver, while the ALIA method with the SA solver performs best and provides optimal solutions for almost all ER instances.
We find that the linear penalty, which is a key feature of LRIA, preserves the sparsity of the problem and thus avoids the costly clique embeddings required when using quadratic penalty terms, so LRIA has the potential to solve larger problems on the quantum annealing hardware.

This work highlights an important lesson for the application of heuristic solvers to constrained optimization problems: the choice of problem formulation is not merely a theoretical exercise, but a crucial factor that has a direct impact on practical performance. This is even more important in the case of the quantum annealer, where hardware connectivity is a limitation.

Despite these findings, several interesting avenues for future research remain open. It would be highly valuable to characterise the classes of graphs for which the $\{\diff_{\ell}\}$ sequence is monotonically increasing, as this would extend the applicability of the Lagrangian relaxation method. A natural next step is to investigate whether our theoretical results and algorithmic approaches can be generalised to other cardinality-constrained optimization problems. Finally, there is significant potential in developing more sophisticated post-processing and graph partitioning techniques, similar to those used for the maximum stable set problem as in \cite{Krpan2024}, to further improve the solution quality and scale to even larger instances.

In our future work, we will also consider how to extend the methods from this paper to quadratic optimization problems with binary variables that have more linear constraints and also some quadratic constraints.

\section*{Aknowledgements}
The authors would like to thank Dr. Jaka Vodeb for conducting the experiments with the D-Wave quantum annealer, and Beno Zupanc for his assistance in carrying out the computational experiments with the BiqBin solver. The authors also thank Miha Rade\v{z} for fruitful discussions.

\section*{Funding} The research work of the first three authors was partially supported by the project Quantum Solver for Hard Binary Quadratic Problems (QBIQ), ID J7-50186,  funded by the Slovenian Research and Innovation Agency (ARIS), the project QEC4QEA, co-funded by EuroHPC JU and the Republic of Slovenia, the Ministry of Higher Education, the project DIGITOP, co-funded by the Republic of Slovenia, the Ministry of Higher Education, 
Science and Innovation, ARIS, and the European Union – 
NextGenerationEU, and by ARIS through the 
Annual work program of Rudolfovo. The fourth author was funded in part  by the Austrian Science Fund (FWF) [10.55776/DOC78]. For the purposes of open access, the authors have applied a CC BY public copyright license to all author-accepted manuscript versions resulting from this submission.

\bibliographystyle{elsarticle-num} 
\bibliography{Ref.bib}

\begin{thebibliography}{10}
\expandafter\ifx\csname url\endcsname\relax
  \def\url#1{\texttt{#1}}\fi
\expandafter\ifx\csname urlprefix\endcsname\relax\def\urlprefix{URL }\fi
\expandafter\ifx\csname href\endcsname\relax
  \def\href#1#2{#2} \def\path#1{#1}\fi

\bibitem{watrigant}
R.~Watrigant, M.~Bougeret, R.~Giroudeau,
  \href{https://hal-lirmm.ccsd.cnrs.fr/lirmm-00735713}{{The $k$-Sparsest
  Subgraph Problem}}, Tech. Rep. RR-12019 (2012).
\newline\urlprefix\url{https://hal-lirmm.ccsd.cnrs.fr/lirmm-00735713}

\bibitem{Lanciano2024}
T.~Lanciano, A.~Miyauchi, A.~Fazzone, F.~Bonchi, A survey on the densest
  subgraph problem and its variants, ACM Computing Surveys 56 (2024).
\newblock \href {https://doi.org/10.1145/3653298} {\path{doi:10.1145/3653298}}.

\bibitem{Djidjev2023}
H.~N. Djidjev,
  \href{https://onlinelibrary.wiley.com/doi/abs/10.1002/qute.202300104}{Quantum
  annealing with inequality constraints: The set cover problem}, Advanced
  Quantum Technologies 6~(11) (2023) 2300104.
\newblock \href
  {http://arxiv.org/abs/https://onlinelibrary.wiley.com/doi/pdf/10.1002/qute.202300104}
  {\path{arXiv:https://onlinelibrary.wiley.com/doi/pdf/10.1002/qute.202300104}},
  \href {https://doi.org/https://doi.org/10.1002/qute.202300104}
  {\path{doi:https://doi.org/10.1002/qute.202300104}}.
\newline\urlprefix\url{https://onlinelibrary.wiley.com/doi/abs/10.1002/qute.202300104}

\bibitem{Ronah}
P.~Ronagh, E.~Iranmanesh, B.~Woods, Method and system for solving the
  lagrangian dual of a constrained binary quadratic programming problem using a
  quantum annealer, US patent 11,989,256 B2 (2024).

\bibitem{Glover2019}
F.~Glover, G.~Kochenberger, Y.~Du, Quantum bridge analytics. {I}: {A} tutorial
  on formulating and using {QUBO} models, 4OR 17~(4) (2019) 335--371.
\newblock \href {https://doi.org/10.1007/s10288-019-00424-y}
  {\path{doi:10.1007/s10288-019-00424-y}}.

\bibitem{Djidjev2023_2}
H.~N. Djidjev, \href{https://dx.doi.org/10.1088/2058-9565/acd13e}{Logical qubit
  implementation for quantum annealing: augmented lagrangian approach}, Quantum
  Science and Technology 8~(3) (2023) 035013.
\newblock \href {https://doi.org/10.1088/2058-9565/acd13e}
  {\path{doi:10.1088/2058-9565/acd13e}}.
\newline\urlprefix\url{https://dx.doi.org/10.1088/2058-9565/acd13e}

\bibitem{Ahuja1993}
R.~K. Ahuja, T.~L. Magnanti, J.~B. Orlin, Network flows. {Theory}, algorithms,
  and applications., Englewood Cliffs, NJ: Prentice Hall, 1993.

\bibitem{Fisher2004}
M.~L. Fisher, \href{https://api.semanticscholar.org/CorpusID:1227951}{The
  {L}agrangian {R}elaxation {M}ethod for {S}olving {I}nteger {P}rogramming
  {P}roblems}, Management Sciences 50 (2004) 1861--1871.
\newline\urlprefix\url{https://api.semanticscholar.org/CorpusID:1227951}

\bibitem{Garcia2022}
M.~D. Garc{\'i}a, M.~Ayodele, A.~Moraglio,
  \href{https://api.semanticscholar.org/CorpusID:250645036}{Exact and
  sequential penalty weights in quadratic unconstrained binary optimisation
  with a digital annealer}, Proceedings of the Genetic and Evolutionary
  Computation Conference Companion (2022).
\newline\urlprefix\url{https://api.semanticscholar.org/CorpusID:250645036}

\bibitem{Quintero2020}
R.~A. Quintero, L.~F. Zuluaga,
  \href{https://doi.org/10.1007/978-3-030-54621-2_853-1}{QUBO Formulations of
  Combinatorial Optimization Problems for Quantum Computing Devices}, Springer
  International Publishing, Cham, 2020, pp. 1--13.
\newblock \href {https://doi.org/10.1007/978-3-030-54621-2_853-1}
  {\path{doi:10.1007/978-3-030-54621-2_853-1}}.
\newline\urlprefix\url{https://doi.org/10.1007/978-3-030-54621-2_853-1}

\bibitem{Calude2020}
C.~S. Calude, M.~J. Dinneen, R.~Hua, Quantum solutions for densest
  {$k$}-subgraph problems, Journal of Membrane Computing 2~(1) (2020) 26--41.
\newblock \href {https://doi.org/10.1007/s41965-019-00030-1}
  {\path{doi:10.1007/s41965-019-00030-1}}.

\bibitem{gusmeroli2022biqbin}
N.~Gusmeroli, T.~Hrga, B.~Lu{\v{z}}ar, J.~Povh, M.~Siebenhofer, A.~Wiegele,
  Biqbin: a parallel branch-and-bound solver for binary quadratic problems with
  linear constraints, ACM Transactions on Mathematical Software (TOMS) 48~(2)
  (2022) 1--31.

\bibitem{garey1979computers}
M.~R. Garey, D.~S. Johnson, Computers and Intractability: A Guide to the Theory
  of NP-Completeness, W. H. Freeman, San Francisco, 1979.

\bibitem{Papadimitriou1982}
C.~Papadimitriou, K.~Steiglitz, Combinatorial Optimization: Algorithms and
  Complexity, Vol.~32, 1982.
\newblock \href {https://doi.org/10.1109/TASSP.1984.1164450}
  {\path{doi:10.1109/TASSP.1984.1164450}}.

\bibitem{Ayodele2022}
M.~Ayodele, Penalty weights in {QUBO} formulations: permutation problems, in:
  Evolutionary computation in combinatorial optimization. 22nd European
  conference, EvoCOP 2022, held as part of EvoStar 2022, Madrid, Spain, April
  20--22, 2022. Proceedings, Cham: Springer, 2022, pp. 159--174.
\newblock \href {https://doi.org/10.1007/978-3-031-04148-8_11}
  {\path{doi:10.1007/978-3-031-04148-8_11}}.

\bibitem{Verma2022}
A.~Verma, M.~Lewis, Penalty and partitioning techniques to improve performance
  of {QUBO} solvers, Discrete Optimization 44 (2022) 10, id/No 100594.
\newblock \href {https://doi.org/10.1016/j.disopt.2020.100594}
  {\path{doi:10.1016/j.disopt.2020.100594}}.

\bibitem{Bertsekas}
D.~P. Bertsekas, Constrained optimization and {Lagrange} multiplier methods,
  Computer {Science} and {Applied} {Mathematics}. {New} {York} - {London} etc.:
  {Academic} {Press} (1982).

\bibitem{Fernandez2012}
D.~Fern\'{a}ndez, M.~V. Solodov, \href{https://doi.org/10.1137/10081085X}{Local
  {C}onvergence of {E}xact and {I}nexact {A}ugmented {L}agrangian {M}ethods
  under the {S}econd-{O}rder {S}ufficient {O}ptimality {C}ondition}, SIAM
  Journal on Optimization 22~(2) (2012) 384--407.
\newblock \href {http://arxiv.org/abs/https://doi.org/10.1137/10081085X}
  {\path{arXiv:https://doi.org/10.1137/10081085X}}, \href
  {https://doi.org/10.1137/10081085X} {\path{doi:10.1137/10081085X}}.
\newline\urlprefix\url{https://doi.org/10.1137/10081085X}

\bibitem{PuchRen2023}
D.~Pucher, F.~Rendl, Stable-set and coloring bounds based on 0-1 quadratic
  optimization, Applied Set-Valued Analysis and Optimization 5~(2) (2023)
  233--251.
\newblock \href {https://doi.org/10.23952/asvao.5.2023.2.08}
  {\path{doi:10.23952/asvao.5.2023.2.08}}.

\bibitem{Krpan2024}
A.~Krpan, J.~Povh, D.~Pucher, Quantum computing and the stable set problem,
  Optimization Methods and Software (2025) 1--34\href
  {https://doi.org/10.1080/10556788.2025.2490639}
  {\path{doi:10.1080/10556788.2025.2490639}}.

\bibitem{Github_Data}
O.~Bihani, R.~Kužel, J.~Povh, D.~Pucher,
  \href{https://doi.org/10.5281/zenodo.17086044}{Sparsest $k$-subgraph} (Sep.
  2025).
\newblock \href {https://doi.org/10.5281/zenodo.17086044}
  {\path{doi:10.5281/zenodo.17086044}}.
\newline\urlprefix\url{https://doi.org/10.5281/zenodo.17086044}

\bibitem{Billionnet2005}
A.~Billionnet, Different formulations for solving the heaviest ${K}$-subgraph
  problem, INFOR: Information Systems and Operational Research 43~(3) (2005)
  171--186.
\newblock \href {https://doi.org/10.1080/03155986.2005.11732724}
  {\path{doi:10.1080/03155986.2005.11732724}}.

\bibitem{Billionnet2009}
A.~Billionnet, S.~Elloumi, M.-C. Plateau,
  \href{https://eudml.org/doc/105392}{Improving the performance of standard
  solvers for quadratic 0-1 programs by a tight convex reformulation: {The}
  {QCR} method}, Discrete Applied Mathematics 157~(6) (2009) 1185--1197.
\newblock \href {https://doi.org/10.1016/j.dam.2007.12.007}
  {\path{doi:10.1016/j.dam.2007.12.007}}.
\newline\urlprefix\url{https://eudml.org/doc/105392}

\bibitem{Sotirov2020}
R.~Sotirov, On solving the densest {$k$}-subgraph problem on large graphs,
  Optimization Methods \& Software 35~(6) (2020) 1160--1178.
\newblock \href {https://doi.org/10.1080/10556788.2019.1595620}
  {\path{doi:10.1080/10556788.2019.1595620}}.

\bibitem{SCIP}
S.~Maher, M.~Miltenberger, J.~P. Pedroso, D.~Rehfeldt, R.~Schwarz, F.~Serrano,
  {PySCIPOpt}: Mathematical programming in python with the {SCIP} optimization
  suite, in: Mathematical Software {\textendash} {ICMS} 2016, Springer
  International Publishing, 2016, pp. 301--307.
\newblock \href {https://doi.org/10.1007/978-3-319-42432-3_37}
  {\path{doi:10.1007/978-3-319-42432-3_37}}.

\end{thebibliography}

\end{document}